\tikzstyle{shaded}=[fill=red!10!blue!20!gray!30!white]
\tikzstyle{shaded line}=[double=red!10!blue!20!gray!30!white, double distance=1.5mm, draw=black]
\tikzstyle{unshaded}=[fill=white]
\tikzstyle{unshaded line}=[double=white, double distance=1.5mm, draw=black]
\tikzstyle{Tbox}=[circle, draw, thick, fill=white, opaque,]
\tikzstyle{empty box}=[circle, draw, thick, fill=white, opaque, inner sep=2mm]
\tikzstyle{background rectangle}= [fill=red!10!blue!20!gray!40!white,rounded corners=2mm] 
\tikzstyle{on}=[very thick, red!50!blue!50!black]
\tikzstyle{off}=[gray]
\tikzstyle{traces}=[scale=.2, inner sep=1mm]
\tikzstyle{quadratic}=[scale=.4, inner sep=1mm, baseline]
\tikzstyle{annular}=[scale=.7, inner sep=1mm, baseline]
\tikzstyle{make triple edge size}= [scale=.4, inner sep=1mm,baseline] 
\tikzstyle{icosahedron network}=[scale=.3, inner sep=1mm, baseline]
\tikzstyle{ATLsix}=[scale=.25, baseline]
\tikzstyle{TL12}=[scale=.15,baseline]
\tikzstyle{PAdefn}=[scale=.7,baseline]
\tikzstyle{TLEG}=[scale=.5,baseline]
\newtheorem{lemma}{Lemma}[section]
\newtheorem{definition}[lemma]{Definition}
\newtheorem{theorem}[lemma]{Theorem}
\newtheorem{proposition}[lemma]{Proposition}
\newtheorem{remark}[lemma]{Remark}
\newtheorem{corollary}[lemma]{Corollary}
\newtheorem{conjecture}[lemma]{Conjecture}
\newtheorem{examples}[lemma]{Examples}
\newtheorem{problem}[lemma]{Problem}
\newtheorem{question}[lemma]{Question}
\newtheorem{notations}[lemma]{Notations}
\newenvironment{claim}[1]{\par\noindent\underline{Claim:}\space#1}{}
\newenvironment{claimproof}[1]{\par\noindent\underline{Proof:}\space#1}{\hfill $ \blacksquare $ }
 \title[Ore's theorem on cyclic subfactor planar algebras]{Ore's theorem on cyclic subfactor planar algebras and beyond}
 \author[Sebastien Palcoux]{Sebastien Palcoux}
\address{Institute of Mathematical Sciences, Chennai, India}
\email{sebastienpalcoux@gmail.com}
\keywords{von Neumann algebra; subfactor; planar algebra; biprojection; distributive lattice; finite group; representation}
\begin{document}
\begin{abstract}
Ore proved that a finite group is cyclic if and only if its subgroup lattice is distributive. Now, since every subgroup of a cyclic group is normal, we call a subfactor planar algebra cyclic if all its biprojections are normal and form a distributive lattice. The main result generalizes one side of Ore's theorem and shows that a cyclic subfactor is singly generated in the sense that there is a minimal $ 2 $-box projection generating the identity biprojection. We conjecture that this result holds without assuming the biprojections to be normal, and we show that it is true for small lattices. We finally exhibit a dual version of another theorem of Ore and a non-trivial upper bound for the minimal number of irreducible components for a faithful complex representation of a finite group.
\end{abstract}
\maketitle
\section{Introduction} 
Vaughan Jones proved in \cite{jo2} that the set of possible values for the index $ |M:N| $ of a subfactor $ (N \subseteq M) $ is 
 $$ \{ 4 cos^2(\frac{\pi}{n}) \ \vert \ n \geq 3 \} \sqcup [4,\infty]. $$ We observe that it is the disjoint union of a discrete series and a continuous series.
Moreover, for a given intermediate subfactor $ N \subseteq P \subseteq M $, $ |M:N| = |M:P| \cdot |P:N| $, therefore by applying a kind of Eratosthenes sieve, we get that a subfactor of index in the discrete series or in the interval $ (4,8)$, except the countable set of numbers composed of numbers in the discrete series, can't have a non-trivial intermediate subfactor. A subfactor without non-trivial intermediate subfactor is called maximal \cite{bi}. For example, any subfactor of index in $ (4,3+\sqrt{5}) $ is maximal; (except $ A_{\infty} $) there are exactly $ 19 $ irreducible subfactor planar algebras for this interval (see \cites{jms,amp}), the first example is the Haagerup subfactor \cite{ep}. Thanks to Galois correspondence \cite{nk}, a finite group subfactor, $ (R^G \subseteq R) $ or $ (R \subseteq R \rtimes G) $, is maximal if and only if it is a prime order cyclic group subfactor (i.e. $ G = \mathbb{Z}/p $ with $ p $ prime). Thus we can say that the maximal subfactors are an extension of the prime numbers.
\begin{question}
What could be the extension of the natural numbers?
\end{question}
To answer this question, we need to find a natural class of subfactors, that we will call the ``cyclic subfactors'', satisfying: 
\begin{itemize}
\item[(1)] Every maximal subfactor is cyclic.
\item[(2)] A finite group subfactor is cyclic iff the group is cyclic.
\end{itemize} 
An old and little known theorem published in 1938 by the Norwegian mathematician {\O}ystein Ore states that: 
\begin{theorem}[\cite{or}]
A finite group $ G $ is cyclic if and only if its subgroup lattice $ \mathcal{L}(G) $ is distributive.
\end{theorem}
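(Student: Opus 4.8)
The plan is to prove the two implications separately, the forward direction being essentially a computation and the converse carrying all the weight.

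For the forward direction I would identify the subgroup lattice of $G=\mathbb{Z}/n$ with the divisor lattice $D_n$: sending a divisor $d\mid n$ to the unique subgroup of order $d$ is a lattice isomorphism $D_n \xrightarrow{\sim}\mathcal{L}(G)$, under which $\gcd$ and $\operatorname{lcm}$ correspond to intersection and generated subgroup. Distributivity of $D_n$ is then checked prime-exponent by prime-exponent: on each coordinate the operations become $\min$ and $\max$ on a chain of integers, and $\min,\max$ distribute over one another on any totally ordered set. Equivalently, writing $n=\prod_i p_i^{a_i}$ one has $D_n\cong\prod_i C_{a_i+1}$, a product of chains, and a product of distributive lattices is distributive.

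For the converse I would argue the contrapositive: if $G$ is not cyclic, then $\mathcal{L}(G)$ contains the diamond $M_3$ as a sublattice and is therefore non-distributive (by the Dedekind--Birkhoff forbidden-sublattice criterion, or by exhibiting the failure of the distributive law directly on five elements). The organising reduction is that a finite group is cyclic if and only if it is abelian with every Sylow subgroup cyclic (the ``if'' being the Chinese Remainder Theorem). So a non-cyclic $G$ fails one of these two conditions, and I would treat the two failures in turn. The key technical device throughout is that for $H\trianglelefteq K\le G$ the interval $[H,K]$ of $\mathcal{L}(G)$ is a sublattice isomorphic to $\mathcal{L}(K/H)$; hence any $M_3$ found in a subquotient lattice transports to $\mathcal{L}(G)$, and since sublattices of distributive lattices are distributive, producing one such $M_3$ suffices.

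The two cases are then as follows. \emph{(i)} If some Sylow $p$-subgroup $P$ is non-cyclic, I invoke the classification of finite $p$-groups possessing a unique subgroup of order $p$ (cyclic, or generalised quaternion when $p=2$): a non-cyclic $P$ must therefore contain either an elementary abelian $(\mathbb{Z}/p)^2$ or a copy of $Q_8$. In $(\mathbb{Z}/p)^2$ the $p+1\ge 3$ lines all meet in $0$ and all pairwise join to the whole plane; in $Q_8$ the three subgroups of order $4$ all meet in the centre and all pairwise join to $Q_8$. Either configuration, together with its top and bottom, is a copy of $M_3$ inside $\mathcal{L}(P)\subseteq\mathcal{L}(G)$. \emph{(ii)} If every Sylow subgroup is cyclic but $G$ is non-abelian, then $G$ is a $Z$-group, hence metacyclic, with $|G|$ divisible by at least two primes; its structure theory yields a non-abelian subquotient $\mathbb{Z}/p\rtimes\mathbb{Z}/q$ of order $pq$ with $q\mid p-1$. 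Its $p\ge 3$ Sylow $q$-subgroups intersect pairwise in the identity, and any two of them generate the whole group, so three of them give an $M_3$ in the relevant interval of $\mathcal{L}(G)$.

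I expect the entire difficulty to sit in the converse, in two places. First, one must check that the chosen minimal subgroups genuinely close up into an $M_3$ \emph{sublattice}, i.e. that their pairwise joins and pairwise meets coincide; this is why I work with the concrete models $(\mathbb{Z}/p)^2$, $Q_8$ and $\mathbb{Z}/p\rtimes\mathbb{Z}/q$ rather than with abstract atoms, whose pairwise joins need not agree. Second, the generalised-quaternion $2$-groups are the genuine obstacle to a naive ``two subgroups of order $p$'' argument, since they have a unique subgroup of order $2$; they must be caught separately through their three subgroups of order $4$. Once these structural inputs are secured, the lattice-theoretic conclusion is immediate.
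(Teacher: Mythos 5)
Your proposal is correct, but your hard direction ($\Leftarrow$) runs along a genuinely different route from the paper's. The paper obtains it as the special case $H=\{e\}$ of Ore's interval theorem (Theorem \ref{ore2}): a distributive interval $[H,G]$ is $H$-cyclic. That proof is positive and inductive: reduce to the top interval, which is boolean by Lemma \ref{topBn}, then induct on the height, merging generators of a coatom $M$ and of its complement $M^{\complement}$ into a single generator $g=ab$ by a distributivity computation. You instead argue the contrapositive via forbidden sublattices: a non-cyclic finite group has a subgroup or subquotient isomorphic to $(\mathbb{Z}/p)^2$, to $Q_8$, or to a non-abelian $\mathbb{Z}/p\rtimes\mathbb{Z}/q$, each of whose subgroup lattices visibly contains the diamond $M_3$; since an interval $[H,K]$ with $H\trianglelefteq K$ is a sublattice of $\mathcal{L}(G)$ isomorphic to $\mathcal{L}(K/H)$, and sublattices of distributive lattices are distributive, $\mathcal{L}(G)$ cannot be distributive. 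Your case analysis is sound: the unique-subgroup-of-order-$p$ theorem does reduce case (i) to $(\mathbb{Z}/p)^2$ or generalized quaternion, and generalized quaternion groups contain $Q_8$; in case (ii) the word \emph{subquotient} is genuinely needed (the order-$q$ subgroup of the cyclic complement can act trivially, e.g.\ when the acting kernel has index $q$ in it), and your interval-transport mechanism correctly absorbs this. The trade-offs are clear: your route spends its effort on group-theoretic classification (unique minimal subgroup, H\"older--Zassenhaus structure of groups with all Sylows cyclic) while using only trivial lattice theory, and it yields the sharper qualitative fact that every non-cyclic finite group has an $M_3$ in its subgroup lattice; the paper's route uses essentially no structure theory of groups beyond the lattice itself, which is precisely why it can be transported verbatim to subfactor planar algebras (Theorem \ref{centheo}) --- the whole point of the paper --- and it proves the stronger interval statement (Theorem \ref{ore2}) along the way, neither of which your contrapositive argument can reach, as there is no analogue of these group classifications for biprojections. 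The forward direction (cyclic $\Rightarrow$ distributive) is the same divisor-lattice computation in both proofs.
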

Firstly, the intermediate subfactor lattice of a maximal subfactor is obviously distributive. Next, by Galois correspondence, the intermediate subfactor lattice of a finite group subfactor is exactly the subgroup lattice (or its reversal) of the group; but distributivity is invariant under reversal, so (1) and (2) hold by Ore's theorem. 
Now an abelian group, and a fortiori a cyclic group, admits only normal subgroups; but T. Teruya generalized in \cite{teru} the notion of normal subgroup by the notion of normal intermediate subfactor, so: 
\begin{definition}
A finite index irreducible subfactor is cyclic if all its intermediate subfactors are normal and form a distributive lattice.
\end{definition}
 
Note that an irreducible finite index subfactor $ (N \subseteq M) $ admits a finite lattice $ \mathcal{L}(N \subseteq M) $ of intermediate subfactors by \cite{wa}, as for the subgroup lattice of a finite group. Moreover, a finite group subfactor remembers the group by \cite{jo}. 
Section \ref{cycl} exhibits plenty of examples of cyclic subfactors: of course the cyclic group subfactors and the (irreducible finite index) maximal subfactors; moreover, up to equivalence, exactly $ 23279 $ among $ 34503 $ inclusions of groups of index $ < 30 $, give a cyclic subfactor. The class of cyclic subfactors is stable under dual, intermediate, free composition and certain tensor products. 
Now the natural problem about cyclic subfactors is to understand in what sense they are ``singly generated''. To answer this question, we extend the following theorem of Ore.
\begin{theorem}[O. Ore, \cite{or}] \label{introre2} If an interval of finite groups $ [H,G] $ is distributive, then $ \exists g \in G $ such that $ \langle H, g \rangle = G $. 
\end{theorem}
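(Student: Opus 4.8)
The plan is to recast the conclusion as a covering statement and then use distributivity to forbid the covering. Note first that $\langle H,g\rangle$ is always a member of the interval $[H,G]$, and that it equals $G$ precisely when $g$ lies in no maximal element of $[H,G]$: if $\langle H,g\rangle\subsetneq G$ then it sits below some coatom $M$ of $[H,G]$, whence $g\in M$; conversely an element avoiding every coatom cannot lie in a proper intermediate subgroup. Thus the theorem is \emph{equivalent} to the assertion that $G$ is not the union of the maximal elements $M_1,\dots,M_n$ of $[H,G]$, and the whole problem reduces to this non-covering statement.

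The next step is to extract the Boolean structure that distributivity forces at the top. Writing $w=\bigwedge_{i=1}^n M_i=\bigcap_{i=1}^n M_i$, a standard fact about finite distributive lattices (via Birkhoff's representation by down-sets of the poset of join-irreducibles, in which the coatoms are the complements of the maximal elements) shows that the interval $[w,G]$ is Boolean, $[w,G]\cong 2^n$, with the $M_i$ as coatoms and $w$ as bottom; every meet $M_S:=\bigcap_{i\in S}M_i$ is then a distinct intermediate subgroup, and these exhaust the $2^n$ elements of the interval. This is exactly where distributivity is essential: in a non-distributive lattice the coatoms could instead form a diamond $M_{p+1}$, as happens for $(\mathbb{Z}/p)^2$, and that is the configuration we must exclude.

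I would then translate non-covering into counting over the left cosets of $w$. Since each $M_i$ contains $w$, every $M_S$ is a union of cosets of $w$, and the coset-set of $M_S\cap M_T$ is the intersection of the coset-sets; hence inclusion–exclusion is \emph{exact} and gives
\[
|G|-\Big|\bigcup_{i=1}^n M_i\Big|\;=\;|w|\sum_{S\subseteq\{1,\dots,n\}}(-1)^{|S|}\,[\,M_S:w\,],
\]
with the convention $M_\emptyset=G$. So it remains to prove the positivity $\Delta:=\sum_{S}(-1)^{|S|}[\,M_S:w\,]>0$. The arithmetic input is the product formula $|M_iM_j|=|M_i||M_j|/|M_i\cap M_j|$ together with $M_iM_j\subseteq M_i\vee M_j$, which yields the log-supermodular inequality $[\,M_i:w\,]\,[\,M_j:w\,]\le[\,M_i\vee M_j:w\,]\,[\,M_i\wedge M_j:w\,]$ for the index function. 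For $n=2$ this already gives $\Delta\ge(\,[\,M_1:w\,]-1)(\,[\,M_2:w\,]-1)>0$, which is the classical fact that a group is never the union of two proper subgroups. For general $n$ I would induct, peeling off one coatom $M_n$ together with its complementary atom $A_n=\bigcap_{j\ne n}M_j$ (so $A_n\wedge M_n=w$ and $A_n\vee M_n=G$); splitting the sum according to whether $n\in S$ rewrites $\Delta$ through the two rank-$(n-1)$ Boolean sub-intervals $[w,M_n]$ and $[A_n,G]$.

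The main obstacle is precisely the sign control in this induction. Splitting the sum exhibits $\Delta\,|w|=|U\setminus M_n|$, where $U=G\setminus\bigcup_{i<n}M_i$ is nonempty by the inductive hypothesis applied to $[A_n,G]$, so $\Delta$ appears as a \emph{difference} of two lower-rank contributions rather than a sum, and one must show that the elements avoiding $M_1,\dots,M_{n-1}$ are not all swallowed by $M_n$. Pairwise log-supermodularity of the index function is not by itself enough to force the alternating sum positive; one must feed in the extra features that hold automatically here — that $K\mapsto[\,K:w\,]$ is monotone along the Boolean interval, is integer-valued with $[\,M_T:w\,]$ dividing $[\,M_S:w\,]$ whenever $S\subseteq T$, and equals $1$ only at the bottom — in order to pin down the signs. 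Equivalently, the content to be nailed down is that a finite group cannot be covered by a family of subgroups whose intersection lattice is Boolean; distributivity has already ruled out the sole obstruction to this, namely an elementary abelian section $(\mathbb{Z}/p)^2$ whose maximal subgroups would form the non-distributive diamond $M_{p+1}$.
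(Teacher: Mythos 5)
Your reduction is correct as far as it goes: the equivalence between ``$\exists g$ with $\langle H,g\rangle=G$'' and ``$G\neq\bigcup_i M_i$'' is right, the passage to the Boolean top interval $[w,G]$ is the same reduction the paper makes (Lemma \ref{topBn} together with the last claim in the proof of Theorem \ref{ore2}), inclusion--exclusion is exact for any finite sets, and the case $n\le 2$ is the classical fact that no group is the union of two proper subgroups. But the heart of the matter --- the positivity $\Delta>0$ for $n\ge 3$, equivalently that the elements avoiding $M_1,\dots,M_{n-1}$ are not all swallowed by $M_n$ --- is precisely what you do not prove, and you acknowledge this (``the main obstacle'', ``the content to be nailed down''). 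The tools you propose are not shown to suffice, and it is doubtful they can as stated: the Klein four-group data $f(\emptyset)=4$, $f(\{i\})=2$, $f(S)=1$ for $|S|\ge 2$ satisfies log-supermodularity, divisibility and monotonicity yet gives $\Delta=0$, so any purely arithmetic argument must exploit the distinctness of the meets $M_S$ (your ``equals $1$ only at the bottom'') in an essential way that you never specify. Your closing claim that an elementary abelian section $(\mathbb{Z}/p)^2$ is ``the sole obstruction'' to covering a group by subgroups is also not a citable theorem: Scorza-type results concern coverings by exactly three subgroups, and for instance $S_3$ is covered by its four proper subgroups while having no $(\mathbb{Z}/p)^2$ section. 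In effect, the proposal reduces Ore's theorem to a non-covering statement of essentially the same difficulty and leaves that statement unproven.

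For comparison, the paper closes exactly this step with no counting at all. In the Boolean case it takes, by induction on the height, elements $a$ and $b$ with $\langle H,a\rangle=M$ and $\langle H,b\rangle=M^{\complement}$ for a coatom $M$ and its complement, sets $g=ab$, notes that $\langle H,g\rangle\vee M\supseteq\langle H,a,g\rangle=\langle H,a,b\rangle=G$ and similarly for $M^{\complement}$, and then applies distributivity once: $\langle H,g\rangle=\langle H,g\rangle\vee(M\wedge M^{\complement})=(\langle H,g\rangle\vee M)\wedge(\langle H,g\rangle\vee M^{\complement})=G$. The single element $g=ab$, produced constructively from the two inductive generators, is the idea your counting framework lacks; to salvage your route you would need either to import this trick (at which point the counting becomes superfluous) or to supply a genuinely new proof of the non-covering statement for Boolean intersection lattices.
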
 
\begin{theorem} \label{introthm}
An irreducible subfactor planar algebra whose biprojections are central and form a distributive lattice, has a minimal $ 2 $-box projection generating the identity biprojection (i.e. w-cyclic subfactor).
\end{theorem}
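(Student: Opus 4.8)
The plan is to reduce w-cyclicity to a positivity statement about the trace, and then to prove that positivity by Möbius inversion on the distributive lattice together with a normality-driven product formula for the coindex, mirroring Ore's group-theoretic argument (where it is exactly the coprimality of the indices of the maximal subgroups of a cyclic group that makes the relevant Euler-type product positive).

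\emph{Step 1 (reduction to avoiding coatoms).} Let $\mathcal L$ be the lattice of biprojections, with top the identity biprojection $\mathbf 1$ and coatoms $b_1,\dots,b_k$, and for a projection $p\in P_{2,+}$ write $\langle p\rangle$ for the smallest biprojection dominating it. Since in a finite lattice every element below the top lies under some coatom, $\langle p\rangle=\mathbf 1$ iff $p\not\le b_i$ for all $i$. Thus the subfactor is w-cyclic as soon as there is a \emph{minimal} projection lying under no coatom, and the whole problem becomes one of producing such a projection.

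\emph{Step 2 (use centrality to linearise).} Here the hypothesis that all biprojections are central is decisive. Writing $P_{2,+}=\bigoplus_j M_{n_j}(\mathbb C)$ with central units $z_j$, each central biprojection is a sum of the $z_j$, and a minimal projection $p$ in block $j$ satisfies $p\le b$ iff $z_j\le b$. Hence all minimal projections of a given block generate the same biprojection $\beta(j)=\bigwedge\{b:\ z_j\le b\}$, and, using that the meet of biprojections coincides with their meet as projections (the intersection of intermediate subfactors), one gets $\beta(j)\le b\iff z_j\le b$. With $\mathrm{tr}$ the Markov trace normalised by $\mathrm{tr}(\mathbf 1)=1$ this gives $\mathrm{tr}(b)=\sum_{\beta(j)\le b}\mathrm{tr}(z_j)$, so by Möbius inversion the total trace of the blocks generating the top is
\[
\nu(\mathbf 1)=\sum_{c\in\mathcal L}\mu(c,\mathbf 1)\,\mathrm{tr}(c),
\]
and, since each $\mathrm{tr}(z_j)>0$, a minimal generating projection exists iff $\nu(\mathbf 1)>0$.

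\emph{Step 3 (distributivity and the product formula).} Because $\mathcal L$ is distributive, $\mu(c,\mathbf 1)$ vanishes unless $[c,\mathbf 1]$ is Boolean, i.e. unless $c=b_S:=\bigwedge_{i\in S}b_i$ for some $S\subseteq\{1,\dots,k\}$, in which case $\mu(b_S,\mathbf 1)=(-1)^{|S|}$. This collapses the sum to $\nu(\mathbf 1)=\sum_{S}(-1)^{|S|}\mathrm{tr}(b_S)$, which by inclusion–exclusion for the commuting (central) projections equals $\mathrm{tr}(\mathbf 1-\sup_i b_i)\ge 0$; the content is the strict inequality, equivalently that the coatoms do not cover all blocks. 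To force strictness I would prove the subfactor analogue of the fact that distinct maximal subgroups of a cyclic group have coprime coindex: for normal biprojections whose join is $\mathbf 1$ the coindex is multiplicative, $\mathrm{tr}(b_S)/\mathrm{tr}(\mathbf 1)=\prod_{i\in S}\bigl(\mathrm{tr}(b_i)/\mathrm{tr}(\mathbf 1)\bigr)$, this being the incarnation of the second isomorphism theorem $G/(H\cap K)\cong G/H\times G/K$ (Teruya's normality supplying the product decomposition). Granting the joint form of this formula the alternating sum factors,
\[
\nu(\mathbf 1)=\mathrm{tr}(\mathbf 1)\prod_{i=1}^{k}\Bigl(1-\tfrac{\mathrm{tr}(b_i)}{\mathrm{tr}(\mathbf 1)}\Bigr)>0,
\]
each factor lying in $(0,1)$ since $b_i<\mathbf 1$; any minimal projection in a block generating $\mathbf 1$ is then the required generator.

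\emph{Main obstacle.} The bookkeeping of Steps 1--2 is routine once centrality is invoked; the crux is Step 3. Establishing multiplicativity of the coindex over meets of coatoms — and in particular its \emph{joint} version across all $k$ coatoms, which requires iterating the normal product decomposition inside successive intervals and verifying that normality and the Boolean top structure persist at each stage — is where the genuine work lies. Should only a pairwise version be available, I would instead prove positivity of the alternating sum directly by induction on $k$, peeling off one coatom at a time and using strict submodularity of $\mathrm{tr}$ together with distributivity to keep every partial sum positive.
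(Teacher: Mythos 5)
Your Steps 1--2 are correct, and under the centrality hypothesis they are a faithful reformulation of the problem: writing $\mathcal{P}_{2,+}=\bigoplus_j M_{n_j}(\mathbb{C})$ with central units $z_j$, every biprojection is a sum of $z_j$'s, the lattice meet of biprojections agrees with the projection meet, and since each $b_i$ is central one has $z_j\not\le b_i\iff z_jb_i=0$; hence $\nu(\hat 1)=\sum_S(-1)^{|S|}tr(b_S)=tr\bigl(\prod_i(id-b_i)\bigr)=tr(id-\bigvee_i^{proj}b_i)$, and this is $>0$ exactly when some block $z_j$ avoids all coatoms, i.e.\ exactly when $\mathcal{P}$ is w-cyclic. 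But this means Steps 1--2 only restate w-cyclicity in trace form; the entire content sits in Step 3, and there the argument has a genuine gap. The product formula $tr(b_S)/tr(id)=\prod_{i\in S}tr(b_i)/tr(id)$ is never proved, and the justification offered is mismatched with the hypotheses: you invoke Teruya's \emph{normality}, but normal means \emph{bicentral}, which is strictly stronger than the centrality assumed in the theorem (the paper keeps these separate deliberately: Theorem \ref{thm} is the bicentral corollary of Theorem \ref{centheo}). The known index-multiplicativity results for quadrilaterals of intermediate subfactors require both the commuting \emph{and} the cocommuting square conditions; centrality of $b_i,b_j$ gives $b_ib_j=b_jb_i$ but says nothing about $\mathcal{F}(b_i)\mathcal{F}(b_j)$, so even the pairwise formula $tr(b_i\wedge b_j)\,tr(id)=tr(b_i)tr(b_j)$ is unjustified here, let alone the joint version over all coatoms --- which you yourself flag as ``where the genuine work lies.''

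The fallback does not repair this: ``strict submodularity of $tr$'' is the wrong direction. Since the biprojection join dominates the projection join while the meets coincide, Kaplansky's parallelogram law $tr(p\vee q)+tr(p\wedge q)=tr(p)+tr(q)$ for projections yields \emph{super}modularity $tr(b\vee c)+tr(b\wedge c)\ge tr(b)+tr(c)$ on the biprojection lattice; accordingly $\nu(\hat 1)\ge 0$ is automatic (it is the trace of a positive operator, as above), and the only issue is strictness --- which is precisely the theorem, so the proposed induction is circular as stated. For comparison, the paper's proof of Theorem \ref{introthm} (proved as Theorem \ref{centheo}) needs no trace estimate at all: it inducts on the height of the lattice, reduces to the boolean case via Lemmas \ref{topBn} and \ref{Topw}, takes by induction and Proposition \ref{left} minimal projections $u,v$ with $\langle u\rangle=b$ and $\langle v\rangle=b^{\complement}$ for a coatom $b$, and shows that \emph{any} minimal projection $c\preceq u*v$ satisfies $\langle c\rangle=\langle c,u\rangle\wedge\langle c,v\rangle=id$, using distributivity, the Frobenius-type Lemma \ref{pfr}, and centrality through the central supports $Z(u')=Z(u)$, $Z(v')=Z(v)$ --- the exact planar analogue of Ore's boolean-interval argument. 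Your route would become a genuinely different (and interesting) proof only if the cocommutation/multiplicativity input were first established under mere centrality, and that missing lemma is the hard part.
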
 
\noindent But ``normal'' means ``bicentral'', so a cyclic subfactor planar algebra is w-cyclic. The converse is false, a group subfactor $ (R^G \subseteq R) $ is cyclic if and only if $ G $ is cyclic, and is w-cyclic if and only if $ G $ is linearly primitive (take $ G=S_3 $). That's why we have chosen the name w-cyclic (i.e. weakly cyclic). 
We conjecture that Theorem \ref{introthm} holds without the assumption ``central''.
\begin{conjecture} \label{introconj}
An irreducible subfactor planar algebra with a distributive biprojection lattice is w-cyclic.
\end{conjecture}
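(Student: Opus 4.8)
The plan is to prove Conjecture \ref{introconj} as the exact planar-algebra analogue of Ore's interval theorem (Theorem \ref{introre2}), lifting Ore's group-theoretic argument to the lattice of biprojections. The first move fixes the dictionary: for a minimal $2$-box projection $p$ and a biprojection $b$, the biprojection $\langle p\rangle$ generated by $p$ is the smallest biprojection dominating $p$, so $\langle p\rangle=\mathrm{id}$ (the top biprojection) if and only if $p\not\leq b_i$ for every coatom $b_1,\dots,b_k$ of the biprojection lattice. Thus the subfactor is w-cyclic precisely when some minimal $2$-box projection escapes the union of the coatoms, and the whole problem reduces to showing that the coatoms cannot ``cover'' all minimal projections. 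This is the direct counterpart of Ore's key step, where one produces $g\in G$ lying in no maximal element of $[H,G]$, so that $\langle H,g\rangle=G$.

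Next I would make precise what ``covering'' means and why distributivity forbids it. In Ore's setting the obstruction to non-covering is the Klein four group $V_4$, whose three subgroups of order two cover it and whose lattice is the diamond $M_3$; distributivity outlaws $M_3$ as a sublattice (Birkhoff), and this is exactly what rules out the covering. I would therefore try to show, block by block of the $2$-box algebra, that the family of sets $\{p:\ p\leq b_i\}$ cannot exhaust the space of minimal projections: each such set is a proper ``face'' determined by the range of $b_i$, of size controlled by the trace (categorical dimension) of $b_i$, and I would run an inclusion--exclusion over the distributive lattice, using the Möbius function together with the multiplicativity of biprojection traces across intervals (the planar-algebra avatar of index multiplicativity $|M:N|=|M:P|\cdot|P:N|$), to bound the total measure of $\bigcup_i\{p:\ p\leq b_i\}$ strictly below that of the full minimal-projection space. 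Birkhoff's representation of the lattice by its join-irreducibles should organise this count: the join-irreducible biprojections play the role of the ``cyclic pieces'', and a minimal projection placed generically with respect to each of them will sit under no coatom.

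The central case, Theorem \ref{introthm}, is the template I would imitate and the place where the argument genuinely simplifies: when all biprojections are central the $2$-box space decomposes along the central projections, each $\{p:\ p\leq b_i\}$ becomes a union of whole central summands, and the covering question degenerates into a finite combinatorial partition problem to which the no-diamond property applies directly. The main obstacle is precisely the loss of this decomposition once centrality is dropped: the minimal projections then form positive-dimensional homogeneous (flag-type) spaces rather than a finite set, and the subvarieties $\{p:\ p\leq b_i\}$, although of positive codimension whenever $b_i\neq\mathrm{id}$, might still conspire to cover a whole irreducible component through their continuous overlap --- something the lattice condition alone does not obviously prevent. Overcoming this requires a quantitative non-covering estimate that converts the purely order-theoretic ``no $M_3$'' into a strict dimension or trace inequality on these varieties; I expect this estimate to be the crux, and its difficulty is consistent with the statement being only a conjecture: provable by direct enumeration of coatoms and trace constraints for small lattices, but open in general.
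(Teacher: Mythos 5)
First, a point of status: the statement you are proving is Conjecture \ref{conjext} of the paper, not a theorem --- the paper establishes it only when all biprojections are central (Theorem \ref{centheo}) and when the lattice has fewer than $32$ elements (Theorem \ref{le2}, Corollary \ref{32}) --- so an incomplete argument is expected; what must be judged is whether your strategy could deliver even these known cases. Your opening reduction is correct and is exactly the paper's (it underlies Lemma \ref{Topw} and Proposition \ref{l<}): w-cyclicity means some minimal projection escapes every coatom. But your diagnosis of where the difficulty lies is wrong. The ``continuous overlap'' you fear cannot occur: inside a block with minimal central projection $p_s$, the minimal projections lying under a coatom $b_i$ are exactly the lines of $\text{range}(b_i \wedge p_s)$, and a finite union of \emph{proper} subspaces never covers a complex vector space --- this elementary fact is precisely the proof of Theorem \ref{mini}. (Your parenthetical claim that these sets have positive codimension whenever $b_i \neq id$ is also false: a non-central coatom may contain a whole block.) Consequently the conjecture is equivalent to a purely discrete statement about central supports --- if the lattice is distributive, some minimal \emph{central} projection generates $id$ --- and the genuinely hard case is when every block is wholly absorbed by some coatom. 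A measure, dimension, or codimension estimate cannot see this case at all.

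Second, the quantitative tool you propose, M\"{o}bius inclusion--exclusion on traces, is a dead end. The join in the biprojection lattice is the \emph{generated} biprojection, not the range projection of a sum, and its trace obeys no additive identity (already for subgroups, $|\langle H,K\rangle| \neq |H|+|K|-|H\cap K|$; index is multiplicative along chains, not additive across the lattice); moreover inclusion--exclusion for three or more projections requires distributivity of the ambient \emph{projection} lattice, which fails in any noncommutative algebra. Pure trace counting yields only Proposition \ref{l<} (the bound $\sum_i |id:b_i|^{-1} \le 1$); already the improvement to $\le 2$ in Theorem \ref{le2}, which is what gives the $<32$ cases, requires the coproduct via Lemmas \ref{wmin} and \ref{pfr}. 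And the remark following Theorem \ref{le2} exhibits w-cyclic $\mathcal{B}_n$ examples $(R \rtimes S^n_2 \subset R \rtimes S^n_3)$ with $\sum_i |id:b_i|^{-1} = n/3$, so no universal trace inequality of the kind you want can exist. The ingredient entirely absent from your plan is the coproduct $*$: the planar-algebra counterpart of Ore's trick $g=ab$ is to take a minimal projection $c \preceq u * v$ and compute, by distributivity, $\langle c \rangle = \langle c,u \rangle \wedge \langle c,v \rangle$, using Lemma \ref{pfr} to replace $u,v$ by projections $u',v'$ with the same central supports; centrality (Theorem \ref{centheo}) or the trace bound (Theorem \ref{le2}) is then what converts central supports back into the needed domination. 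Relatedly, your claim that the central case ``degenerates into a finite combinatorial partition problem to which the no-diamond property applies directly'' is not accurate: in the model case $\mathcal{P}(R \subseteq R \rtimes G)$, where the $2$-box space is abelian and everything is central, that partition problem is literally Ore's Theorem \ref{ore1} for $G$, whose proof needs the group multiplication and not merely the exclusion of $M_3$ sublattices. An approach that sees only the abstract lattice and the traces, and never the convolution, cannot recover even the cases that are known.
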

\noindent It is true if the lattice has less than $32$ elements (and so, at index $ < 32 $). Now the group-theoretic reformulation of Conjecture \ref{introconj} for the planar algebra $ \mathcal{P}(R^G \subseteq R^H) $, gives a dual version of Theorem \ref{introre2}.
\begin{conjecture} \label{dualore} If the interval of finite groups $ [H,G] $ is distributive then $ \exists V $ irreducible complex representation of $ G $ such that $ G_{(V^H)} = H $. 
\end{conjecture}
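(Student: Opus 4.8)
The plan is to translate the statement into a question about fixed-point dimensions and only then to feed in distributivity. First I would record the crucial elementary observation that for $K\in[H,G]$ and any representation $V$ of $G$, the pointwise stabilizer $G_{(V^H)}$ contains $K$ if and only if $V^K=V^H$: indeed if $H$ and $g$ both fix $V^H$ pointwise then so does $\langle H,g\rangle$, while $V^{\langle H,g\rangle}\subseteq V^H$ holds automatically. Hence $G_{(V^H)}$ is the largest $K\in[H,G]$ with $V^K=V^H$, and the desired conclusion $G_{(V^H)}=H$ is equivalent to the purely numerical condition
$$ \dim V^A < \dim V^H \quad \text{for every atom } A \text{ of } [H,G], $$
the atoms being the minimal overgroups of $H$. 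By Frobenius reciprocity $\dim V^K=\langle\chi_V,\mathbf{1}_K^G\rangle$ is the multiplicity of $V$ in $\mathbb{C}[G/K]$, so I am hunting for an irreducible $V$ occurring in $\mathbb{C}[G/H]$ with strictly larger multiplicity than in each $\mathbb{C}[G/A]$.

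Second, I would establish the global version $\bigcap_V G_{(V^H)}=H$, the intersection over all irreducibles. Decomposing $\mathbb{C}[G/H]=\bigoplus_V V^{\oplus\dim V^H}$, an element fixes $(\mathbb{C}[G/H])^H$ pointwise iff it fixes every $V^H$ pointwise; but $(\mathbb{C}[G/H])^H$ has the double-coset basis, and the trivial coset $eH$ is itself $H$-fixed, so any $x$ fixing it obeys $xH=H$, i.e. $x\in H$. Consequently, for each atom $A$ the set $S_A=\{V\text{ irreducible}:\dim V^A<\dim V^H\}$ is nonempty, since otherwise $A$ would lie in every $G_{(V^H)}$, hence in $H$. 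What remains — the entire content beyond group-algebra bookkeeping — is to show $\bigcap_A S_A\neq\emptyset$, i.e. to detect all atoms by a single irreducible.

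Third, this is where I would inject distributivity. A convenient sufficient condition is that the subrepresentation $N=\sum_A \pi_A^*\,\mathbb{C}[G/A]\subseteq\mathbb{C}[G/H]$ (functions on $G/H$ constant along the fibres of some projection $G/H\twoheadrightarrow G/A$) be proper: if $N\subsetneq\mathbb{C}[G/H]$ then some $V$ has $\dim V^H=\mathrm{mult}_V\,\mathbb{C}[G/H]>\mathrm{mult}_V\,N\geq\max_A\dim V^A$, exactly as wanted. I would try to prove $N\neq\mathbb{C}[G/H]$, equivalently the existence of a nonzero function on $G/H$ summing to zero over every atomic fibre, via Möbius inclusion–exclusion over $[H,G]$: form the virtual character $\sum_K\mu(H,K)\,\mathbf{1}_K^G$ and use that distributivity forces the Möbius function to be the well-behaved ($0,\pm1$) one of a down-set lattice (Birkhoff), so that the constituents with positive coefficient should land in $\bigcap_A S_A$. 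As an alternative route one can invoke the already-proven central case (Theorem \ref{introthm}) and attempt to reduce a general distributive interval to it along a suitable chain of normal steps.

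The step I expect to be the genuine obstacle is precisely the passage from ``each atom is detected by some irreducible'' to ``all atoms are detected by one irreducible''. In an abstract lattice a family of elements with meet $\hat 0$ need not contain $\hat 0$, even under distributivity, so the argument cannot be purely order-theoretic: it must exploit the concrete multiplicities $\dim V^K$ realized by the representation theory, and these are not freely prescribable. Controlling them simultaneously across all atoms using only distributivity of $[H,G]$ is the crux, and is exactly why the statement remains conjectural, verified so far only for intervals with fewer than $32$ elements. A complete proof would, dually to Ore's non-covering argument behind Theorem \ref{introre2}, have to show that the atomic subrepresentations cannot exhaust $\mathbb{C}[G/H]$.
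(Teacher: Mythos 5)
The statement you were given is Conjecture \ref{dualore} (restated later as Conjecture \ref{statgroup}): it is a \emph{conjecture} in the paper, and the paper contains no proof of it. You were right not to manufacture one. What the paper actually proves is: (i) the dictionary (Theorem \ref{wgrp}, via Lemma \ref{corrminstab}) under which the conjecture becomes the statement that $\mathcal{P}(R^G \subseteq R^H)$ is w-cyclic, i.e.\ the group-theoretic reformulation of Conjecture \ref{conjext}; (ii) the case where all biprojections are central (Theorems \ref{centheo} and \ref{thm}); and (iii) the case of distributive lattices with fewer than $32$ elements (Theorem \ref{le2} and Corollary \ref{32}). So your proposal should be judged as a reduction plus an honest identification of the open crux, and on that basis it is essentially correct.

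Your reductions are sound: the equivalence $K \subseteq G_{(V^H)} \Leftrightarrow V^K = V^H$, hence $G_{(V^H)} = H$ iff $\dim V^A < \dim V^H$ for every atom $A$ of $[H,G]$; the Frobenius-reciprocity translation into multiplicities in $\mathbb{C}[G/H]$ versus $\mathbb{C}[G/A]$; the faithfulness statement $\bigcap_V G_{(V^H)} = H$ via the double-coset basis, showing each atom is detected by \emph{some} irreducible; and the sufficiency of properness of $N = \sum_A \pi_A^{*}\,\mathbb{C}[G/A]$. These are the representation-theoretic shadow of the paper's subfactor-side setup: your fixed-point/stabilizer dictionary is exactly Lemma \ref{corrminstab}, and ``one irreducible detecting all atoms'' is exactly a minimal $2$-box projection generating the identity biprojection. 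Two caveats. First, properness of $N$ is a priori \emph{stronger} than the conjecture: a single good irreducible $V$ need not force $N \subsetneq \mathbb{C}[G/H]$, since distinct atoms can contribute distinct copies of $V$ that jointly exhaust the $V$-isotypic part; so even a successful Möbius/inclusion--exclusion argument for your $S_A$-intersection would have to be run at the level of multiplicities, not of the subspace $N$ alone. Second, your alternative suggestion of reducing to the central case (Theorem \ref{introthm}) along ``a chain of normal steps'' cannot work as stated: distributive intervals need not admit any such chain, and the paper's own partial results beyond the central case (Theorem \ref{le2}) use trace estimates on coatoms, not a normality reduction. The obstruction you name --- passing from ``each atom detected by some $V$'' to ``all atoms detected by one $V$'' cannot be purely order-theoretic --- is precisely why the statement remains conjectural in the paper, verified there only in the cases (ii) and (iii) above.
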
 
In general, we deduce a non-trivial upper bound for the minimal number of minimal central projections generating the identity biprojection. For $ \mathcal{P}(R^G \subseteq R) $, this gives a non-trivial upper bound for the minimal number of irreducible components for a faithful complex representation of $ G $. It is a bridge linking combinatorics and representations in the theory of finite groups. This paper is a short version of \cite{pa}.
\tableofcontents 
\normalsize
\section{Ore's theorem on finite groups}
\subsection{Basics in lattice theory} \label{baslat}
A lattice $ (L, \wedge , \vee) $ is a poset $ L $ in which every two elements $ a,b $ have a unique supremum (or join) $ a \vee b $ and a unique infimum (or meet) $ a \wedge b $. Let $ G $ be a finite group. The set of subgroups $ K \subseteq G $ forms a lattice, denoted by $ \mathcal{L}(G) $, ordered by $ \subseteq $, with $ K_1 \vee K_2 = \langle K_1,K_2 \rangle $ and $ K_1 \wedge K_2 = K_1 \cap K_2 $. A sublattice of $ (L, \wedge , \vee) $ is a subset $ L' \subseteq L $ such that $ (L', \wedge , \vee) $ is also a lattice. Consider $ a,b \in L $ with $ a \le b $, then the interval $ [a,b] $ is the sublattice $ \{c \in L \ \vert \ a \le c \le b \} $. Any finite lattice admits a minimum and a maximum, denoted by $ \hat{0} $ and $ \hat{1} $. An atom (resp. coatom) is a minimal (resp. maximal) element in $ L \setminus \{\hat{0}\} $ (resp. $ L \setminus \{\hat{1}\} $). The top interval of a finite lattice $ L $ is the interval $ [t,\hat{1}] $, with $ t $ the meet of all the coatoms. The height of a finite lattice $ L $ is the greatest length of a (strict) chain. A lattice is distributive if the join and meet operations distribute over each other. 
\begin{remark} \label{distable}
Distributivity is stable under taking sublattice, reversal, direct product and concatenation.
\end{remark}
\noindent A distributive lattice is called boolean if any element $ b $ admits a unique complement $ b^{\complement} $ (i.e. $ b \wedge b^{\complement} = \hat{0} $ and $ b \vee b^{\complement} = \hat{1} $). The subset lattice of $ \{1,2, \dots, n \} $, with union and intersection, is called the boolean lattice $ \mathcal{B}_n $ of rank $ n $.
Any finite boolean lattice is isomorphic to some $ \mathcal{B}_n $.
\begin{lemma} \label{topBn}
The top interval of a finite distributive lattice is boolean.
\end{lemma}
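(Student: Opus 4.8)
The plan is to fix the coatoms $c_1, \dots, c_n$ of $L$ and set $t = c_1 \wedge \cdots \wedge c_n$, so that the top interval is $[t,\hat{1}]$. Since an interval is a sublattice, Remark \ref{distable} guarantees that $[t,\hat{1}]$ is itself distributive, and in a distributive lattice a complement (when it exists) is automatically unique; hence to prove that $[t,\hat{1}]$ is boolean it suffices to exhibit, for each $x$ in the interval, an element $x'$ with $x \wedge x' = t$ and $x \vee x' = \hat{1}$. Two preliminary remarks will be used throughout: each $c_i$ lies in $[t,\hat{1}]$ (it is in fact a coatom of the interval), and whenever $x \not\le c_j$ one has $x \vee c_j = \hat{1}$, because the coatom $c_j$ is covered only by $\hat{1}$.

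The technical heart is the following representation: \emph{every $x \in [t,\hat{1}]$ equals the meet of the coatoms lying above it}, i.e. $x = \bigwedge_{i \in A} c_i$ where $A = \{ i : x \le c_i\}$ (with the empty meet read as $\hat{1}$). First I would set $z = \bigwedge_{i\in A}c_i$ and $w = \bigwedge_{j \notin A} c_j$, so that $z \wedge w = t$ and $x \le z$. Since $x \not\le c_j$ for $j \notin A$, the second remark above gives $x \vee w = \hat{1}$. Then distributivity yields
\[
z = z \wedge (x \vee w) = (z \wedge x) \vee (z \wedge w) = x \vee t = x,
\]
where I used $x \le z$ (so $z \wedge x = x$) and $t \le x$ (so $x \vee t = x$). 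This is the one place where distributivity is indispensable, and I expect the clean execution of this identity to be the main obstacle; the rest is bookkeeping around the empty-meet convention.

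With the representation in hand, I would take the candidate complement $x' = w = \bigwedge_{j \notin A} c_j$, which lies in $[t,\hat{1}]$ since $t \le w \le \hat{1}$. For the meet, the representation gives $x \wedge x' = \bigwedge_{i \in A} c_i \wedge \bigwedge_{j \notin A} c_j = \bigwedge_i c_i = t$. For the join, distributivity together with the coatom remark gives $x \vee x' = x \vee \bigwedge_{j\notin A} c_j = \bigwedge_{j \notin A}(x \vee c_j) = \hat{1}$, the degenerate cases $x = t$ (where $A$ is everything and $x' = \hat{1}$) and $x = \hat{1}$ (where $A = \varnothing$ and $x' = t$) being checked directly. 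Thus every element of the interval is complemented, so $[t,\hat{1}]$ is a finite boolean lattice, and in particular isomorphic to some $\mathcal{B}_n$ with $n$ the number of coatoms.
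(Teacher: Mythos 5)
Your proof is correct, and it takes a genuinely different route from the paper's. The paper gives no direct argument at all: it simply cites \cite[items a-i p254-255]{sta}, where the lemma is obtained from Birkhoff's representation theorem, i.e.\ from the identification of a finite distributive lattice with the lattice of order ideals of a finite poset. You instead give a self-contained elementary proof whose engine is the representation $x=\bigwedge_{i\in A}c_i$ with $A=\{i : x\le c_i\}$, valid for every $x$ in the top interval $[t,\hat{1}]$: writing $z=\bigwedge_{i\in A}c_i$ and $w=\bigwedge_{j\notin A}c_j$, your computation $z=z\wedge(x\vee w)=(z\wedge x)\vee(z\wedge w)=x\vee t=x$ is exactly right, and then $w$ is a complement of $x$ in $[t,\hat{1}]$; since the interval is a sublattice it is distributive (Remark \ref{distable}), so complements are unique and the interval is boolean in the sense of the paper's definition. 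What each approach buys: the citation is shorter and sits inside the standard structure theory (Birkhoff's theorem also describes the interval concretely, its atoms corresponding to the coatoms $c_i$ of $L$), while your argument needs nothing beyond the binary distributive law extended to finite meets by induction, so it would keep the paper self-contained where it currently relies on an external reference. One small bookkeeping point: in your proof of the representation, the step from ``$x\not\le c_j$ for all $j\notin A$'' to ``$x\vee w=\hat{1}$'' is not given by the coatom remark alone; it already uses distributivity, namely $x\vee\bigwedge_{j\notin A}c_j=\bigwedge_{j\notin A}(x\vee c_j)$, which is the same identity you display in your final paragraph. That is a wording issue rather than a gap, since you do state and use the identity correctly where it matters.
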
 
\begin{proof}
See \cite[items a-i p254-255]{sta} which uses Birkhoff's representation theorem (a finite lattice is distributive iff it embeds into some $ \mathcal{B}_n $).
\end{proof} 
\noindent A lattice with a boolean top interval will be called \textit{top boolean} (and its reversal, \textit{bottom boolean}).
See \cite{sta} for more details on lattice basics.
\subsection{Ore's theorem on distributive intervals of finite groups}
{\O}ystein Ore proved the following result in \cite[Theorem 4, p267]{or}.
\begin{theorem} \label{ore1}
 
A finite group $ G $ is cyclic if and only if its subgroup lattice $ \mathcal{L}(G) $ is distributive. 
\end{theorem}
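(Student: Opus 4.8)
The plan is to prove the two implications separately, with essentially all the content lying in the ``distributive $\Rightarrow$ cyclic'' direction.

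For the easy direction, suppose $G = \mathbb{Z}/n$ with $n = \prod_i p_i^{a_i}$. Every divisor $d \mid n$ gives a unique subgroup $H_d$ of order $d$, and $H_d \subseteq H_e \iff d \mid e$, with $H_d \vee H_e = H_{\mathrm{lcm}(d,e)}$ and $H_d \wedge H_e = H_{\gcd(d,e)}$. Hence $\mathcal{L}(G)$ is isomorphic to the divisor lattice of $n$, which is the direct product $\prod_i \{0,1,\dots,a_i\}$ of chains. Each chain is trivially distributive, and a direct product of distributive lattices is distributive by Remark~\ref{distable}; so $\mathcal{L}(G)$ is distributive.

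For the converse I would argue by contraposition: assuming $G$ is a finite non-cyclic group, I would exhibit inside $\mathcal{L}(G)$ a sublattice isomorphic to the diamond $M_3$ (three pairwise-incomparable elements $x,y,z$ with a common meet and a common join). Since $M_3$ is not distributive --- indeed $x \wedge (y \vee z) = x$ while $(x\wedge y)\vee(x\wedge z)=\hat 0$ --- and since distributivity is inherited by sublattices by Remark~\ref{distable}, the presence of such an $M_3$ forces $\mathcal{L}(G)$ to be non-distributive, as desired. To localize the search for this diamond, I would first replace $G$ by a \emph{minimal non-cyclic} subgroup $H$, i.e.\ a subgroup of least order among the non-cyclic ones; such an $H$ exists because $G$ itself is non-cyclic and finite, all its proper subgroups are cyclic by minimality, and $\mathcal{L}(H)=[\hat 0, H]$ is a sublattice of $\mathcal{L}(G)$, so a diamond found in $\mathcal{L}(H)$ is a diamond in $\mathcal{L}(G)$.

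The heart of the argument, and the step I expect to be the main obstacle, is the classification of these minimal non-cyclic groups (a classical theorem of Miller and Moreno): such an $H$ is isomorphic to $\mathbb{Z}/p \times \mathbb{Z}/p$, to the quaternion group $Q_8$, or to a non-abelian semidirect product $\mathbb{Z}/q \rtimes \mathbb{Z}/p$ with $p,q$ distinct primes. In each case a diamond is then immediate: in $\mathbb{Z}/p\times\mathbb{Z}/p$ the $p+1\ge 3$ subgroups of order $p$ all meet in $\hat 0$ and join to $\hat 1$; in $Q_8$ the three cyclic subgroups of order $4$ pairwise meet in the centre and join to $Q_8$; and in $\mathbb{Z}/q\rtimes\mathbb{Z}/p$ the normal subgroup of order $q$ together with two of the Sylow $p$-subgroups give three atoms meeting in $\hat 0$ and joining to the whole group. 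Picking any three of the relevant subgroups yields the required copy of $M_3$, completing the contrapositive. I would flag that the only genuinely non-elementary input is the Miller--Moreno classification; if one wished to avoid it, one could instead run a Sylow reduction, handling non-cyclic $p$-groups directly and invoking the structure theory of $Z$-groups (all Sylow subgroups cyclic) for the remaining case, at the cost of a longer argument.
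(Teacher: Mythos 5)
Your easy direction is essentially the paper's (the divisor-lattice argument for $\mathbb{Z}/n$). For the hard direction you take a genuinely different route: the paper obtains it as the special case $H=\{e\}$ of Ore's interval theorem (Theorem \ref{ore2}), proved by induction via the boolean top interval (Lemma \ref{topBn}), whereas you argue by contraposition, reducing to a minimal non-cyclic subgroup and exhibiting a diamond $M_3$, which no distributive lattice can contain as a sublattice. That strategy is sound, and both the reduction step and the ``$M_3$ forces non-distributivity'' step are correct. However, there is a genuine gap in your case analysis: the classification of minimal non-cyclic groups is misstated. The third family is not ``$\mathbb{Z}/q \rtimes \mathbb{Z}/p$ with $p,q$ distinct primes'' but rather $\mathbb{Z}/p \rtimes \mathbb{Z}/q^m$ with $m \ge 1$, where a cyclic group of prime-power order $q^m$ acts nontrivially through its quotient of order $q$. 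A concrete counterexample to your list is the dicyclic group of order $12$, i.e.\ $\mathbb{Z}/3 \rtimes \mathbb{Z}/4$ with $\mathbb{Z}/4$ acting by inversion: it is non-cyclic and is none of your three types, yet every proper subgroup is cyclic (it has a unique involution, so no Klein four subgroup, hence all order-$4$ subgroups are $\mathbb{Z}/4$, and the order-$6$ subgroup is $\mathbb{Z}/6$). As written, your contrapositive therefore does not cover all minimal non-cyclic groups.

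The gap is repairable by the same device you use for $Q_8$. In $G=\mathbb{Z}/p \rtimes \mathbb{Z}/q^m$ the number of Sylow $q$-subgroups is $p \equiv 1 \pmod q$, so there are at least three of them; any two distinct Sylow $q$-subgroups intersect exactly in the kernel $Z$ of the action (the unique index-$q$ subgroup of each, which is normal in $G$, hence contained in all of them, and the intersection can be no larger since the Sylows are distinct of equal order); and any two generate $G$, since a subgroup properly containing a Sylow $q$-subgroup must have order $pq^m$. Thus any three Sylow $q$-subgroups form an $M_3$ with bottom $Z$ and top $G$, and your proof goes through with the corrected classification. It is worth noting what each approach buys: yours is a clean forbidden-sublattice argument, but it imports the Miller--Moreno classification as a black box and is tied to the absolute case $H=\{e\}$ --- the minimal-non-cyclic reduction has no analogue for an interval $[H,G]$, let alone for a biprojection lattice. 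The paper's argument (the claims proving Theorem \ref{ore2}: maximal case, boolean case via complements, top-interval reduction) is self-contained, proves the stronger relative statement, and is precisely the template transported to subfactor planar algebras in Theorem \ref{centheo}, which is the point of the paper.
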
 
\begin{proof}
 ($ \Leftarrow $): It is just a particular case of Theorem \ref{ore2} with $ H = \{e\} $. \\
 ($ \Rightarrow $): A finite cyclic group $ G = \mathbb{Z}/n $ has exactly one subgroup of order $ d $, denoted by $ \mathbb{Z}/d $, for every divisor $ d $ of $ n $. Now $ \mathbb{Z}/d_1 \vee \mathbb{Z}/d_2 = \mathbb{Z}/lcm(d_1,d_2) $ and $ \mathbb{Z}/d_1 \wedge \mathbb{Z}/d_2 = \mathbb{Z}/gcd(d_1,d_2) $, but lcm and gcd distribute other each over, the result follows.
\end{proof}
 \begin{definition} \label{Hcy}
An interval of finite groups $ [H,G] $ is said to be $ H $-cyclic if there is $ g \in G $ such that $ \langle H,g \rangle = G $. Note that $ \langle H,g \rangle = \langle Hg \rangle $. 
\end{definition}
Ore extended one side of Theorem \ref{ore1} to the interval of finite groups \cite[Theorem 7]{or} for which we will give our own proof (which is a group-theoretic reformulation of the proof of Theorem \ref{centheo}):
\begin{theorem} \label{ore2}
A distributive interval $ [H,G] $ is $ H $-cyclic.
\end{theorem}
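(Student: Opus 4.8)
The plan is to translate $H$-cyclicity into a statement about \emph{avoiding} the coatoms of $[H,G]$, and then to build the required element explicitly from the boolean structure at the top of the lattice. First I would record the elementary dictionary. The interval $[H,G]$ is finite, so every proper subgroup in it lies below some coatom; moreover each coatom $M_i$ contains $H$. Hence for $g\in G$ one has $\langle H,g\rangle=G$ if and only if $g$ lies in none of the coatoms $M_1,\dots,M_n$ of $[H,G]$ (for if $g\in M_i$ then $\langle H,g\rangle\subseteq M_i\subsetneq G$). Thus $[H,G]$ fails to be $H$-cyclic precisely when $G=\bigcup_i M_i$, and the theorem becomes the assertion that a distributive interval cannot be covered by its coatoms.

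Next I would invoke Lemma~\ref{topBn}: with $t=\bigwedge_i M_i$ the meet of all coatoms, the top interval $[t,G]$ is boolean, say $[t,G]\cong\mathcal{B}_n$. Under this isomorphism each coatom $M_i$ corresponds to the subset $[n]\setminus\{i\}$ and has a lattice complement, namely the atom $P_i:=\bigwedge_{k\neq i}M_k$ corresponding to $\{i\}$. It satisfies $P_i\wedge M_i=t$, $P_i\vee M_i=G$, and, crucially, $P_i\subseteq M_j$ for every $j\neq i$ (since $\{i\}\subseteq[n]\setminus\{j\}$). As each $P_i$ strictly contains $t$, I may pick $p_i\in P_i\setminus t$; then $p_i\notin M_i$ because $P_i\cap M_i=t$, while $p_i\in M_j$ for all $j\neq i$.

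The element I would propose is the product $g:=p_1p_2\cdots p_n$. Fixing a coatom $M_j$, every factor except $p_j$ lies in $M_j$, so writing $g=u\,p_j\,v$ with $u=p_1\cdots p_{j-1}\in M_j$ and $v=p_{j+1}\cdots p_n\in M_j$ shows that $g\in M_j$ would force $p_j=u^{-1}gv^{-1}\in M_j$, a contradiction. Hence $g$ avoids every coatom, so by the dictionary above $\langle H,g\rangle=G$, which is exactly $H$-cyclicity (Definition~\ref{Hcy}). The degenerate case $G=H$ is covered by the empty product $g=e$.

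I expect the main conceptual obstacle to be producing a \emph{single} generator rather than a generating set. The boolean structure readily yields the complements $P_i$, and one is tempted to conclude $G\neq\bigcup_i M_i$ by inclusion--exclusion on the coset numbers $[M_S:t]$ for $M_S=\bigcap_{i\in S}M_i$. That counting route, however, works out to a positive alternating sum only when the indices multiply, i.e. when $[G:M_i\cap M_j]=[G:M_i]\,[G:M_j]$, which amounts to the coatoms being permutable; distributivity alone does not guarantee this. The purpose of the product-of-complements construction above is precisely to sidestep the permutability question and deliver the single element $g$ directly from the lattice-theoretic complements.
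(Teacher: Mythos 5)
Your proof is correct, and it takes a genuinely different route from the paper's at the decisive step. Both arguments begin identically: by Lemma \ref{topBn} the top interval $[t,G]$ is boolean, and an element lying in no coatom of $[H,G]$ generates $G$ over $H$ --- your ``dictionary'' is exactly the paper's reduction-to-the-top-interval claim. From there the paper runs an induction on the height of the boolean interval: it pairs one coatom $M$ with its lattice complement $M^{\complement}$, produces $a,b$ with $\langle H,a\rangle=M$ and $\langle H,b\rangle=M^{\complement}$ from the induction hypothesis, and checks $\langle H,ab\rangle=G$ by an explicit application of the distributive law to $\langle H,ab\rangle\vee(M\wedge M^{\complement})$. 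You instead use all $n$ complementary atoms $P_i=\bigwedge_{k\neq i}M_k$ simultaneously, pick $p_i\in P_i\setminus t$, and verify that the single product $g=p_1\cdots p_n$ avoids every coatom via the cancellation $p_j=u^{-1}gv^{-1}$ with $u,v\in M_j$. Your version is shorter, induction-free, fully constructive (the generator is exhibited as a product of one element per atom), and invokes distributivity only through Lemma \ref{topBn}; it is essentially the classical avoidance trick. What the paper's two-at-a-time route buys is transferability: the paper states that its proof is a group-theoretic reformulation of the proof of Theorem \ref{centheo}, where group elements become minimal projections and the group product becomes the coproduct of $2$-boxes. There your cancellation step has no analogue --- positive operators are not invertible with respect to the coproduct --- whereas the two-term distributivity computation $\langle c\rangle\vee(\langle u\rangle\wedge\langle v\rangle)=(\langle c\rangle\vee\langle u\rangle)\wedge(\langle c\rangle\vee\langle v\rangle)$ carries over verbatim. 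Your closing paragraph on inclusion--exclusion is a side remark and plays no role in the validity of the argument.
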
 
\begin{proof}
The proof follows from the claims below and Lemma \ref{topBn}.
\\
\begin{claim} \label{max} Let $ M $ be a maximal subgroup of $ G $. Then $ [M,G] $ is $ M $-cyclic.
\end{claim}
\begin{claimproof}
For $ g \in G $ with $ g \not \in M $, we have $ \langle M,g \rangle = G $ by maximality.
\end{claimproof}
\\
\begin{claim} \label{preore2}
A boolean interval $ [H,G] $ is $ H $-cyclic.
\end{claim} 
\begin{claimproof}
Let $ M $ be a coatom in $ [H,G] $, and $ M^{\complement} $ be its complement. By the previous claim and induction on the height of the lattice, we can assume $ [H,M] $ and $ [H,M^{\complement}] $ both to be $ H $-cyclic, i.e. there are $ a, b \in G $ such that $ \langle H,a \rangle = M $ and $ \langle H,b \rangle = M^{\complement} $. For $ g=a b $, $ a=g b^{-1} $ and $ b=a^{-1}g $, so $ \langle H,a,g \rangle = \langle H,g,b \rangle = \langle H,a,b \rangle = M \vee M^{\complement} = G $. 
Now, $ \langle H,g \rangle = \langle H,g \rangle \vee H = \langle H,g \rangle \vee (M \wedge M^{\complement}) $ but by distributivity $ \langle H,g \rangle \vee (M \wedge M^{\complement}) = (\langle H,g \rangle \vee M \rangle) \wedge (\langle H,g \rangle \vee M^{\complement} \rangle) $. So $ \langle H,g \rangle = \langle H,a,g \rangle \wedge \langle H,g,b \rangle = G $. The result follows.
\end{claimproof} 
\\
\begin{claim} \label{topred} 
 $ [H , G] $ is $ H $-cyclic if its top interval $ [K,G] $ is $ K $-cyclic.
\end{claim}
\begin{claimproof}
Consider $ g \in G $ with $ \langle K,g \rangle = G $. For any coatom $ M \in [H,G] $, we have $ K \subseteq M $ by definition, and so $ g \not \in M $, then a fortiori $ \langle H,g \rangle \not \subseteq M $. It follows that $ \langle H,g \rangle=G $.
\end{claimproof} 
\end{proof}
\section{Subfactor planar algebras and biprojections}
For the notions of subfactor, subfactor planar algebra and basic properties, we refer to \cites{js,jo4,sk2}. See also \cite[Section 3]{pa} for a short introduction. A subfactor planar algebra is of finite index by definition.
\subsection{Basics on the 2-box space}
Let $ (N \subseteq M) $ be a finite index irreducible subfactor. The $ n $-box spaces $ \mathcal{P}_{n,+} $ and $ \mathcal{P}_{n,-} $ of the planar algebra $ \mathcal{P}=\mathcal{P}(N \subseteq M) $, are $ N' \cap M_{n-1} $ and $ M' \cap M_{n} $. Let $ R(a) $ be the range projection of $ a \in \mathcal{P}_{2,+} $. We define the relations $ a \preceq b $ by $ R(a) \le R(b) $, and $ a \sim b $ by $ R(a) = R(b) $. Let $ e_1:=e^M_N $ and $ id:=e^M_M $ be the Jones and the identity projections in $ \mathcal{P}_{2,+} $. Note that $ tr(e_1) = |M:N|^{-1} = \delta^{-2} $ and $ tr(id) = 1 $. Let $ \mathcal{F}: \mathcal{P}_{2,\pm} \to \mathcal{P}_{2,\mp} $ be the Fourier transform ($ 90^{\circ} $ rotation), $ \overline{a} := \mathcal{F}(\mathcal{F}(a)) $ the contragredient of $ a \in \mathcal{P}_{2,\pm} $, and $ a * b = \mathcal{F}(\mathcal{F}^{-1}(a) \cdot \mathcal{F}^{-1}(b)) $ the coproduct of $ a,b \in \mathcal{P}_{2,\pm} $.

\begin{lemma} \label{th} Let $ a,b,c,d $ be positive operators of $ \mathcal{P}_{2,+} $. Then
\begin{itemize}
\item[(1)] $ a*b $ is also positive, 
\item[(2)] $ [a \preceq b $ and $ c \preceq d] \Rightarrow a*c \preceq b*d $,
\item[(3)] $ a \preceq b \Rightarrow \langle a \rangle \le \langle b \rangle $,
\item[(4)] $ a \sim b \Rightarrow \langle a \rangle = \langle b \rangle $.
\end{itemize} 
\end{lemma}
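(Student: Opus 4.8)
The plan is to treat (1) as the analytic core and to deduce (2)--(4) from it by purely formal, order-theoretic arguments. For (1), I would use that $(\mathcal{P}_{2,+},\cdot)$ is a finite-dimensional $C^*$-algebra, so positivity of $a$ and $b$ lets me factor $a = y^* y$ and $b = z^* z$ with $y,z \in \mathcal{P}_{2,+}$. I would then substitute these into the planar tangle defining $a*b = \mathcal{F}(\mathcal{F}^{-1}(a)\cdot\mathcal{F}^{-1}(b))$ and read off the resulting diagram. The key observation is that once each input is a box stacked on its own adjoint, the coproduct tangle becomes invariant under the horizontal reflection that implements the $*$-operation of the planar algebra; cutting the diagram along the reflection axis then exhibits $a*b$ in the form $w^*w$ (or a finite sum $\sum_i w_i^* w_i$), which is manifestly positive. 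This is the standard convolution/``Schur-product'' positivity for the $2$-box space, and correctly identifying the reflection symmetry is the one genuinely diagrammatic step.

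Statement (2) should then be a formal consequence of (1) together with one elementary fact: for positive $x,y$ one has $R(x)\le R(y)$ if and only if $x \le \lambda y$ for some $\lambda>0$ (on the range of $y$ the operator $y$ is bounded below, while $x$ vanishes off that range). Thus $a\preceq b$ gives $a\le\lambda b$ and $c\preceq d$ gives $c\le\mu d$. Using bilinearity of $*$ and applying (1) first to the positive pair $\lambda b - a \ge 0$, $c\ge 0$ and then to $b\ge 0$, $\mu d - c\ge 0$, I obtain $a*c \le \lambda\,(b*c) \le \lambda\mu\,(b*d)$. All three coproducts are positive by (1), so $R(a*c)\le R(b*d)$, i.e. $a*c\preceq b*d$.

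Finally, recalling that $\langle a\rangle$ denotes the biprojection generated by $a$ --- equivalently the smallest biprojection dominating the range projection $R(a)$, which exists because the biprojections form a finite lattice --- statement (3) is immediate: if $a\preceq b$ then $R(a)\le R(b)\le\langle b\rangle$, so $\langle b\rangle$ is a biprojection dominating $R(a)$, whence $\langle a\rangle\le\langle b\rangle$ by minimality. (If instead one works from the generation process, iterating a symmetrized $q\mapsto R(q*q)$ starting at $q=R(a)$ until it stabilizes, the same conclusion follows by feeding (2) into each step, as every step is monotone in the initial projection.) Then (4) is just (3) applied in both directions, since $a\sim b$ means $R(a)=R(b)$, hence $a\preceq b$ and $b\preceq a$. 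The \emph{main obstacle} is therefore concentrated entirely in (1): the positivity of the coproduct is the only step requiring genuine planar-algebra input, and once the diagrammatic factorization $a*b=w^*w$ is in hand, the monotonicity statements (2)--(4) drop out with no further analysis.
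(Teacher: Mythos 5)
Your proposal is correct, but it is organized differently from the paper's proof. The paper does not prove (1) or (2) at all: it cites \cite[Theorem 4.1 and Lemma 4.8]{li} for both, and its only actual work is in (3), which it proves by iterating (2): from $a \preceq b$ one gets $a^{*k} \preceq b^{*k}$ for every $k$, hence $\sum_{k=1}^n a^{*k} \preceq \sum_{k=1}^n b^{*k}$ for every $n$, and taking $n$ large enough that both range projections stabilize yields $\langle a \rangle \le \langle b \rangle$ by the construction in Definition \ref{gener}. You instead (i) reprove (1) by the reflection-positivity factorization $a*b = w^{\star}w$, which is exactly the mechanism behind Liu's Theorem 4.1, so this is the same mathematics, merely inlined rather than cited; (ii) derive (2) from (1) via the finite-dimensional equivalence $R(x)\le R(y) \Leftrightarrow x \le \lambda y$ for some $\lambda>0$, a clean formal argument that the paper never spells out; and (iii) prove (3) not from (2) but from the universal property stated at the end of Definition \ref{gener}, namely that $\langle a \rangle$ is the \emph{smallest} biprojection $\succeq a$: since $R(a)\le R(b) \le \langle b \rangle$, minimality gives $\langle a \rangle \le \langle b \rangle$. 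Your route to (3) is shorter and makes (2) logically unnecessary for (3)--(4), but it leans on the ``smallest biprojection'' clause, which the paper imports from \cite[Lemma 4.14]{li} and whose proof is of the same nature as the paper's iteration argument; the paper's version of (3) needs only that the generating process stabilizes. Both are legitimate given the paper's Definition \ref{gener}.

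Two small caveats, neither fatal. First, your parenthetical alternative for (3) --- iterating $q \mapsto R(q*q)$ starting at $R(a)$ --- does not work as stated, because that map need not be increasing: in $\mathcal{P}_{2,+}(R \subseteq R\rtimes G)\cong \mathbb{C}^G$ with $a=e_g$ and $g$ of order $3$, the iteration cycles $e_g \to e_{g^2}\to e_g$ and never captures $\langle e_g\rangle$; one must accumulate the sum $\sum_{k\le n} a^{*k}$ as in Definition \ref{gener}. Second, your existence argument ``the smallest biprojection dominating $R(a)$ exists because the biprojections form a finite lattice'' tacitly uses that the lattice meet of biprojections agrees with the operator-theoretic meet of their ranges (true, via Theorem \ref{bisch}, since the meet corresponds to intersecting intermediate subfactors), or one can simply invoke the assertion and citation already contained in Definition \ref{gener}. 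With those points noted, your main line of argument stands.
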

\begin{proof}
It is precisely \cite[Theorem 4.1 and Lemma 4.8]{li} for (1) and (2). Next, if $ a \preceq b$, then by (2), for any integer $ k $, $ a^{*k} \preceq b^{*k} $, and hence $\forall n $ $$ \sum_{k=1}^n a^{*k} \preceq \sum_{k=1}^n b^{*k}, $$ then $ \langle a \rangle \le \langle b \rangle $ by Definition \ref{gener}. Finally, (4) is immediate from (3).
\end{proof} 
\noindent The next lemma follows by irreducibility (i.e. $ \mathcal{P}_{1,+}=\mathbb{C} $).
\begin{lemma} \label{pre2} \label{2} 
Let $ p,q \in \mathcal{P}_{2,+} $ be projections. Then $$ e_1 \preceq p *\overline{q} \Leftrightarrow pq \neq 0. $$ 
\end{lemma}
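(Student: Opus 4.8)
The plan is to prove both implications at once by reducing the range-order condition $e_1 \preceq p * \overline q$ to the non-vanishing of a single trace, and then identifying that trace with a positive multiple of $tr(pq)$. Two ingredients drive this: the positivity of the coproduct, already recorded in Lemma \ref{th}(1), which gives $p * \overline q \ge 0$ since $p,q$ and hence $\overline q$ are positive; and the fact that, under irreducibility, the Jones projection $e_1$ is a \emph{minimal central} projection of $\mathcal{P}_{2,+}$, which is precisely what converts the support condition into a trace condition.

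First I would pin down the role of $e_1$. Identifying $\mathcal{P}_{2,+}=N'\cap M_1$ with the algebra $\operatorname{End}({}_N L^2(M)_N)$ of $N$-$N$-bimodule endomorphisms, the Jones projection $e_1$ is the orthogonal projection of $L^2(M)$ onto the trivial sub-bimodule $L^2(N)$. The multiplicity of the trivial bimodule is $\dim\operatorname{Hom}({}_N N_N,{}_N M_N)=\dim(N'\cap M)=\dim\mathcal{P}_{1,+}=1$ by irreducibility, so the corresponding matrix block of $\mathcal{P}_{2,+}$ is one-dimensional and $e_1$ is a minimal central projection. Consequently, for any positive $x\in\mathcal{P}_{2,+}$ one may write $x=\mu e_1\oplus x'$ with $\mu\ge 0$ and $x'$ supported off $e_1$; then $R(x)\ge e_1$ holds exactly when $\mu>0$, i.e. exactly when $tr(e_1 x)=\mu\,tr(e_1)\neq 0$. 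This gives the clean equivalence $e_1\preceq x \Leftrightarrow tr(e_1 x)\neq 0$ for positive $x$, in particular for $x=p*\overline q$.

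Next I would compute the pairing $tr\!\big(e_1\,(p*\overline q)\big)$ diagrammatically. Unfolding the coproduct $p*\overline q=\mathcal{F}\big(\mathcal{F}^{-1}(p)\cdot\mathcal{F}^{-1}(\overline q)\big)$ and using the standard normalisations $\mathcal{F}(id)=\delta\,e_1$ and $\mathcal{F}(e_1)=\delta^{-1}id$ (so that $\delta e_1$ is the coproduct unit), a short rotation/Frobenius-reciprocity calculation in the $2$-box space yields $tr\!\big(e_1\,(p*\overline q)\big)=\delta^{-1}\,tr(pq)$; one checks the constant on the spanning case $p=q=id$, where $id*\overline{id}=\delta\,id$ gives $\delta^{-1}=\delta^{-1}tr(id)$. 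Since $p,q$ are projections, $tr(pq)=tr\big((pq)^*(pq)\big)\ge 0$, which vanishes if and only if $pq=0$. Combining with the previous paragraph, $e_1\preceq p*\overline q \Leftrightarrow tr(e_1(p*\overline q))\neq 0 \Leftrightarrow tr(pq)\neq 0 \Leftrightarrow pq\neq 0$.

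The step I expect to be the main obstacle is the reduction in the second paragraph: the equivalence between the \emph{support} condition $e_1\le R(x)$ and the \emph{trace} condition $tr(e_1 x)\neq 0$ is false for a general minimal projection (trace non-vanishing only records non-orthogonality, not containment of ranges), and it is rescued solely by $e_1$ being \emph{central}, hence by the multiplicity-one statement coming from $\mathcal{P}_{1,+}=\mathbb{C}$. This is the precise sense in which, as stated, the lemma ``follows by irreducibility''. The remaining labor---fixing the correct positive constant in $tr(e_1(p*\overline q))=\delta^{-1}tr(pq)$---is routine planar bookkeeping once the rotation conventions are fixed.
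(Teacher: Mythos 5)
Your proof is correct and is essentially the argument the paper intends: the paper's entire justification for this lemma is the remark that it ``follows by irreducibility (i.e.\ $\mathcal{P}_{1,+}=\mathbb{C}$)'', and your route --- irreducibility forces $e_1$ to be a minimal \emph{central} projection of $\mathcal{P}_{2,+}$, whence $e_1 \preceq x$ for positive $x$ is equivalent to $tr(e_1 x)\neq 0$, and then $tr\bigl(e_1(p*\overline{q})\bigr)=\delta^{-1}tr(pq)$, which is nonnegative and vanishes exactly when $pq=0$ --- is precisely the standard way of making that one-line remark rigorous. Your constant $\delta^{-1}$ is also consistent with the paper's normalization $x*e_1=e_1*x=\delta^{-1}x$, though for the argument only its strict positivity matters.
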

\noindent Note that if $ p \in \mathcal{P}_{2,+} $ is a projection then $ \overline{p} $ is also a projection.
\begin{lemma} \label{pfr}
Let $ a,b,c \in \mathcal{P}_{2,+} $ be projections with $ c \preceq a*b $. Then $ \exists a' \preceq c*\overline{b} $ and $ \exists b' \preceq \overline{a}*c $ such that $ a' $, $ b' $ are projections and $ aa', bb' \neq 0 $.
\end{lemma}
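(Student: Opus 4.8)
The plan is to reduce everything to the positivity of a trace together with the Frobenius reciprocity of the coproduct, and to pick the most economical witnesses. We may assume $ c \neq 0 $ (otherwise $ c*\overline{b}=0 $ and no nonzero projection can meet $ a $, so the statement is meant for nonzero projections). The natural guess is to set $ a' := R(c*\overline{b}) $ and $ b' := R(\overline{a}*c) $. By Lemma \ref{th}(1) the operators $ c*\overline{b} $ and $ \overline{a}*c $ are positive, since $ \overline{a},\overline{b} $ are again projections, hence $ a',b' $ are genuine projections; moreover $ R(a')=R(c*\overline{b}) $ and $ R(b')=R(\overline{a}*c) $, so $ a'\sim c*\overline{b} $ and $ b'\sim\overline{a}*c $, and in particular $ a'\preceq c*\overline{b} $ and $ b'\preceq\overline{a}*c $ hold for free. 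It then remains only to establish the nonvanishing $ aa'\neq 0 $ and $ bb'\neq 0 $.

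First I would extract a strict trace inequality from the hypothesis. Since $ a*b\ge 0 $ has range projection $ R(a*b)\ge R(c)=c\neq 0 $, the nonzero projection $ c $ is not orthogonal to the support of $ a*b $, so $ tr((a*b)c)>0 $ (this is the only place $ c\preceq a*b $ enters, and it is exactly what upgrades an inequality of range projections into a quantitative statement). The heart of the argument is then the Frobenius reciprocity of the coproduct: there are positive constants $ \kappa,\kappa' $ with
\[ tr((a*b)c)=\kappa\,tr\bigl(a\,(c*\overline{b})\bigr)=\kappa'\,tr\bigl(b\,(\overline{a}*c)\bigr). \]
These are the planar-algebra avatars of $ \langle a*b,c\rangle=\langle a,c*\overline{b}\rangle=\langle b,\overline{a}*c\rangle $; in the group-algebra model they are a one-line reindexing of the convolution, with the contragredients $ \overline{b},\overline{a} $ playing the role of the factors being moved across the product. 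Granting them, the previous inequality forces $ tr(a\,(c*\overline{b}))>0 $ and $ tr(b\,(\overline{a}*c))>0 $.

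I expect this reciprocity to be the main obstacle. One must verify it diagrammatically: closing up the coproduct tangle defining $ a*b $ against $ c $ yields a closed diagram which, after isotopy and the two $ 90^{\circ} $ rotations built into $ \mathcal{F} $ and $ \overline{(\cdot)} $, must be recognised as precisely the tangle pairing $ a $ with $ c*\overline{b} $ (respectively $ b $ with $ \overline{a}*c $); the appearance of $ \overline{b} $ and $ \overline{a} $ is exactly the rotation needed to carry a factor from one side of the coproduct to the other. This is standard in the coproduct calculus of \cite{li}, so I would either cite it or spell out the single isotopy.

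Finally I would convert the strict inequalities back into nonvanishing products. Since $ c*\overline{b}\ge 0 $ is supported on $ R(c*\overline{b})=a' $, we have $ c*\overline{b}=a'(c*\overline{b})a' $, so $ tr(a\,(c*\overline{b}))>0 $ forces $ aa'\neq 0 $: were $ aa'=0 $, the positive operator would be sandwiched between projections orthogonal to $ a $, giving trace $ 0 $. The identical computation with $ b $ and $ b' $ yields $ bb'\neq 0 $, completing the proof. It is worth remarking that $ a' $ (resp. $ b' $) may afterwards be shrunk to any minimal subprojection of $ R(c*\overline{b}) $ (resp. $ R(\overline{a}*c) $) still meeting $ a $ (resp. $ b $), should a minimal witness be required later.
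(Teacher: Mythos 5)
Your proof is correct, and it selects exactly the witnesses the paper has in mind ($a'=R(c*\overline{b})$ and $b'=R(\overline{a}*c)$), but the detection mechanism is genuinely different. The paper never touches trace values: since $cc=c\neq 0$, Lemma \ref{pre2} gives $e_1\preceq c*\overline{c}$, and Lemma \ref{th}(2) together with associativity gives
\[ e_1 \ \preceq\ c*\overline{c}\ \preceq\ (a*b)*\overline{c}\ =\ a*(b*\overline{c})\ \sim\ a*\overline{a'} \]
(using $\overline{a'}=R\bigl(\overline{c*\overline{b}}\bigr)=R(b*\overline{c})$), whence $aa'\neq 0$ by the converse direction of Lemma \ref{pre2}; the statement for $b'$ is the mirror-image chain. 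You instead argue quantitatively: faithfulness of the trace gives $tr((a*b)c)>0$, a Frobenius-reciprocity identity transports this to $tr\bigl(a(c*\overline{b})\bigr)>0$ and $tr\bigl(b(\overline{a}*c)\bigr)>0$, and a support argument concludes. Your reciprocity identity is true and standard (your group-model reindexing is the right sanity check, and the general case is an isotopy of the closed diagram), so there is no gap; but note it is the one ingredient your argument needs that the paper neither states nor proves, so you would have to carry out the diagrammatic verification or cite it from \cite{li}. The comparison is sharpened by observing that the two mechanisms are the same fact in different clothing: by irreducibility, $e_1(p*\overline{q})e_1$ is a positive multiple of $tr(pq)\,e_1$, so Lemma \ref{pre2} \emph{is} your reciprocity identity read off at the $e_1$-coefficient. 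Consequently you could discharge your only unproven step with no diagrammatics at all, by applying Lemma \ref{pre2} to the pairs $(a,a')$ and $(b,b')$ and verifying $e_1\preceq a*\overline{a'}$ via the displayed chain — at which point your proof collapses into the paper's. What your version buys in exchange is a slightly stronger, quantitative conclusion ($tr\bigl(a(c*\overline{b})\bigr)>0$ rather than mere non-vanishing of $aa'$), at the price of invoking machinery external to the paper.
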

\begin{proof} By using Lemmas \ref{th} and \ref{pre2}, and
 $$ e_1 \preceq c*\overline{c} \preceq (a*b)* \overline{c} = a*(b* \overline{c}). $$ 
We can also apply \cite[Lemma 4.10]{li}.
\end{proof}
\subsection{On the biprojections}
\begin{definition}[\cite{li}, Definition 2.14]
A biprojection is a projection $ b \in \mathcal{P}_{2,\pm} $ with $ \mathcal{F}(b) $ a multiple of a projection. 
\end{definition}
Note that $ e_1=e^M_N $ and $ id=e^M_M $ are biprojections.
\begin{theorem}[\cite{bi} p212] \label{bisch}
A projection $ b $ is a biprojection if and only if it is the Jones projection $ e^M_K $ of an intermediate subfactor $ N \subseteq K \subseteq M $. 
\end{theorem}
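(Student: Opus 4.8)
The plan is to establish the two implications separately, starting with the easier one.

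\smallskip
\noindent \emph{($\Leftarrow$) An intermediate subfactor yields a biprojection.} Given $N \subseteq K \subseteq M$, the Jones projection $e^M_K$ lies in $N' \cap M_1 = \mathcal{P}_{2,+}$ and is a projection by construction. To see it is a biprojection I would compute $\mathcal{F}(e^M_K)$ directly from the diagram: since $\mathcal{F}$ is the $90^{\circ}$ rotation of the $2$-box, applying it to the cup--cap tangle defining $e^M_K$ produces, up to the scalar coming from the trace normalisation, the Jones projection of the intermediate subfactor $M \subseteq \langle M, e_K \rangle \subseteq M_1$ arising in the basic construction. Being a scalar multiple of a projection, this shows $e^M_K$ is a biprojection.

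\smallskip
\noindent \emph{($\Rightarrow$) A biprojection is a Jones projection.} The first step is to reformulate the Fourier condition algebraically as co-idempotence. Writing $\mathcal{F}(b) = \mu p$ with $p$ a projection and $\mu > 0$, one has $\mathcal{F}^{-1}(b) = \overline{\mathcal{F}(b)} = \mu\, \overline{p}$, which is again a multiple of a projection since $\overline{p}$ is a projection; hence
\[ \mathcal{F}^{-1}(b)\cdot \mathcal{F}^{-1}(b) = \mu\, \mathcal{F}^{-1}(b), \qquad b * b = \mathcal{F}\left( \mathcal{F}^{-1}(b)\cdot \mathcal{F}^{-1}(b) \right) = \mu\, b. \]
Thus a biprojection is co-idempotent up to a scalar. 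Using Lemma \ref{pre2} with $p = q = b$ gives $e_1 \preceq b * \overline{b}$ (as $b \ne 0$), and combining this with co-idempotence and the self-duality of the biprojection condition one deduces $e_1 \le b$.

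\smallskip
\noindent The second step is to recover an intermediate algebra. With $e_1 \le b$ the projection $b$ implements a normal conditional-expectation-type map onto a subspace $V$ with $N \subseteq V \subseteq M$; the relation $b * b = \mu b$ says precisely that $V$ is stable under the multiplication of $M$, while $b = b^*$ together with $\overline{b} \sim b$ gives stability under the adjoint, and $e_1 \le b$ gives $1 \in V$ and $N \subseteq V$. Hence $V$ is a von Neumann subalgebra with $N \subseteq V \subseteq M$; by finite index and irreducibility it is an intermediate subfactor $K$, and the uniqueness of the Jones projection forces $b = e^M_K$.

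\smallskip
\noindent I expect the main obstacle to be the second step: turning the purely diagrammatic co-idempotence $b * b = \mu b$ into the genuine statement that $V$ is a \emph{weakly closed, multiplicatively closed} $*$-subalgebra rather than a mere operator system. This requires matching the coproduct $*$ with the multiplication of $M$ under the range-projection dictionary and checking weak closure, and it is the technical heart of Bisch's theorem; establishing $e_1 \le b$ (equivalently, the self-duality $\overline{b} \sim b$) cleanly is the subtler point feeding into it.
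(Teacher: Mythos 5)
First, note that the paper itself contains no proof of this statement: Theorem \ref{bisch} is imported wholesale from Bisch \cite{bi} (p.~212), and the algebraic characterization you partially try to rebuild along the way is stated immediately afterwards as Theorem \ref{biproj}, again only by citation to Landau \cite{la} and Liu \cite{li}. Your outline does follow the architecture of the actual proof behind these citations --- the backward direction by rotating the diagram of $e^M_K$, the forward direction by turning the range of $b$ into an intermediate von Neumann algebra --- and your derivation of co-idempotence $b*b=\mu b$ from $\mathcal{F}^{-1}(b)=\overline{\mathcal{F}(b)}$ is correct. But as a proof your proposal has two genuine gaps.

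The first gap is that your deduction of $e_1 \le b$ is circular. Lemma \ref{pre2} with $p=q=b$ does give $e_1 \preceq b * \overline{b}$, but the relation you established concerns $b*b$, not $b*\overline{b}$; to pass from one to the other you need $\overline{b}=b$, which you invoke as ``the self-duality of the biprojection condition''. Self-duality is not part of the paper's definition of a biprojection (a projection whose Fourier transform is a multiple of a projection); it is one of the nontrivial conclusions of Theorem \ref{biproj}, i.e.\ exactly the kind of statement a self-contained proof must establish. Equivalently, applying Lemma \ref{pre2} to the pair $(b,\overline{b})$ and using $\overline{\overline{b}}=b$ shows that $e_1 \preceq b*b=\mu b$ if and only if $b\,\overline{b} \neq 0$, and nothing in your first step rules out $b\,\overline{b}=0$; excluding that degenerate case is where the positivity and trace arguments of Landau--Liu enter. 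The second gap you flag yourself: identifying $V=\{x \in M : b(x\Omega)=x\Omega\}$ as a \emph{weakly closed} unital $\star$-subalgebra and proving that $\mathrm{range}(b)=L^2(V)$, so that $b$ is literally the Jones projection $e^M_V$, is precisely the operator-algebraic content of Bisch's theorem (matching the coproduct with multiplication on $L^2(M)$, closure under adjoints, weak closure); your proposal names these steps but does not carry any of them out. So what you have is a correct road map of Bisch's argument rather than a proof: to make it rigorous you would either import Theorem \ref{biproj} as a black box for the first step and then do the von Neumann algebra work of the second, or simply cite \cite{bi} as the paper does.
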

Therefore the set of biprojections is a lattice of the form $ [e_1,id] $.
 \begin{theorem} \label{biproj} An operator $ b $ is a biprojection if and only if
 $$ e_1 \le b=b^2=b^{\star}=\overline{b} = \lambda b * b, \text{ with } \lambda^{-1} = \delta tr(b). $$ 
\end{theorem}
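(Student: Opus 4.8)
The plan is to prove the equivalence entirely within the Fourier/coproduct calculus of the $2$-box space, the central tool being that the Fourier transform turns ordinary multiplication into the coproduct. Before addressing either implication I would record the three structural identities that drive everything: (i) $\overline{a} = \mathcal{F}^{2}(a)$ and $\mathcal{F}^{4} = \mathrm{id}$, so that $\overline{b} = b$ is equivalent to $\mathcal{F}(b) = \mathcal{F}^{-1}(b)$; (ii) $\mathcal{F}^{-1}(a * b) = \mathcal{F}^{-1}(a)\,\mathcal{F}^{-1}(b)$, i.e. $\mathcal{F}^{-1}$ is an algebra map from $(\mathcal{P}_{2,+},*)$ to $(\mathcal{P}_{2,-},\cdot)$; and (iii) the adjoint compatibility $\mathcal{F}(a)^{\star} = \mathcal{F}^{-1}(a^{\star})$. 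I would also establish the trace--coproduct formula $tr(a*b) = \delta\, tr(a)\,tr(b)$, which I can verify on the generators using $\mathcal{F}(id) = \delta\, e_{1}$ and $\mathcal{F}(e_{1}) = \delta^{-1} id$ (whence $e_{1}*e_{1} = \delta^{-1}e_{1}$ and $id * id = \delta\, id$).

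For the implication ``the listed relations $\Rightarrow$ biprojection'', I set $q := \mathcal{F}(b)$. Using (i) together with $\overline{b} = b$ gives $q = \mathcal{F}^{-1}(b)$; applying $\mathcal{F}^{-1}$ to $\lambda\, b*b = b$ and invoking (ii) yields $\lambda q^{2} = q$, so $\lambda q$ is idempotent. By (iii) and $b^{\star} = b$ we get $q^{\star} = \mathcal{F}^{-1}(b^{\star}) = \mathcal{F}^{-1}(b) = q$, so $q$ is self-adjoint; since $\lambda^{-1} = \delta\, tr(b) > 0$ is a positive real, $\lambda q$ is a self-adjoint idempotent, hence a projection. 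Thus $\mathcal{F}(b) = \lambda^{-1}(\lambda q)$ is a multiple of a projection, and $b$, being already a projection, is a biprojection.

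For the converse, let $b = e^{M}_{K}$ be a biprojection (Theorem \ref{bisch}). Then $b = b^{2} = b^{\star}$ is automatic, and $e_{1} = e^{M}_{N} \le e^{M}_{K} = b$ follows from $N \subseteq K \subseteq M$. To obtain $\overline{b} = b$, I would write $\mathcal{F}(b) = \mu p$ with $p$ a projection; reality of $\mu$ (seen from $tr(\mathcal{F}(b)) = \mu\, tr(p)$ being real) makes $\mathcal{F}(b)$ self-adjoint, so by (iii) $\mathcal{F}(b) = \mathcal{F}(b)^{\star} = \mathcal{F}^{-1}(b)$, i.e. $\mathcal{F}^{2}(b) = b$. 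Finally, applying $\mathcal{F}^{-1}$ and using $\mathcal{F}^{-1}(b) = \mathcal{F}(b) = \mu p$ gives $\mathcal{F}^{-1}(b*b) = (\mu p)^{2} = \mu(\mu p) = \mu\,\mathcal{F}^{-1}(b)$, so $b*b = \mu\, b$; taking the trace and invoking $tr(a*b) = \delta\, tr(a)\,tr(b)$ forces $\mu = \delta\, tr(b) = \lambda^{-1}$.

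The conceptual content is light --- an operator that is idempotent after rescaling and self-adjoint is a projection --- so the main obstacle is the bookkeeping of planar-algebra conventions: fixing the normalizations of $\mathcal{F}$, $*$ and $tr$ so that the adjoint identity (iii) and the trace formula carry the correct constants, and arguing that the scalar $\mu$ (equivalently $\lambda^{-1}$) is a genuine positive real rather than merely nonzero, which is precisely what licenses the ``self-adjoint idempotent $=$ projection'' step. Since most of these normalization facts are available in \cite{li}, I would cite them rather than recompute them from the diagrams.
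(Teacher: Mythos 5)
Your overall strategy is sound, and it is genuinely different from what the paper does: the paper's ``proof'' of Theorem \ref{biproj} is a pure citation of \cite[items 0-3 p191]{la} and \cite[Theorem 4.12]{li}, whereas you actually derive both implications inside the Fourier calculus, using only the definition of the coproduct, $\mathcal{F}^4 = \mathrm{id}$ on $2$-boxes, the compatibility $\mathcal{F}(a)^{\star} = \mathcal{F}^{-1}(a^{\star})$, Bisch's characterization (Theorem \ref{bisch}) to get $e_1 \le b$, and a trace--coproduct identity. Your forward implication is complete as written: from $\overline{b}=b$ one gets $q:=\mathcal{F}(b)=\mathcal{F}^{-1}(b)$, the relation $b=\lambda\, b*b$ transports to $\lambda q^2=q$, self-adjointness of $q$ follows from $b^{\star}=b$, and positivity of $\lambda^{-1}=\delta\, tr(b)$ (which uses $e_1\le b$, so $tr(b)>0$) makes $\lambda q$ a self-adjoint idempotent, hence a projection. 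What your route buys is transparency about where each hypothesis is used; what the paper's route buys is that all normalization conventions are delegated to \cite{la} and \cite{li}, where they are handled carefully.

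Two spots in your converse need more than you give them. First, the identity $tr(a*b)=\delta\, tr(a)\, tr(b)$ cannot be ``verified on the generators'': it is a bilinear identity on all of $\mathcal{P}_{2,+}\times\mathcal{P}_{2,+}$, and checking it on $e_1$ and $id$ proves nothing unless the $2$-box space is Temperley--Lieb. The honest argument is exactly where irreducibility enters: since $\mathcal{P}_{1,+}=\mathbb{C}$, the one-strand partial closure of a $2$-box $x$ is the scalar $\delta\, tr(x)$ times a strand, and the full trace closure of $a*b$ decomposes into the two partial closures of $a$ and $b$ joined by a single loop, giving $tr(a*b)=\delta\, tr(a)\, tr(b)$; this formula fails for non-irreducible planar algebras, so you should either give this diagrammatic argument or cite it, not check examples. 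Second, the reality of $\mu$ is asserted via ``$tr(\mathcal{F}(b))=\mu\, tr(p)$ being real'', but reality of $tr(\mathcal{F}(b))$ is not automatic (it fails already for $i\cdot id$); it needs an argument, e.g.\ the identity $tr(\mathcal{F}(x))=\delta\, tr(x e_1)$ (the trace closure of the rotated box is the cup--cap closure of $x$), which for the self-adjoint $b\ge e_1$ gives $tr(\mathcal{F}(b))=\delta\, tr(e_1)=\delta^{-1}>0$; alternatively, combine $\mathcal{F}(b)^{\star}=\mathcal{F}^{-1}(b)=\overline{\mathcal{F}(b)}$ with trace-invariance under the contragredient to get $\bar{\mu}\, tr(p)=\mu\, tr(\overline{p})=\mu\, tr(p)$. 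Both fixes are one or two lines, so the proposal is correct in substance, but as written these two steps are gaps rather than citations.
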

\begin{proof} See \cite[items 0-3 p191]{la} and \cite[Theorem 4.12]{li}.
\end{proof}
\begin{lemma} \label{prodcoprod} Consider $ a_1,a_2,b \in \mathcal{P}_{2,+} $ with $ b $ a biprojection. Then 
 $$ (b \cdot a_1 \cdot b) * (b \cdot a_2 \cdot b) = b \cdot (a_1 * (b \cdot a_2 \cdot b)) \cdot b = b \cdot ((b \cdot a_1 \cdot b) * a_2) \cdot b, $$ 
 $$ (b*a_1*b) \cdot (b*a_2*b) = b*(a_1 \cdot (b*a_2*b))*b = b*((b*a_1*b) \cdot a_2)*b. $$ 
\end{lemma}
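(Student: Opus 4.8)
The plan is to prove only the first display and deduce the second by Fourier duality. The transform $ \mathcal{F} $ is an isomorphism onto the dual $ 2 $-box space under which the product $ \cdot $ and the coproduct $ * $ are interchanged (this is immediate from the defining formula $ a*b = \mathcal{F}(\mathcal{F}^{-1}(a)\cdot\mathcal{F}^{-1}(b)) $) and under which biprojections correspond to biprojections up to the scalar of Theorem \ref{biproj}; reading the first display in the dual therefore turns every $ \cdot $ into a $ * $ and conversely, which is exactly the second display. Moreover, inside the first display the two equalities are symmetric under swapping the roles of the two tensor factors $ a_1 $ and $ a_2 $: the middle expression is obtained from the left one by absorbing the inner biprojections of the first factor, and the right expression by absorbing those of the second factor. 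Hence everything reduces to the single identity
\[
(b\cdot a_1\cdot b)*(b\cdot a_2\cdot b) = b\cdot\bigl(a_1*(b\cdot a_2\cdot b)\bigr)\cdot b .
\]

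To prove this, set $ g_2 := b\cdot a_2\cdot b $. Since $ b=b^2 $ by Theorem \ref{biproj}, the factor $ g_2 $ is \emph{two-sided $ b $-invariant}: $ b\cdot g_2 = g_2 = g_2\cdot b $. The heart of the matter is a pull-through property of a biprojection, namely that such a $ b $-invariant factor can be slid across the coproduct and extracted from the outer $ b $-$ b $ sandwich, so that the two sides reduce to a common tangle. The group model $ \mathcal{P}(R^G\subseteq R) $, in which $ \cdot $ is convolution, $ * $ is the pointwise product and $ b=\tfrac{1}{|H|}\mathbf{1}_H $, makes the mechanism transparent: here $ g_2 $ is bi-$ H $-invariant, hence constant on the double cosets over which the averaging $ b\cdot(-)\cdot b $ ranges, so $ g_2 $ factors out of that average and what remains is precisely the average defining $ b\cdot a_1\cdot b $; this returns $ b\cdot a_1\cdot b $ times the pointwise factor, i.e. the left-hand side. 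I would carry out the general case by the same bookkeeping, using $ g_2=b\cdot g_2\cdot b $ and the co-idempotency $ \lambda\,b*b=b $ to create and destroy the mediating biprojection.

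The hard part will be making this pull-through rigorous in the abstract planar algebra, where $ \cdot $ and $ * $ obey no naive distributive law and all the content lies in the interaction of $ b $ with both products. I would establish it diagrammatically from the characterizing relations $ e_1\le b=b^2=b^{\star}=\overline{b}=\lambda\,b*b $ of Theorem \ref{biproj}: drawing $ * $ and $ \cdot $ as the two standard multiplication tangles and $ b $ as the Jones projection $ e^M_K $ of Theorem \ref{bisch}, the $ b $-invariance $ b\cdot g_2=g_2=g_2\cdot b $ lets a strand be routed through $ b $, while $ \lambda\,b*b=b $ lets the mediating biprojection be inserted or removed, so that the tangles for the two sides become isotopic. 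This is exactly the type of exchange/capping identity for biprojections worked out in \cite{li} (see also \cite{la}), to which the verification can be reduced.
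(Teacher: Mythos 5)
Your proposal is, in substance, the paper's own proof: the paper disposes of this lemma in one line, namely ``By exchange relations \cite{la} for $b$ and $\mathcal{F}(b)$''. Your final step --- reducing the pull-through identity to the exchange relation for biprojections worked out in \cite{li} and \cite{la} --- is precisely that citation, and your Fourier-duality reduction of the second display to the first is precisely the ``and $\mathcal{F}(b)$'' part. That duality step is legitimate, but note what it silently uses: $\mathcal{F}(b)$ normalized by $(\delta\, tr(b))^{-1}$ is again a biprojection (in $\mathcal{P}_{2,-}$); each of the six expressions in the lemma is homogeneous of degree $4$ in $b$, so this scalar cancels; and $\mathcal{F}$ is order-reversing on one of the two operations ($\mathcal{F}(x\cdot y)=\mathcal{F}(x)*\mathcal{F}(y)$ but $\mathcal{F}(x*y)=\mathcal{F}(y)\cdot\mathcal{F}(x)$), which is harmless here only because $b$ always appears in symmetric sandwiches $b\cdots b$. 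Your group-model computation is correct and is the right intuition. The one step that is wrong as stated is reducing the right-hand equality to the middle one by ``swapping the roles of $a_1$ and $a_2$'': neither $\cdot$ nor $*$ is commutative, so a literal swap proves nothing. The correct patch is the contragredient (rotation by $\pi$): since $\overline{x\cdot y}=\overline{y}\cdot\overline{x}$, $\overline{x*y}=\overline{y}*\overline{x}$ and $\overline{b}=b$ (Theorem \ref{biproj}), applying $x\mapsto\overline{x}$ to the first equality evaluated at the pair $(\overline{a_2},\overline{a_1})$ yields exactly the second equality at $(a_1,a_2)$. With that substitution your outline closes, but it coincides with the paper's argument rather than offering an alternative: in both, the actual content --- the exchange relation itself --- is outsourced to \cite{la} and \cite{li}.
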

\begin{proof} 
By exchange relations \cite{la} for $ b $ and $ \mathcal{F}(b) $.
 \end{proof}
 
\begin{definition} \label{gener}
Consider $a \in \mathcal{P}_{2,+}$ positive, and let $p_n$ be the range projection of $\sum_{k=1}^n a^{*k}$. By finiteness, there exists $N$ such that for all $m \ge N$, $p_m = p_N$, which is a biprojection \cite[Lemma 4.14]{li}, denoted $\langle a \rangle$, called the biprojection generated by $a$. It is the smallest biprojection $b \succeq a$. For $ S $ a finite set of positive operators, let $ \langle S \rangle $ be the biprojection $\langle \sum_{s \in S}s \rangle$, it is the smallest biprojection $b$ such that $ b \succeq s $, $ \forall s\in S $.
\end{definition}
\subsection{Intermediate planar algebras and $ 2 $-box spaces} \label{interpa} \hspace*{1cm} \\
Let $ N \subseteq K \subseteq M $ be an intermediate subfactor. The planar algebras $ \mathcal{P}(N \subseteq K) $ and $ \mathcal{P}(K \subseteq M) $ can be derived from $ \mathcal{P}(N \subseteq M) $, see \cites{kcb,la0}. 
\begin{theorem} \label{2iso} \label{landau1} \label{landau2}
Consider the intermediate subfactors $$ N \subseteq P \subseteq K \subseteq Q \subseteq M. $$ Then there are two isomorphisms of von Neumann algebras $$ l_K: \mathcal{P}_{2,+}(N \subseteq K) \to e^M_K \mathcal{P}_{2,+}(N \subseteq M) e^M_K, $$ 
 $$ r_K : \mathcal{P}_{2,+} (K \subseteq M) \to e^M_K * \mathcal{P}_{2,+}(N \subseteq M) * e^M_K, $$ for usual $ + $, $ \times $ and $ ()^{\star} $, such that
 $$ l_K(e^K_P) = e^M_P \text{ and } r_K(e^M_Q) = e^M_Q. $$ 
 Moreover, the coproduct $ * $ is also preserved by these maps, but up to a multiplicative constant, $ |M:K|^{1/2} $ for $ l_K $ and $ |K:N|^{-1/2} $ for $ r_K $.
 Then, $ \forall m \in \{l^{\pm 1}_K, r^{\pm 1}_K \} $, $ \forall a_i > 0 $ in the domain of $ m $, $ m(a_i)>0 $ and $$ \langle m(a_1), \dots, m(a_n) \rangle = m (\langle a_1, \dots, a_n \rangle). $$ 
\end{theorem}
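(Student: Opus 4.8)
The plan is to take the existence of $ l_K $ and $ r_K $ as von Neumann algebra isomorphisms, together with the matching of Jones projections, essentially from the construction of intermediate planar algebras in \cites{kcb,la0}, and then to carry out by hand the two genuinely analytic verifications: the behaviour of the coproduct, and the compatibility with the generation map $ \langle\,\cdot\,\rangle $. First I would recall that $ e^M_K\,\mathcal{P}_{2,+}(N\subseteq M)\,e^M_K $ is a unital von Neumann subalgebra (a corner, with unit $ e^M_K $) of $ \mathcal{P}_{2,+}(N\subseteq M) $, and that the basic-construction description of $ \mathcal{P}(N\subseteq K) $ identifies its $ 2 $-box space, as a von Neumann algebra for $ + $, $ \times $, $ ()^{\star} $, with this corner; this is exactly the content of the cited references and supplies $ l_K $. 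Dually, since the Fourier transform $ \mathcal{F} $ exchanges product and coproduct and sends the biprojection $ e^M_K $ to a multiple of a projection, the same statement applied to $ K\subseteq M $ furnishes $ r_K $ onto the coproduct corner $ e^M_K * \mathcal{P}_{2,+}(N\subseteq M) * e^M_K $. The identities $ l_K(e^K_P)=e^M_P $ and $ r_K(e^M_Q)=e^M_Q $ hold because, by Theorem \ref{bisch}, in each case both sides are the Jones projection attached to the single intermediate subfactor ($ P $, resp. $ Q $) that indexes them, and the construction is natural in that intermediate subfactor.

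Next, for the coproduct I would use the definition $ a*b=\mathcal{F}(\mathcal{F}^{-1}(a)\cdot\mathcal{F}^{-1}(b)) $ together with the exchange relations of Lemma \ref{prodcoprod}, tracking the three moduli via multiplicativity of the index, $ |M:N|=|M:K|\cdot|K:N| $. Since $ l_K $ intertwines the ordinary products exactly, whereas the Fourier transforms of $ (N\subseteq K) $ and $ (N\subseteq M) $ differ by their trace normalisations, the coproduct is preserved only up to the scalar $ |M:K|^{1/2} $; the dual bookkeeping for $ r_K $, where the roles of product and coproduct are swapped, produces the scalar $ |K:N|^{-1/2} $. I expect this normalisation computation to be the main obstacle, as it requires a careful comparison of the Fourier transforms, traces and $ \delta $'s of the three planar algebras; it can be pinned down by evaluating the constant on a single convenient element (for instance on $ e_1 $ or on the relevant Jones projections), since both sides are bilinear.

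Finally, the compatibility with $ \langle\,\cdot\,\rangle $ follows formally. Writing $ m\in\{l_K^{\pm1},r_K^{\pm1}\} $, positivity of $ m(a_i) $ is immediate because $ m $ is a $ \star $-isomorphism of von Neumann algebras. Letting $ c>0 $ be the coproduct constant of $ m $, an induction using $ a^{*k}=a*a^{*(k-1)} $ gives $ m(a^{*k})=c^{\,k-1}\,m(a)^{*k} $, so $ m\big(\sum_{k=1}^n a^{*k}\big)=\sum_{k=1}^n c^{\,k-1}\,m(a)^{*k} $. The key observation is that for positive operators and strictly positive weights the range projection is insensitive to the weights, $ R\big(\sum_k c^{\,k-1}x_k\big)=R\big(\sum_k x_k\big) $, because the kernel equals $ \bigcap_k\ker x_k $ regardless; combined with the fact that a von Neumann isomorphism preserves range projections, $ m(R(y))=R(m(y)) $, this yields $ m\big(R(\sum_{k=1}^n a^{*k})\big)=R\big(\sum_{k=1}^n m(a)^{*k}\big) $. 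Taking $ n $ past the stabilisation point of Definition \ref{gener} for both $ a $ and $ m(a) $ gives $ m(\langle a\rangle)=\langle m(a)\rangle $, and the multivariable statement follows since $ \langle a_1,\dots,a_n\rangle=\langle\sum_i a_i\rangle $ and $ m $ is additive.
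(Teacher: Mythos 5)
Your proposal takes essentially the same route as the paper, whose entire proof is ``Immediate from \cite{kcb} or \cite{la0}, using Lemma \ref{prodcoprod}'': you likewise delegate the existence of $l_K$ and $r_K$, the matching of Jones projections, and the coproduct normalisation constants to those references together with the exchange relations. Your closing verification that $m(\langle a\rangle)=\langle m(a)\rangle$ (positive weights do not change the range projection of a sum of positive operators, $\star$-isomorphisms preserve range projections, then stabilise as in Definition \ref{gener} and use additivity for the multivariable case) is a correct spelling-out of what the paper leaves implicit.
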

\begin{proof} 
Immediate from \cite{kcb} or \cite{la0}, using Lemma \ref{prodcoprod}. \end{proof}

 
\begin{notations} Let $ b_1 \le b \le b_2 $ be the biprojections $ e^M_P \le e^M_K \le e^M_Q $. We define $ l_b:=l_K $ and $ r_b:=r_K $; also $ \mathcal{P}(b_1, b_2):= \mathcal{P}(P \subseteq Q) $ and $$ |b_2:b_1|:=tr(b_2)/tr(b_1)=|Q:P|. $$ 
\end{notations}
\section{Ore's theorem on subfactor planar algebras}
\subsection{The cyclic subfactor planar algebras} \label{cycl} \hspace*{1cm} \\
\noindent In this subsection, we define the class of cyclic subfactor planar algebras, we show that it contains plenty of examples, and we prove that it is stable under dual, intermediate, free composition and certain tensor products. Let $ \mathcal{P} $ be an irreducible subfactor planar algebra.
\begin{definition}[\cite{teru}] \label{normal} 
A biprojection $ b $ is normal if it is bicentral (i.e. $ b $ and $ \mathcal{F}(b) $ are central).
\end{definition} 
\begin{definition}
An irreducible subfactor planar algebra is said to be
\begin{itemize}
\item distributive if its biprojection lattice is distributive.
\item Dedekind if all its biprojections are normal.
\item cyclic if it is both Dedekind and distributive.
\end{itemize}
Moreover, we call a subfactor cyclic if its planar algebra is cyclic.
\end{definition} 
\begin{examples} A group subfactor is cyclic if and only if the group is cyclic; every maximal subfactor is cyclic, in particular every $ 2 $-supertransitive subfactor, as the Haagerup subfactor \cite{asha,izha,ep}, is cyclic. Up to equivalence, exactly $ 23279 $ among $ 34503 $ inclusions of groups of index $ < 30 $, give a cyclic subfactor (more than $ 65\% $).
\end{examples}
\begin{definition} \label{equiv} Let $ G $ be a finite group and $ H $ a subgroup. The core $ H_G $ is the largest normal subgroup of $ G $ contained in $ H $. The subgroup $ H $ is called core-free if $ H_G = \{ 1 \} $; in this case the interval $ [H,G] $ is also called core-free. Two intervals of finite groups $ [A, B] $ and $ [C,D] $ are called equivalent if there is a group isomorphism $ \phi: B/A_B \to D/C_D $ such that $ \phi(A/A_B) = C/C_D $. 
\end{definition}
\begin{remark} A finite group subfactor remembers the group \cite{jo}, but a finite group-subgroup subfactor does not remember the equivalence class of the interval in general. A counterexample was found by V.S. Sunder and V. Kodiyalam \cite{sk}, the intervals $ [\langle (1234) \rangle , S_4] $ and $ [\langle (12)(34) \rangle , S_4] $ are not equivalent whereas their corresponding subfactors are isomorphic; but thanks to the complete characterization \cite{iz} by M. Izumi, it remembers the interval in the maximal case, because the intersection of a core-free maximal subgroup with an abelian normal subgroup is trivial. \end{remark}
\begin{theorem} \label{th2}
 The free composition of irreducible finite index subfactors has no extra intermediate.
\end{theorem}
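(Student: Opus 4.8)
The plan is to reformulate everything in terms of biprojections. By Theorem~\ref{bisch} the intermediate subfactors of the free composition $\mathcal{P}(N \subseteq M)$ of $(N \subseteq P)$ and $(P \subseteq M)$ are exactly the biprojections in $[e_1,id]$, and the middle intermediate $P$ gives a biprojection $b = e^M_P$. Using the isomorphisms $l_b$ and $r_b$ of Theorem~\ref{2iso}, the sublattices $[e_1,b]$ and $[b,id]$ are identified with the biprojection lattices of the two constituents $\mathcal{P}(N \subseteq P)$ and $\mathcal{P}(P \subseteq M)$. Thus ``no extra intermediate'' is equivalent to the assertion that $b$ is comparable to every biprojection $b'$, i.e.\ that the whole lattice is the concatenation (ordinal sum) of $[e_1,b]$ and $[b,id]$. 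So it suffices to prove: \emph{for the free composition, every biprojection is $\le b$ or $\ge b$.}

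The main tool is the freeness defining the composition, which I would phrase at the level of the $2$-box space as a collapse relation. Writing ``lower'' for the range of $l_b$ (operators supported under $b$) and ``upper'' for the range of $r_b$ (operators supported under $\mathcal{F}(b)$ for the coproduct), freeness says that the $b$-compression of a centered upper operator is scalar, i.e.\ $b \cdot y \cdot b$ is a multiple of $b$ whenever $y$ lives in the upper part, together with the $\mathcal{F}$-dual statement $b * x * b \in \mathbb{C} b$ for lower $x$. Combined with the exchange relations of Lemma~\ref{prodcoprod}, these identities let one compute products and coproducts of mixed (alternating) words in lower and upper operators.

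Given an arbitrary biprojection $b'$, I would set $c = b \wedge b'$ and study the position of $b'$ relative to $b$ through $c$. Using that $b'$ is a biprojection (so $\lambda\, b' * b' = b'$ by Theorem~\ref{biproj}) and that $\langle \cdot \rangle$ is monotone and transported by the isomorphisms $l_b, r_b$ (Lemma~\ref{th} and Theorem~\ref{2iso}), I would split $b'$ into a part lying in $[c,b]$ and a genuinely upper contribution. Lemmas~\ref{pre2} and \ref{pfr} then convert the non-vanishing of the relevant products and coproducts into domination relations of the form $b \preceq b'$ (equivalently $b \le b'$) as soon as $b'$ has any upper content, while the dual argument gives $b' \le b$ when it has none; comparability follows.

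The hard part will be making the lower/upper decomposition of a \emph{range} projection rigorous in the non-normal setting: the collapse relation is an identity about arbitrary $2$-box operators, whereas a biprojection is controlled only through its range projection and its behaviour under $*$. The delicate step is therefore to show that an incomparable $b'$ produces a genuine mixed word whose $b$-compression is forced by freeness to collapse, contradicting the biprojection relation $\lambda\, b' * b' = b'$ unless $b'$ is comparable to $b$. Controlling the range projections through this collapse, rather than the operators themselves, is precisely where the coproduct machinery of Lemmas~\ref{th}, \ref{pre2} and \ref{pfr} must be used most carefully.
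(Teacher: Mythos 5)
Your reduction is fine as far as it goes: by Theorem \ref{bisch} and Theorem \ref{2iso}, ``no extra intermediate'' is indeed equivalent to saying that every biprojection is comparable with $b=e^M_P$. The fatal problem is your ``main tool''. The collapse relation you attribute to freeness --- $b\cdot y\cdot b\in\mathbb{C}b$ for $y$ in the image of $r_b$, and dually $b*x*b\in\mathbb{C}b$ for $x$ in the image of $l_b$ --- is not a consequence of freeness at all: it holds for \emph{every} biprojection in \emph{every} irreducible subfactor planar algebra. Indeed, if $y$ lies in the canonical copy $K'\cap\langle M,e^M_K\rangle$ of $\mathcal{P}_{2,+}(K\subseteq M)$, writing $y$ as a combination of elements $a\,e^M_K\,c$ with $a,c\in M$ gives $byb=z\,e^M_K$ with $z\in K$, and since $y$ commutes with $K$ and $K$ is a factor, $z\in K'\cap K=\mathbb{C}$; the statement for $l_b$ is its Fourier dual. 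Equivalently, $b\mathcal{P}_{2,+}b\cap(b*\mathcal{P}_{2,+}*b)=\mathbb{C}b$ always, by irreducibility. Since every ingredient you invoke (this relation, Lemma \ref{prodcoprod}, Lemmas \ref{th}, \ref{pre2}, \ref{pfr}, Theorem \ref{biproj}) is valid for an arbitrary composition, a proof built only from them would show that \emph{every} composed inclusion of irreducible finite index subfactors has no extra intermediate. That is false: in $R\subset R\rtimes\mathbb{Z}/2\subset R\rtimes(\mathbb{Z}/2\times\mathbb{Z}/2)$, both constituents are irreducible of finite index, yet the big inclusion has two further intermediate subfactors incomparable with the middle one. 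So your scheme cannot close, no matter how carefully the range projections are handled.

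What is missing is any actual use of freeness. Freeness in the Bisch--Jones sense means that $\mathcal{P}(N\subseteq M)$ is the free product of the two constituent planar algebras, with its basis of alternating, non-crossing colored diagrams; already at the $2$-box level this yields the genuinely non-automatic spanning property $\mathcal{P}_{2,+}=b\mathcal{P}_{2,+}b+b*\mathcal{P}_{2,+}*b$, which fails in the counterexample above (there the sum has dimension $3$ inside a $4$-dimensional $2$-box space). Your final paragraph, which defers ``the hard part'' of showing that an incomparable $b'$ contradicts freeness, is therefore not a delicate last step but the entire content of the theorem. Note that the paper itself does not give an argument either: its proof is a citation to \cite[Theorem 2.22]{li}, where the result is established by working inside the free product planar algebra and exploiting that diagrammatic structure. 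To repair your proposal you would have to replace the vacuous collapse relation by a true consequence of the free product decomposition (such as the spanning property above, or the uniqueness of reduced alternating words) and only then run your comparability argument; as written, the proposal has a genuine gap.
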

\begin{proof}
See \cite[Theorem 2.22]{li}. 
\end{proof}
\begin{corollary} \label{corofree}
 The class of finite index irreducible cyclic subfactors is stable under free composition.
\end{corollary}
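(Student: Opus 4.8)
The plan is to reduce the statement to the lattice-theoretic input already in hand, namely Theorem~\ref{th2} and Remark~\ref{distable}, leaving only the stability of the Dedekind condition to be argued by hand. Let $\mathcal{P}$ be the free composition of two finite index irreducible cyclic subfactor planar algebras $\mathcal{P}_1$ and $\mathcal{P}_2$, realized by a tower $N \subseteq M \subseteq L$ with $(N \subseteq M)$ producing $\mathcal{P}_1$ and $(M \subseteq L)$ producing $\mathcal{P}_2$. First I would record that $\mathcal{P}$ is again a legitimate object of the class: the index multiplies, so $\mathcal{P}$ is of finite index, and irreducibility of a free composition of irreducible subfactors is built into the construction. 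So it makes sense to ask whether $\mathcal{P}$ is cyclic, i.e. both distributive and Dedekind.

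Next I would treat distributivity. By Theorem~\ref{th2} the free composition has no extra intermediate, so every biprojection of $\mathcal{P}$ is comparable with $b_0 := e^L_M$, lying either in $[e_1,b_0]$ or in $[b_0,id]$. Via Theorem~\ref{2iso} the isomorphisms $l_M$ and $r_M$ identify these two intervals with the full biprojection lattices of $\mathcal{P}_1$ and $\mathcal{P}_2$, both of which are distributive by hypothesis. Hence the biprojection lattice of $\mathcal{P}$ is exactly the concatenation of the two factor lattices, identified along $b_0$. Since distributivity is stable under concatenation (Remark~\ref{distable}), $\mathcal{P}$ is distributive.

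It remains to show that $\mathcal{P}$ is Dedekind, i.e. that every biprojection $b$ is normal (bicentral, Definition~\ref{normal}), and this is where I expect the main obstacle. By the previous paragraph we may assume $b \le b_0$ or $b \ge b_0$, so $b$ corresponds under $l_M$ or $r_M$ to a normal biprojection of one of the factors, and is therefore central in the $2$-box space of that factor. The subtlety is that normality demands centrality of $b$ and $\mathcal{F}(b)$ in the full spaces $\mathcal{P}_{2,+}$ and $\mathcal{P}_{2,-}$ of $(N \subseteq L)$, which strictly contain the factor $2$-box spaces; centrality in the corner supplied by Theorem~\ref{2iso} is not by itself enough. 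Here I would invoke the defining feature of the free composition: the two factor $2$-box spaces generate $\mathcal{P}_{2,\pm}(N \subseteq L)$ in free position, with no mixed intertwiners beyond those forced by $b_0$. Concretely, I would show that an element coming from the opposite factor commutes with $b$ (and symmetrically for $\mathcal{F}(b)$) by means of the exchange relations of Lemma~\ref{prodcoprod} for $b_0$, which is exactly what upgrades factor-centrality to centrality in the whole. Granting this, both $b$ and $\mathcal{F}(b)$ are central, so $b$ is normal; thus $\mathcal{P}$ is simultaneously Dedekind and distributive, hence cyclic, proving stability under free composition.
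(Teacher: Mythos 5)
Your proposal matches the paper's proof essentially step for step: distributivity via Theorem~\ref{th2} (no extra intermediates, so the biprojection lattice is the concatenation of the two factor lattices) together with Remark~\ref{distable}, and normality of the biprojections via the isomorphisms of Theorem~\ref{landau1} combined with the exchange relations of Lemma~\ref{prodcoprod}. The paper's own proof is in fact terser than yours at the normality step — it simply cites these two results — so your explicit flagging of why corner-centrality must be upgraded to centrality in the full $2$-box space is a faithful (and slightly more careful) rendering of the same argument.
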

\begin{proof}
By Theorem \ref{th2}, the intermediate subfactor lattice of a free composition is the concatenation of the lattice of the two components (and see Remark \ref{distable}). By Theorem \ref{landau1} and Lemma \ref{prodcoprod}, the biprojections remain normal.
\end{proof}
The following theorem was proved in the $ 2 $-supertransitive case by Y. Watatani \cite[Proposition 5.1]{wa}. The general case was conjectured by the author, but specified and proved after a discussion with F. Xu.
\begin{theorem} \label{th3}
Let $ (N_i \subset M_i) $, $ i=1,2 $, be irreducible finite index subfactors. Then $$ \mathcal{L}(N_1 \subset M_1) \times \mathcal{L}(N_2 \subset M_2) \subsetneq \mathcal{L}(N_1 \otimes N_2 \subset M_1 \otimes M_2) $$ if and only if there are intermediate subfactors $ N_i \subseteq P_i \subset Q_i \subseteq M_i $, $ i=1,2 $, such that $ (P_i \subset Q_i) $ is depth $ 2 $ and isomorphic to $ (R^{\mathbb{A}_i} \subset R) $, with $ \mathbb{A}_2 \simeq \mathbb{A}_1^{cop} $ the (very simple) Kac algebra $ \mathbb{A}_1 $ with the opposite coproduct.
\end{theorem}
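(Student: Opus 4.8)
The plan is to reformulate the statement entirely in terms of the biprojection lattice and to recognize it as a ``quantum Goursat lemma'' for the tensor product. First I would record the baseline inclusion: for any pair of intermediate subfactors $K_i$, the product $K_1 \otimes K_2$ is intermediate in $N_1 \otimes N_2 \subset M_1 \otimes M_2$, so $\mathcal{L}(N_1 \subset M_1) \times \mathcal{L}(N_2 \subset M_2) \subseteq \mathcal{L}(N_1 \otimes N_2 \subset M_1 \otimes M_2)$ always holds; the content is to characterize strictness. By Theorem \ref{bisch} this amounts to deciding when the biprojection lattice of $\mathcal{P}(N_1 \otimes N_2 \subset M_1 \otimes M_2)$ contains a biprojection not of product form $b_1 \otimes b_2$. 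Here I would use the standard fact that, for irreducible subfactors, the $2$-box space of the tensor product is the tensor of the $2$-box spaces, $\mathcal{P}_{2,+}(N_1 \otimes N_2 \subset M_1 \otimes M_2) = \mathcal{P}_{2,+}(N_1 \subset M_1) \otimes \mathcal{P}_{2,+}(N_2 \subset M_2)$, the coproduct $*$ being the tensor of the two coproducts, so that the biprojection condition $e_1 \le b = b^2 = b^{\star} = \overline{b} = \lambda\, b*b$ of Theorem \ref{biproj} can be analyzed factorwise.

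For the ($\Leftarrow$) direction I would \emph{construct} the extra biprojection from the Kac-algebra data. Given $N_i \subseteq P_i \subset Q_i \subseteq M_i$ with $(P_i \subset Q_i) \simeq (R^{\mathbb{A}_i} \subset R)$, the interval of biprojections $[e^M_{P_i}, e^M_{Q_i}]$ transports, via the isomorphism $l$ of Theorem \ref{2iso}, to the biprojection data of the depth-$2$ piece $(P_i \subset Q_i)$, on which the Kac algebra $\mathbb{A}_i$ acts. The assumption $\mathbb{A}_2 \simeq \mathbb{A}_1^{cop}$ identifies the second quantum ``Galois group'' with the opposite of the first, and I would use it to define a \emph{graph} (diagonal) biprojection $b_\phi$ in $\mathcal{P}_{2,+}(P_1 \otimes P_2 \subset Q_1 \otimes Q_2)$, then pull it back to $\mathcal{P}_{2,+}(N_1 \otimes N_2 \subset M_1 \otimes M_2)$ through the two $l$-isomorphisms. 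The opposite coproduct is precisely the feature making $b_\phi$ satisfy $\lambda\, b_\phi * b_\phi = b_\phi$ for the tensor coproduct: passing to the Fourier dual on the first leg reverses the order of comultiplication, so the matching diagonal relation closes up only when the second coproduct is the opposite one. By construction $b_\phi$ is not of product form, which yields the strict inclusion.

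For the ($\Rightarrow$) direction I would run the Goursat analysis on an arbitrary non-product biprojection $b$. I would extract its Goursat data: the kernel biprojections $k_i = b \wedge f_i$, where $f_1 = e_1 \otimes id$ and $f_2 = id \otimes e_1$ are the coordinate biprojections (the intermediates $N_1 \otimes M_2$ and $M_1 \otimes N_2$), and the marginal biprojections $b_1, b_2$ obtained by compressing $b$ to each leg. Using Lemma \ref{th} and Lemma \ref{prodcoprod} I would check that these are again biprojections and that $b$ induces a lattice isomorphism between the intervals $[k_1, b_1]$ and $[k_2, b_2]$. Translating back through Theorem \ref{bisch} gives intermediate subfactors $P_i \subset Q_i$ with isomorphic biprojection lattices; the point is that $b$ being a single \emph{rigid} diagonal, rather than a product, forces this common interval to be irreducible, which by the depth-$2$/Kac-algebra correspondence is equivalent to $(P_i \subset Q_i)$ being depth $2$, i.e. $\simeq (R^{\mathbb{A}_i} \subset R)$ for a Kac algebra $\mathbb{A}_i$. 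The diagonal identification then reads as an isomorphism $\mathbb{A}_1 \simeq \mathbb{A}_2^{cop}$, the opposite again emerging from the Fourier transform on one leg of the tensor product.

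The hard part will be the forward direction. Reconstructing the Goursat data at the level of biprojections is routine once the marginals and kernels are shown to be biprojections, but proving that the shared subquotient is \emph{exactly} depth $2$ with a genuine Kac-algebra structure, and that this algebra is minimal in the required sense (the parenthetical ``very simple'', i.e. the diagonal piece being as small as possible with no proper intermediate coming from a quotient Kac algebra), is the delicate step: it needs the full exchange-relation machinery of \cite{li} together with the depth-$2$/Kac-algebra dictionary. Equally delicate is the careful bookkeeping of the Fourier transform across the two tensor legs required to pin down the opposite coproduct $\mathbb{A}_1^{cop}$ rather than $\mathbb{A}_1$ itself.
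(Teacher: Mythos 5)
Your high-level picture --- a quantum Goursat lemma, with depth-$2$/Kac-algebra subquotients playing the role of the group quotients $H_i/K_i$ in the classical Goursat lemma --- is the right way to think about this theorem, and the baseline reductions you make (splitting of the $2$-box space, Theorem \ref{bisch}, kernels $k_i = b \wedge f_i$) are fine. But there is a genuine gap at exactly the step you flag as ``delicate'', and the route you sketch for closing it would fail. You propose to show that the Goursat subquotient interval is ``irreducible'' and then invoke ``the depth-$2$/Kac-algebra correspondence'' to conclude that $(P_i \subset Q_i)$ is depth $2$. That correspondence only says that an irreducible finite-index subfactor \emph{which is depth $2$} is of the form $(R^{\mathbb{A}} \subset R)$; it gives no way to deduce depth $2$ from irreducibility, nor from any property of the biprojection lattice. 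Depth $2$ is a property of the standard invariant (the principal graph), and it is invisible to the intermediate-subfactor lattice: the two-element lattice is realized both by $(R^{\mathbb{Z}/p} \subset R)$, which is depth $2$, and by the Haagerup subfactor, which is not. So no amount of Goursat-style lattice bookkeeping, and not the exchange-relation identities of \cite{li} either, can produce the depth-$2$ conclusion, the ``very simple'' property of $\mathbb{A}_i$, or the identification $\mathbb{A}_2 \simeq \mathbb{A}_1^{cop}$; these require genuine operator-algebraic input on the tensor-product subfactor itself.

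For comparison, the paper does not prove any of this internally: it sandwiches a non-product intermediate $R$ between the closest product-form intermediates, $P_1 \otimes P_2 \subset R \subset Q_1 \otimes Q_2$, and then quotes F.~Xu's results wholesale --- \cite[Proposition 3.5 (2)]{xu} for depth $2$, very simplicity and the cop-relation, and \cite[Theorem 3.14]{xu} for the converse direction. Note also that your ($\Leftarrow$) direction has the same character: the verification that your diagonal element $b_\phi$ actually satisfies the biprojection axioms of Theorem \ref{biproj} is asserted, not proved, and it is precisely the content of Xu's construction. So what you have written is an outline of a from-scratch reproof of Xu's theorem in which every deferred step is the entire mathematical content, plus one step (lattice data $\Rightarrow$ depth $2$) that is false as stated. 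To produce a proof at the level of this paper, reduce to the closest-product-form sandwich and cite \cite{xu}; to prove it independently, you would need Xu's analysis of relative commutants and commuting squares, not the lattice argument you propose.
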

\begin{proof}
Consider the intermediate subfactors $$ N_1 \otimes N_2 \subseteq P_1 \otimes P_2 \subset R \subset Q_1 \otimes Q_2 \subseteq M_1 \otimes M_2 $$ with $ R $ not of tensor product form, $ P_1 \otimes P_2 $ and $ Q_1 \otimes Q_2 $ the closest (below and above resp.) to $ R $ among those of tensor product form. Now using \cite[Proposition 3.5 (2)]{xu}, $ (P_i \subseteq Q_i) $, $ i=1,2 $, are depth $ 2 $, their corresponding Kac algebras, $ \mathbb{A}_i $, $ i=1,2 $, are very simple and $ \mathbb{A}_2 \simeq \mathbb{A}_1^{cop} $ \cite[Definition 3.6 and Proposition 3.10]{xu}. The converse is given by \cite[Theorem 3.14]{xu}.
\end{proof}
\begin{remark} \label{coroprod} By Theorem \ref{th3} and Remark \ref{distable}, the class of (finite index irreducible) cyclic subfactors is stable under certain tensor products (i.e. if there is no cop-isomorphic depth $ 2 $ intermediate), and by Theorem \ref{landau1} and Lemma \ref{prodcoprod}, the biprojections remain normal. \end{remark}
\begin{lemma} \label{c*} If a subfactor is cyclic then the intermediate and dual subfactors are also cyclic.
\end{lemma}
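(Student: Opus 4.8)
The plan is to prove the two assertions separately: for the dual subfactor I will use the Fourier self-duality of the notions involved, while for the intermediate subfactors $(N \subseteq K)$ and $(K \subseteq M)$ I will transport everything through the isomorphisms $l_K$ and $r_K$ of Theorem \ref{2iso}. In each case I must check both halves of cyclicity, namely distributivity of the biprojection lattice and the Dedekind (bicentrality) property.

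For the dual subfactor, first I would observe that the map $b \mapsto \mathcal{F}(b)$ carries biprojections of $\mathcal{P}(N \subseteq M)$ to (multiples of) the biprojections of the dual, that it is order-reversing since $\mathcal{F}(e_1)$ and $\mathcal{F}(id)$ are multiples of $id$ and $e_1$, and that it is an involution because $\mathcal{F}(\mathcal{F}(b)) = \overline{b} = b$ by Theorem \ref{biproj}. Hence the dual biprojection lattice is the reversal of $[e_1, id]$, which is again distributive by Remark \ref{distable}. The Dedekind property is then immediate: the normality condition of Definition \ref{normal}, that $b$ and $\mathcal{F}(b)$ be central, is manifestly symmetric under $b \mapsto \mathcal{F}(b)$, again using $\mathcal{F}(\mathcal{F}(b)) = b$. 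So the dual is cyclic.

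For the intermediate subfactors, the isomorphisms $l_K$ and $r_K$ of Theorem \ref{2iso} identify $\mathcal{P}_{2,+}(N \subseteq K)$ with the corner $e^M_K \, \mathcal{P}_{2,+} \, e^M_K$ and $\mathcal{P}_{2,+}(K \subseteq M)$ with $e^M_K * \mathcal{P}_{2,+} * e^M_K$, sending $e^K_P \mapsto e^M_P$ and $e^M_Q \mapsto e^M_Q$, and preserving $+, \times, ()^\star$ and the coproduct $*$ up to an inessential scalar. Consequently they restrict to lattice isomorphisms from the biprojection lattices of the two intermediate subfactors onto the intervals $[e_1, e^M_K]$ and $[e^M_K, id]$ of $[e_1, id]$. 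These are sublattices, so their distributivity follows from Remark \ref{distable}.

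The remaining point, which I expect to be the main technical obstacle, is the descent of normality to the intermediate subfactors. I would first reformulate normality: from $a * b = \mathcal{F}(\mathcal{F}^{-1}(a) \cdot \mathcal{F}^{-1}(b))$ one gets $\mathcal{F}(p \cdot q) = \mathcal{F}(p) * \mathcal{F}(q)$, so $\mathcal{F}$ interchanges product and coproduct; combined with $\overline{b} = b$, this shows that $b$ is normal if and only if $b$ is central for both the product and the coproduct. Since $l_K$ and $r_K$ preserve both operations (scalars not affecting commutation), an intermediate biprojection is normal exactly when its image $e^M_P$, respectively $e^M_Q$, is central for product and coproduct inside the relevant corner. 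Here Lemma \ref{prodcoprod}, applied with $b = e^M_K$, guarantees that each corner is closed under the ambient operation, so centrality of $e^M_P$ (resp. $e^M_Q$) for the ambient product and coproduct, which holds by the Dedekind hypothesis on $\mathcal{P}(N \subseteq M)$, restricts to centrality in the corner. Thus both intermediate subfactors are Dedekind, hence cyclic, completing the proof.
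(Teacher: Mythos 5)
Your proof is correct and follows essentially the same route as the paper, whose entire proof is the citation ``By Remark \ref{distable}, Theorem \ref{landau1} and Lemma \ref{prodcoprod}'': you use Remark \ref{distable} for distributivity (sublattices $[e_1,e^M_K]$, $[e^M_K,id]$ and reversal for the dual), Theorem \ref{2iso} to transport biprojections, and Lemma \ref{prodcoprod} for closure of the corners, exactly as intended, while additionally supplying the details (e.g.\ the reformulation of normality as centrality for product and coproduct) that the paper leaves implicit. The only soft spot is that your order-reversal claim for $b \mapsto \mathcal{F}(b)$ is justified merely by checking the endpoints $e_1$ and $id$, but this standard fact is easily completed with Lemma \ref{th} and Theorem \ref{biproj}, and the paper itself offers no more detail there.
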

\begin{proof} By Remark \ref{distable}, Theorem \ref{landau1} and Lemma \ref{prodcoprod}. 
\end{proof}
\noindent A subfactor as $ (R \subseteq R \rtimes G) $ or $ (R^G \subseteq R) $ is called a ``group subfactor''. Then, the following lemma justifies the choice of the word ``cyclic''.
\begin{lemma}
A cyclic ``group subfactor" is a ``cyclic group" subfactor. \end{lemma}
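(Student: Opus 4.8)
The plan is to reduce the statement to Ore's theorem (Theorem~\ref{ore1}) through the Galois correspondence, so that the \emph{distributive} half of the cyclicity hypothesis does all the work. A group subfactor is, up to duality, of the form $(R^G \subseteq R)$; since cyclicity passes to the dual by Lemma~\ref{c*}, it suffices to treat this case and then transport the conclusion back.

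First I would invoke the Galois correspondence \cite{nk}, which identifies the intermediate subfactors $R^G \subseteq P \subseteq R$ with the subgroups $K \subseteq G$ via $P = R^K$. This identification is order-reversing, and it carries $K_1 \vee K_2 = \langle K_1, K_2\rangle$ and $K_1 \wedge K_2 = K_1 \cap K_2$ to the meet and join, respectively, of the corresponding intermediate subfactors (since $R^{\langle K_1,K_2\rangle} = R^{K_1} \cap R^{K_2}$ and $R^{K_1 \cap K_2} = R^{K_1} \vee R^{K_2}$). Hence the intermediate subfactor lattice $\mathcal{L}(R^G \subseteq R)$ is precisely the reversal of $\mathcal{L}(G)$. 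By Theorem~\ref{bisch} the biprojections are exactly the Jones projections $e^R_{R^K}$ of these intermediate subfactors, so the biprojection lattice coincides with $\mathcal{L}(R^G \subseteq R)$, i.e. with the reversal of $\mathcal{L}(G)$.

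Next, since the subfactor is cyclic, its biprojection lattice is distributive; as distributivity is invariant under reversal (Remark~\ref{distable}), the subgroup lattice $\mathcal{L}(G)$ is itself distributive. Ore's theorem (Theorem~\ref{ore1}) then forces $G$ to be cyclic, which is the desired conclusion. For the dual picture $(R \subseteq R \rtimes G)$ the identical argument runs with $P = R \rtimes K$, which is order-\emph{preserving}, so the biprojection lattice is $\mathcal{L}(G)$ itself rather than its reversal; distributivity of the biprojection lattice again yields $G$ cyclic directly, consistently with Lemma~\ref{c*}.

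The only point needing care—and the main (mild) obstacle—is matching the lattice operations across the Galois correspondence and confirming via Theorem~\ref{bisch} that the biprojection lattice is literally the intermediate subfactor lattice; both facts are standard. I would also note that the \emph{Dedekind} half of the cyclicity hypothesis (all biprojections normal) plays no role in this direction: it is automatic in the converse, since a cyclic group is abelian, all of its subgroups are normal, and by Teruya's correspondence (Definition~\ref{normal}) the associated biprojections are then bicentral, so that the converse ``a cyclic group subfactor is a cyclic subfactor'' holds as well.
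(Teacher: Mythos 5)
Your proposal is correct and follows essentially the same route as the paper: Galois correspondence to identify the biprojection lattice with the subgroup lattice (up to reversal), invariance of distributivity under reversal, and Ore's Theorem~\ref{ore1} to conclude the group is cyclic, with the normality hypothesis playing no role in this direction. The paper's proof is just a terser version of the same argument (and, like your final remark, disposes of the converse by noting that all subgroups of a cyclic group are normal, via Teruya's correspondence \cite{teru}).
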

\begin{proof}
By Galois correspondence, if a ``group subfactor" is cyclic then the subgroup lattice is distributive, and so the group is cyclic by Ore's Theorem \ref{ore1}. The normal biprojections of a group subfactor corresponds to the normal subgroups \cite{teru}, but every subgroup of a cyclic group is normal.
\end{proof} 
\begin{problem} \label{cykac}
Is a depth $ 2 $ irreducible finite index cyclic subfactor, a cyclic group subfactor?
\end{problem}
The answer could be \textbf{no} because the following fusion ring (discovered by the author \cite{pa210}), the first known to be simple integral and non-trivial, \textit{could be} the Grothendieck ring of a ``maximal" Kac algebra of dimension $ 210 $ and type $ (1,5,5,5,6,7,7) $. 
\begin{scriptsize}
 $$ \begin{smallmatrix}
1 & 0 & 0 & 0& 0& 0& 0 \\
0 & 1 & 0 & 0& 0& 0& 0 \\
0 & 0 & 1 & 0& 0& 0& 0 \\
0 & 0 & 0 & 1& 0& 0& 0 \\
0 & 0 & 0 & 0& 1& 0& 0 \\
0 & 0 & 0 & 0& 0& 1& 0 \\
0 & 0 & 0 & 0& 0& 0& 1 
\end{smallmatrix}, \ 
\begin{smallmatrix}
0 & 1 & 0 & 0& 0& 0& 0 \\
1 & 1 & 0 & 1& 0& 1& 1 \\
0 & 0 & 1 & 0& 1& 1& 1 \\
0 & 1 & 0 & 0& 1& 1& 1 \\
0 & 0 & 1 & 1& 1& 1& 1 \\
0 & 1 & 1 & 1& 1& 1& 1 \\
0 & 1 & 1 & 1& 1& 1& 1 
\end{smallmatrix}, \ 
\begin{smallmatrix}
0 & 0 & 1 & 0& 0& 0& 0 \\
0 & 0 & 1 & 0& 1& 1& 1 \\
1 & 1 & 1 & 0& 0& 1& 1 \\
0 & 0 & 0 & 1& 1& 1& 1 \\
0 & 1 & 0 & 1& 1& 1& 1 \\
0 & 1 & 1 & 1& 1& 1& 1 \\
0 & 1 & 1 & 1& 1& 1& 1 
\end{smallmatrix}, \ 
\begin{smallmatrix}
0 & 0 & 0 & 1& 0& 0& 0 \\
0 & 1 & 0 & 0& 1& 1& 1 \\
0 & 0 & 0 & 1& 1& 1& 1 \\
1 & 0 & 1 & 1& 0& 1& 1 \\
0 & 1 & 1 & 0& 1& 1& 1 \\
0 & 1 & 1 & 1& 1& 1& 1 \\
0 & 1 & 1 & 1& 1& 1& 1 
\end{smallmatrix}, \ 
\begin{smallmatrix}
0 & 0 & 0 & 0& 1& 0& 0 \\
0 & 0 & 1 & 1& 1& 1& 1 \\
0 & 1 & 0 & 1& 1& 1& 1 \\
0 & 1 & 1 & 0& 1& 1& 1 \\
1 & 1 & 1 & 1& 1& 1& 1 \\
0 & 1 & 1 & 1& 1& 2& 1 \\
0 & 1 & 1 & 1& 1& 1& 2 
\end{smallmatrix}, \ 
\begin{smallmatrix}
0 & 0 & 0 & 0& 0& 1& 0 \\
0 & 1 & 1 & 1& 1& 1& 1 \\
0 & 1 & 1 & 1& 1& 1& 1 \\
0 & 1 & 1 & 1& 1& 1& 1 \\
0 & 1 & 1 & 1& 1& 2& 1 \\
1 & 1 & 1 & 1& 2& 1& 2 \\
0 & 1 & 1 & 1& 1& 2& 2 
\end{smallmatrix}, \ 
\begin{smallmatrix}
0 & 0 & 0 & 0& 0& 0& 1 \\
0 & 1 & 1 & 1& 1& 1& 1 \\
0 & 1 & 1 & 1& 1& 1& 1 \\
0 & 1 & 1 & 1& 1& 1& 1 \\
0 & 1 & 1 & 1& 1& 1& 2 \\
0 & 1 & 1 & 1& 1& 2& 2 \\
1 & 1 & 1 & 1& 2& 2& 1
\end{smallmatrix} $$ 
\end{scriptsize}
\subsection{The w-cyclic subfactor planar algebras} \hspace*{1cm} \\
\noindent Let $ \mathcal{P} $ be an irreducible subfactor planar algebra.
\begin{theorem} \label{mini}
Let $ p \in \mathcal{P}_{2,+} $ be a minimal central projection. Then there exists $ v \le p $ minimal projection such that $ \langle v \rangle = \langle p \rangle $.
\end{theorem}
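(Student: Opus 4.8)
The plan is to reduce the statement, which on its face involves the coproduct dynamics defining $\langle \cdot \rangle$, to a purely linear-algebraic genericity argument. Since $v \le p$ gives $v \preceq p$ and hence $\langle v \rangle \le \langle p \rangle$ by Lemma \ref{th}(3), only the reverse inequality is at issue: I must exhibit one minimal projection $v \le p$ whose generated biprojection is not strictly smaller than $\langle p \rangle$.

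The key observation I would record first is that for any biprojection $b$ and any projection $v$ one has $\langle v \rangle \le b \iff v \le b$. Indeed, $\langle v \rangle$ is by Definition \ref{gener} the smallest biprojection dominating $v$, so if $v \le b$ then the biprojection $b$ dominates $v$ and $\langle v \rangle \le b$; conversely $v \le \langle v \rangle \le b$. This collapses the generation question onto the projection order. Applying it to $p$ as well, for every biprojection $b$ we get $p \le b \iff \langle p \rangle \le b$.

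Now I would work inside the corner $p\,\mathcal{P}_{2,+}\,p$, which by minimal centrality of $p$ is a full matrix algebra $M_k(\mathbb{C})$ with $R(p) \cong \mathbb{C}^k$; its minimal projections are exactly the minimal projections $v$ of $\mathcal{P}_{2,+}$ with $v \le p$, and they correspond bijectively to the lines $\mathbb{C}\xi \subseteq R(p)$ via $R(v) = \mathbb{C}\xi$. Suppose for such a $v$ that $\langle v \rangle < \langle p \rangle$. Then $\langle v \rangle$ lies below some coatom of the finite interval $[e_1,\langle p \rangle]$ in the biprojection lattice $[e_1,id]$ (Theorem \ref{bisch}); list these coatoms as $b_1,\dots,b_m$. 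By the equivalence above, $\langle v \rangle \le b_j$ is the same as $v \le b_j$, i.e. $\xi \in R(p)\cap R(b_j)$. Moreover each $R(p)\cap R(b_j)$ is a \emph{proper} subspace of $R(p)$: otherwise $p \le b_j$, whence $\langle p \rangle \le b_j < \langle p \rangle$, a contradiction.

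Finally I would invoke the standard fact that a vector space over the infinite field $\mathbb{C}$ is never a union of finitely many proper subspaces, so there is $\xi \in R(p)$ avoiding all the $R(p)\cap R(b_j)$ (the case $m=0$, i.e. $\langle p \rangle = e_1$, being trivial). The rank-one projection $v$ onto $\mathbb{C}\xi$ is then a minimal projection $\le p$ with $v \not\le b_j$ for all $j$, hence $\langle v \rangle \not\le b_j$ for all $j$, forcing $\langle v \rangle = \langle p \rangle$. The only real content—and the step I would be most careful about—is the equivalence $\langle v \rangle \le b \iff v \le b$, which is exactly what trades the dynamical definition of $\langle\cdot\rangle$ for the lattice order; once it is in hand, the conclusion is a one-line avoidance statement.
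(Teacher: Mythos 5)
Your proposal is correct and takes essentially the same route as the paper's own proof: both reduce to the corner $p\,\mathcal{P}_{2,+}\,p \cong M_k(\mathbb{C})$ given by minimal centrality, observe that each coatom $b_j$ of the finite interval $[e_1,\langle p \rangle]$ cuts out a proper subspace of $R(p)$ (since $p \le b_j$ would force $\langle p \rangle \le b_j$), and pick a line in $R(p)$ avoiding this finite union of proper subspaces. The differences are only presentational: your explicit equivalence $\langle v \rangle \le b \Leftrightarrow v \le b$ lets you bypass the paper's case distinction on whether $p \preceq \sum_j b_j$ and its decomposition $\mathrm{range}(p) = \sum_j \bigl(\mathrm{range}(b_j) \cap \mathrm{range}(p)\bigr)$.
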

\begin{proof}
 If $p$ is a minimal projection, then it is ok. Else, let $b_1, \dots , b_n$ be the coatoms of $[e_1,\langle p \rangle]$ ($n$ is finite by \cite{wa}). If $p \not \preceq \sum_{i=1}^n b_i$, then $\exists u \le p $ minimal projection such that $u  \not \le b_i \ \forall i$, so that $\langle u \rangle = \langle p \rangle$. Else $p \preceq \sum_{i=1}^n b_i$ (with $n>1$, otherwise $p \le b_1$ and $\langle p \rangle \le b_1$, contradiction). Let $E_i=\text{range}(b_i)$ and $F=\text{range}(p)$, then $F=\sum_i E_i \cap F$ (because $p$ is a minimal central projection) with $1<n<\infty$ and $E_i \cap F \subsetneq F \ \forall i$ (otherwise $\exists i$ with $p \le b_i$, contradiction), so $\dim (E_i \cap F)< \dim (F)$ and there exists $U \subseteq F$ one-dimensional subspace such that $U \not \subseteq E_i \cap F$  $\forall i$, and so a fortiori $U \not \subseteq E_i$   $\forall i$. It follows that $u=p_{U} \le p$  is a minimal projection such that $\langle u \rangle = \langle p \rangle$.
\end{proof} 
Thanks to Theorem \ref{mini}, we can give the following definition: 
\begin{definition} \label{wcy} The planar algebra $ \mathcal{P} $ is weakly cyclic (or w-cyclic) if it satisfies one of the following equivalent assertions: 
\begin{itemize}
\item $ \exists u \in \mathcal{P}_{2,+} $ minimal projection such that $ \langle u \rangle=id $.
\item $ \exists p \in \mathcal{P}_{2,+} $ minimal central projection such that $ \langle p \rangle=id $.
\end{itemize}
We call a subfactor w-cyclic if its planar algebra is w-cyclic.
\end{definition} 
The following remark justifies the choice of the word ``w-cyclic''.
\begin{remark} By Corollary \ref{wgrp2}, a finite group subfactor $ (R^G \subset R) $ is w-cyclic if and only if $ G $ is linearly primitive, which is strictly weaker than cyclic (see for example $ S_3 $), nevertheless the notion of w-cyclic is a singly generated notion in the sense that ``there is a minimal projection generating the identity biprojection''. We can also see the weakness of this assumption by the fact that the minimal projection does not necessarily generate a basis for the set of positive operators, but just the support of it, i.e. the identity. \end{remark} 
\begin{question} Is a cyclic subfactor planar algebra w-cyclic? \\
The answer is \textbf{yes} by Theorem \ref{thm}.
\end{question} 
Let $ \mathcal{P} = \mathcal{P}(N \subseteq M) $ be an irreducible subfactor planar algebra. Take an intermediate subfactor $ N \subseteq K \subseteq M $ and its biprojection $ b = e^M_K $.

\begin{lemma} \label{pos} Let $ \mathcal{A} $ be a $ \star $-subalgebra of $ \mathcal{P}_{2,+} $. Then any element $ x \in \mathcal{A} $ is positive in $ \mathcal{A} $ if and only if it is positive in $ \mathcal{P}_{2,+} $. \end{lemma}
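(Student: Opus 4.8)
The plan is to reduce everything to the intrinsic characterization of positivity in a $C^\star$-algebra, namely that $x \ge 0$ exactly when $x = y^\star y$ for some $y$ in the algebra, and then to locate the positive square root of $x$ inside $\mathcal{A}$. The starting observation is that $\mathcal{P}_{2,+} = N' \cap M_1$ is finite dimensional, so the $\star$-subalgebra $\mathcal{A}$ is automatically norm closed, hence itself a (finite dimensional) $C^\star$-algebra; in particular the $\star$-operation and the product on $\mathcal{A}$ are the restrictions of those on $\mathcal{P}_{2,+}$, so self-adjointness of an element of $\mathcal{A}$ means the same thing in both algebras.

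One direction is immediate: if $x$ is positive in $\mathcal{A}$, write $x = y^\star y$ with $y \in \mathcal{A} \subseteq \mathcal{P}_{2,+}$; the same equation read in $\mathcal{P}_{2,+}$ shows $x \ge 0$ there.

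For the converse I would argue as follows. Suppose $x \in \mathcal{A}$ is positive in $\mathcal{P}_{2,+}$; then $x = x^\star$ and its spectrum, computed in $\mathcal{P}_{2,+}$, is contained in $[0,\|x\|]$. By the Weierstrass theorem choose polynomials $q_n$ converging uniformly to $t \mapsto \sqrt{t}$ on $[0,\|x\|]$, and set $p_n(t) := q_n(t) - q_n(0)$. Since $q_n(0) \to \sqrt{0} = 0$, the $p_n$ still converge uniformly to $\sqrt{\,\cdot\,}$ on $[0,\|x\|]$, and each $p_n$ has zero constant term. Consequently $p_n(x)$ is a linear combination of the powers $x, x^2, \dots$, all of which lie in $\mathcal{A}$, so $p_n(x) \in \mathcal{A}$; and by continuity of the functional calculus $p_n(x) \to \sqrt{x}$ in norm. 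As $\mathcal{A}$ is closed, $\sqrt{x} \in \mathcal{A}$, and then $x = (\sqrt{x})^\star \sqrt{x}$ exhibits $x$ as positive in $\mathcal{A}$.

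The only delicate point, and the reason the statement is not a one-line consequence of spectral permanence, is that $\mathcal{A}$ need not contain the identity $id$ of $\mathcal{P}_{2,+}$: its unit is merely some projection $p \le id$. This is exactly what the device of subtracting the constant term handles, since by forcing $p_n(0)=0$ we stay inside the non-unital algebra generated by $x$ and never need $id$ to perform the functional calculus. I expect this normalization of the approximating polynomials to be the main, and essentially the only, obstacle; everything else is the standard passage between the $y^\star y$ description of positivity and the continuous functional calculus.
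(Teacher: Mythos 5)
Your proof is correct, and the forward direction (reading $x = y^{\star}y$ in the larger algebra) is identical to the paper's; but your converse takes a genuinely different route. The paper disposes of it in two lines using the tracial inner product: if $x \ge 0$ in $\mathcal{P}_{2,+}$, then $\langle xy | y \rangle = tr(y^{\star}xy) \ge 0$ for all $y \in \mathcal{P}_{2,+}$, hence in particular for all $y \in \mathcal{A}$, and this positivity of the quadratic form characterizes positivity in $\mathcal{A}$ (implicitly via the GNS/left regular representation of $\mathcal{A}$ with respect to the restricted trace, which is still faithful). You instead construct the positive square root of $x$ inside $\mathcal{A}$ by approximating $t \mapsto \sqrt{t}$ with polynomials normalized to have zero constant term, which is the standard proof that a closed $\star$-subalgebra $\mathcal{B}$ of a $C^{\star}$-algebra satisfies $\mathcal{B}_{+} = \mathcal{B} \cap \mathcal{A}_{+}$. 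Each approach buys something: the paper's is shorter and exploits structure always available in this setting (the faithful trace on the $2$-box space), though it leaves unjustified the step from $tr(y^{\star}xy) \ge 0$ on $\mathcal{A}$ to positivity in $\mathcal{A}$; yours needs no trace at all, works verbatim in any $C^{\star}$-algebra, and makes explicit the one genuine subtlety that the paper never mentions, namely that the unit of $\mathcal{A}$ may be a proper subprojection of $id$, so that naive functional calculus or spectral-permanence arguments would be illegitimate. Your use of finite-dimensionality (only to get closedness of $\mathcal{A}$ for free) is also the right minimal input, since the lemma as stated does not assume $\mathcal{A}$ closed.
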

\begin{proof}
If $ x $ is positive in $ \mathcal{A} $, then it is of the form $ aa^{\star} $, with $ a \in \mathcal{A} $, but $ a \in \mathcal{P}_{2,+} $ also, so $ x $ is positive in $ \mathcal{P}_{2,+} $. Conversely, if $ x $ is positive in $ \mathcal{P}_{2,+} $ then $ \langle xy | y \rangle=tr(y^{\star}xy) \ge 0 $, for any $ y \in \mathcal{P}_{2,+} $, so in particular, for any $ y \in \mathcal{A} $, which means that $ x $ is positive in $ \mathcal{A} $.
\end{proof}
\noindent Note that Lemma \ref{pos} will be applied to $ \mathcal{A} = b\mathcal{P}_{2,+}b $ or $ b*\mathcal{P}_{2,+}*b $.

\begin{proposition} \label{left}
The planar algebra $ \mathcal{P}(e_1,b) $ is w-cyclic if and only if there is a minimal projection $ u \in \mathcal{P}_{2,+} $ such that $ \langle u \rangle = b $.
\end{proposition}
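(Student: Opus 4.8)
The plan is to transport the defining property of w-cyclicity across the von Neumann algebra isomorphism $ l_b := l_K $ of Theorem \ref{landau1}, where $ K $ is the intermediate subfactor with $ e^M_K = b $, so that $ \mathcal{P}(e_1,b) = \mathcal{P}(N \subseteq K) $ and $ l_b : \mathcal{P}_{2,+}(N \subseteq K) \to b\mathcal{P}_{2,+}b $. The two facts I will rely on are that $ l_b $ sends the identity biprojection $ e^K_K $ of $ \mathcal{P}(N \subseteq K) $ to $ b $ (the case $ P = K $ of $ l_K(e^K_P) = e^M_P $), and that it preserves both minimality (being a $ \star $-algebra isomorphism) and the generation operation, i.e. $ \langle l_b(u') \rangle = l_b(\langle u' \rangle) $.

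First I would unwind the definitions: by Definition \ref{wcy}, $ \mathcal{P}(e_1,b) = \mathcal{P}(N \subseteq K) $ is w-cyclic exactly when there is a minimal projection $ u' \in \mathcal{P}_{2,+}(N \subseteq K) $ with $ \langle u' \rangle = e^K_K $. Applying $ l_b $ and using the two facts above, this is equivalent to the existence of a minimal projection $ u = l_b(u') $ of the corner algebra $ b\mathcal{P}_{2,+}b $ with $ \langle u \rangle = b $. I would then invoke the standard corner-algebra observation that a projection $ u \le b $ is minimal in $ b\mathcal{P}_{2,+}b $ if and only if it is minimal in $ \mathcal{P}_{2,+} $ (any nonzero subprojection of $ u $ automatically lies under $ b $, hence in the corner). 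So w-cyclicity of $ \mathcal{P}(e_1,b) $ is equivalent to the existence of a minimal projection $ u \in \mathcal{P}_{2,+} $ with $ u \le b $ and $ \langle u \rangle = b $.

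It remains to match this with the statement, which does not demand $ u \le b $; this is the one point needing care. The observation is that the constraint is automatic: if $ u \in \mathcal{P}_{2,+} $ is a minimal projection with $ \langle u \rangle = b $, then since $ \langle u \rangle $ is by Definition \ref{gener} the smallest biprojection $ \succeq u $, we get $ u \preceq \langle u \rangle = b $; as $ u $ and $ b $ are both projections, $ \preceq $ reduces to $ \le $, so $ u \le b $. Hence the quantifier over all minimal projections of $ \mathcal{P}_{2,+} $ in the statement already forces $ u $ into the corner $ b\mathcal{P}_{2,+}b $, and the equivalence follows. The main work is thus purely bookkeeping through $ l_b $; the only genuinely delicate step is recognizing that $ \langle u \rangle = b $ forces $ u \le b $, which removes any apparent gap between the unrestricted quantifier in the statement and the corner-algebra picture coming from $ l_b $.
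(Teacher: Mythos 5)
Your proof is correct and follows essentially the same route as the paper's: transporting w-cyclicity through the isomorphism $l_b = l_K$ of Theorem \ref{2iso}, using that it carries $e^K_K$ to $b$, preserves the generated biprojection, and that minimality in the corner $b\mathcal{P}_{2,+}b$ agrees with minimality in $\mathcal{P}_{2,+}$. Your one addition --- observing that $\langle u \rangle = b$ forces $u \preceq b$, hence $u \le b$, so the unrestricted quantifier in the statement automatically lands in the corner --- is a point the paper's chain of equivalences leaves implicit, and it is handled correctly.
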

\begin{proof}
The planar algebra $ \mathcal{P}(N \subseteq K) $ is w-cyclic if and only if there is a minimal projection $ x \in \mathcal{P}_{2,+}(N \subseteq K) $ such that $ \langle x \rangle = e^K_K $, if and only if $ l_K(\langle x \rangle) = l_K(e^K_K) $, if and only if $ \langle u \rangle = e^M_K $ (by Theorem \ref{2iso}), with $ u=l_K(x) $ a minimal projection in $ e^M_K\mathcal{P}_{2,+}e^M_K $ and in $ \mathcal{P}_{2,+} $. 
\end{proof}

\begin{lemma} \label{lem1}
For any minimal projection $ x \in \mathcal{P}_{2,+}(b,id) $, $ r_b(x) $ is positive and for any minimal projection $ v \preceq r_b(x) $, there is $ \lambda > 0 $ such that $ b * v * b = \lambda r_b(x) $.
\end{lemma}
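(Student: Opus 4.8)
The plan is to read the statement entirely through the identification furnished by Theorem \ref{2iso}: the map $r_b$ is a von Neumann algebra isomorphism of $\mathcal{P}_{2,+}(b,id)$ onto the subalgebra $\mathcal{A}:=b*\mathcal{P}_{2,+}*b$ (for the usual product and $\star$), carrying the minimal projection $x$ to a minimal projection of $\mathcal{A}$. Positivity is then immediate: $x$ is a projection, hence positive in $\mathcal{P}_{2,+}(b,id)$, so $r_b(x)\ge 0$ by Theorem \ref{2iso}. In fact, since $r_b$ is a $\star$-algebra isomorphism onto $\mathcal{A}$, the element $P:=r_b(x)$ is a genuine projection of $\mathcal{P}_{2,+}$, and it is this stronger fact that drives the rest of the argument. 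Write $P=b*a*b$ with $a\in\mathcal{P}_{2,+}$ (possible since $P\in\mathcal{A}$).

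For the main assertion I would first record two compression identities extracted from the second displayed line of Lemma \ref{prodcoprod}: for every $w\in\mathcal{P}_{2,+}$ one has $b*(P\cdot w)*b=P\cdot(b*w*b)$ (take $a_1=a$, $a_2=w$) and $b*(w\cdot P)*b=(b*w*b)\cdot P$ (take $a_1=w$, $a_2=a$). Because $v\preceq P$ and $P$ is a projection, $R(v)\le R(P)=P$ gives $v=PvP$; writing $PvP=P\cdot(v\cdot P)$ and chaining the two identities yields $b*v*b=b*(PvP)*b=P\cdot(b*v*b)\cdot P$. Thus $b*v*b$ lies in the corner $P\mathcal{A}P$. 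Since $x$ is minimal in $\mathcal{P}_{2,+}(b,id)$ and $r_b$ is a von Neumann algebra isomorphism, $P=r_b(x)$ is a minimal projection of $\mathcal{A}$, so $P\mathcal{A}P=\mathbb{C}P$ is one-dimensional, and therefore $b*v*b=\lambda P=\lambda\, r_b(x)$ for some scalar $\lambda$.

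It remains to check $\lambda>0$. By Lemma \ref{th}(1) the coproduct of positive operators is positive, so $b*v*b\ge 0$ and hence $\lambda\ge 0$. To exclude $\lambda=0$ I would use the multiplicativity of the trace under the coproduct (i.e. $tr(s*t)=\delta\,tr(s)\,tr(t)$, equivalent to the normalization $\lambda^{-1}=\delta\,tr(b)$ in Theorem \ref{biproj}): applying it twice gives $tr(b*v*b)=\delta^{2}\,tr(b)^{2}\,tr(v)>0$, since $tr(b)>0$ and $tr(v)>0$ for the nonzero projection $v$. Faithfulness of the trace then forces $b*v*b\neq 0$, so $\lambda>0$.

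The only genuine obstacle I anticipate is the compression step of the second paragraph: one must correctly view $\mathcal{A}$ as a von Neumann algebra for the \emph{usual} product (whose unit is not the ambient $id$), identify $r_b(x)$ as a \emph{minimal} projection there so that the corner $P\mathcal{A}P$ collapses to $\mathbb{C}P$, and recognize that the exchange relations of Lemma \ref{prodcoprod} are exactly what promote the elementary relation $v=PvP$ to $b*v*b=P(b*v*b)P$. Once this is in place, the positivity of $\lambda$ is routine.
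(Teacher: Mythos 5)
Your proof is correct, but it follows a genuinely different route from the paper's. The paper argues order-theoretically: writing $r_b(x)=b*u*b$ and using $b*b\sim b$, Lemma \ref{th}(2) gives $b*v*b \preceq b*r_b(x)*b \sim r_b(x)$; then, since $r_b^{-1}$ preserves positivity and $\preceq$ (Theorem \ref{2iso}), the element $r_b^{-1}(b*v*b)$ is a nonzero positive operator $\preceq x$, and minimality of $x$ forces it to equal $\lambda x$ with $\lambda>0$, whence $b*v*b=\lambda r_b(x)$. You never leave $\mathcal{P}_{2,+}$: the exchange relations of Lemma \ref{prodcoprod} upgrade the trivial relation $v=PvP$ to $b*v*b=P(b*v*b)P$, so $b*v*b$ sits in the corner $P\mathcal{A}P=\mathbb{C}P$, which is one-dimensional by minimality of $P$ in $\mathcal{A}$. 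Your route dispenses entirely with the coproduct monotonicity of Lemma \ref{th}(2) and records the sharper fact that $r_b(x)$ is a projection rather than merely positive; the paper's route is shorter granted Liu's lemmas and needs no corner analysis.

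Two corrections to your write-up, neither fatal. First, the trace formula $tr(s*t)=\delta\,tr(s)\,tr(t)$ does hold in an irreducible subfactor planar algebra (cap off one box and use $\mathcal{P}_{1,+}=\mathbb{C}$), but it appears nowhere in the paper, and it is not \emph{equivalent} to the normalization $\lambda^{-1}=\delta\,tr(b)$ of Theorem \ref{biproj}: that normalization is a consequence of the trace formula, not conversely, so your parenthetical justification is an overstatement. A justification inside the paper's toolkit is quicker: since $e_1\le b$ and $e_1*v*e_1=\delta^{-2}v$, Lemma \ref{th}(2) gives $b*v*b \succeq \delta^{-2}v \neq 0$ (this is exactly how Lemma \ref{lem2} argues); alternatively, read Lemma \ref{th}(1) as the paper itself does, as producing a nonzero positive coproduct from nonzero positive operators. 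Second, your anticipated obstacle about the unit of $\mathcal{A}$ is vacuous: Theorem \ref{2iso} gives $r_K(e^M_Q)=e^M_Q$, so taking $Q=M$ shows the unit of $\mathcal{A}$ is the ambient $id$ after all.
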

\begin{proof}
Firstly, $ x $ is positive, so by Theorem \ref{2iso}, $ r_b(x) $ is also positive. For any minimal projection $ v \preceq r_b(x) $, we have $ b * v * b \preceq r_b(x) $, because $$ b * v * b \preceq b*r_b(x)*b = b*b * u * b*b \sim b*u*b = r_b(x), $$ by Lemma \ref{th}(2) and with $ u \in \mathcal{P}_{2,+} $. Now by Lemma \ref{th}(1), $ b*v*b>0 $, so $ r_b^{-1}(b*v*b)>0 $ also, and by Theorem \ref{2iso}, $$ r_b^{-1}(b * v * b) \preceq x. $$ But $ x $ is a minimal projection, so by positivity, $ \exists \lambda > 0 $ such that $$ r_b^{-1}(b * v * b) = \lambda x. $$ It follows that $ b * v * b = \lambda r_b(x) $.
\end{proof}
\begin{lemma} \label{lem2}
Consider $ v \in \mathcal{P}_{2,+} $ positive. Then $ \langle b*v*b \rangle = \langle b,v \rangle $.
\end{lemma}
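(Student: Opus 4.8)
The plan is to prove the two inclusions $\langle b*v*b\rangle \le \langle b,v\rangle$ and $\langle b,v\rangle \le \langle b*v*b\rangle$ separately, working throughout with the order $\preceq$ on range projections and the monotonicity of the coproduct from Lemma \ref{th}. First I would reduce to the case where $v$ is a nonzero projection: since $v \sim R(v)$, Lemma \ref{th}(2) gives $b*v*b \sim b*R(v)*b$, and $\langle v\rangle = \langle R(v)\rangle$, so replacing $v$ by $R(v)$ changes neither side. I also record two facts about $e_1$ that I will use repeatedly: $e_1 \le b$ for every biprojection (Theorem \ref{biproj}), and $\delta e_1$ is the unit for the coproduct, i.e. $e_1*x = x*e_1 = \delta^{-1}x$ for all $x$ (the standard fact that the Jones projection is the convolution identity up to the scalar $\delta$).

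For the inclusion $\langle b*v*b\rangle \le \langle b,v\rangle$, write $c = \langle b,v\rangle$, a biprojection with $b \preceq c$ and $v \preceq c$. Applying Lemma \ref{th}(2) twice gives $b*v*b \preceq c*c*c$, and since $c$ is a biprojection Theorem \ref{biproj} yields $c*c = \delta\, tr(c)\, c$, so that $c*c*c$ is a positive multiple of $c$ and $c*c*c \sim c$. Thus $b*v*b \preceq c$, and Lemma \ref{th}(3) gives $\langle b*v*b\rangle \le \langle c\rangle = c$.

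The reverse inclusion is the substantial part, and I would obtain it by showing $b \preceq d$ and $v \preceq d$ for $d := \langle b*v*b\rangle$; then $d$ is a biprojection dominating both $b$ and $v$, so $\langle b,v\rangle \le d$ by minimality (Definition \ref{gener}). The bound $v \preceq d$ is easy: from $e_1 \preceq b$ and Lemma \ref{th}(2), $e_1*v*e_1 \preceq b*v*b$, while the unit property gives $e_1*v*e_1 = \delta^{-2}v \sim v$, so $v \preceq b*v*b \preceq d$. The bound $b \preceq d$ is where the real work lies. Here I would form the coproduct of $b*v*b$ with its contragredient. Since $\overline b = b$ (Theorem \ref{biproj}) and the expression is a palindrome in $b$, one has $\overline{b*v*b} = b*\overline v*b$, and using $b*b = \delta\, tr(b)\, b$ in the middle, $(b*v*b)*(b*\overline v*b)$ is a positive multiple of $b*(v*b*\overline v)*b$. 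The point is that the inner factor dominates $e_1$: since $e_1 \preceq b$, Lemma \ref{th}(2) gives $v*e_1*\overline v \preceq v*b*\overline v$, and $v*e_1*\overline v = \delta^{-1}v*\overline v$, while Lemma \ref{pre2} (with $p=q=v\neq 0$) gives $e_1 \preceq v*\overline v$; hence $e_1 \preceq v*b*\overline v$. Sandwiching by $b$ and using $b*e_1*b \sim b$, I get $b \preceq b*(v*b*\overline v)*b \sim (b*v*b)*(b*\overline v*b)$. Finally, since both $b*v*b \preceq d$ and $b*\overline v*b = \overline{b*v*b} \preceq d$, and $d$ is a biprojection, their coproduct is $\preceq d*d \sim d$; therefore $b \preceq d$.

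I expect the main obstacle to be the $b \preceq d$ step: the challenge is to recover the biprojection $b$ from the one-sided-looking datum $b*v*b$, which the plan resolves by convolving with the contragredient so that a copy of $e_1$ appears in the centre (via Lemma \ref{pre2}) and is then amplified back to $b$ by the surrounding biprojection. The only external input beyond the cited lemmas is the unit property of $e_1$ for the coproduct, which I would invoke as a recalled fact.
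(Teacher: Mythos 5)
Your proof is correct and takes essentially the same approach as the paper: the easy inclusion via monotonicity and the coproduct-idempotence of biprojections, and the hard inclusion by showing $v \preceq b*v*b$ through the unit property of $e_1$, then recovering $b$ by convolving with a contragredient so that Lemma \ref{pre2} yields $e_1 \preceq v*\overline{v}$, which is absorbed back into $\langle b*v*b \rangle$. The only cosmetic differences are that the paper convolves $b*v*b$ on the left by $\overline{v}$ alone (noting $\overline{v} \preceq \langle b*v*b\rangle$ by self-contragredience) rather than by the full contragredient $b*\overline{v}*b$, and your preliminary reduction of $v$ to a projection, which in fact tidies up the paper's implicit use of Lemma \ref{pre2} for a positive operator.
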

\begin{proof}
Firstly, by Definition \ref{gener}, $ b * v * b \preceq \langle b,v \rangle $, so by Lemma \ref{th}(3), $ \langle b * v * b \rangle \le \langle b,v \rangle $. Next $ e_1 \le b $ and $ x*e_1 = e_1*x = \delta^{-1}x $, so $$ v = \delta^2 e_1 * v * e_1 \preceq b * v * b. $$ Moreover by Theorem \ref{biproj}, $ \overline{v} \preceq \langle b * v * b \rangle $, but by Lemma \ref{pre2}, $$ \overline{v} * b * v * b \succeq \overline{v} * e_1 * v * b \sim \overline{v} * v * b \succeq e_1 * b \sim b. $$ Then $ b,v \le \langle b * v * b \rangle $, so we also have $ \langle b,v \rangle \le \langle b * v * b \rangle $. 
\end{proof}

\begin{proposition} \label{right}
The planar algebra $ \mathcal{P}(b,id) $ is w-cyclic if and only if there is a minimal projection $ v \in \mathcal{P}_{2,+} $ such that $ \langle b,v \rangle = id $ and $ r_b^{-1}(b*v*b) $ is a positive multiple of a minimal projection.
\end{proposition}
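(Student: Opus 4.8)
The plan is to transport the w-cyclicity of $\mathcal{P}(b,id)=\mathcal{P}(K\subseteq M)$ (where $b=e^M_K$) across the von Neumann algebra isomorphism $r_b$ of Theorem \ref{2iso}, whose image is $b*\mathcal{P}_{2,+}*b$ and which satisfies $r_b(e^M_M)=id$ together with $\langle r_b(a_1),\dots,r_b(a_n)\rangle=r_b(\langle a_1,\dots,a_n\rangle)$. By Definition \ref{wcy}, $\mathcal{P}(K\subseteq M)$ is w-cyclic exactly when there is a minimal projection $x\in\mathcal{P}_{2,+}(K\subseteq M)$ with $\langle x\rangle=e^M_M$. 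The whole argument amounts to matching such an $x$ with a minimal projection $v\in\mathcal{P}_{2,+}$ of the form required in the statement, using Lemmas \ref{lem1} and \ref{lem2} as the bridge.

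For the forward implication I would start from a minimal projection $x$ with $\langle x\rangle=e^M_M$. Since $r_b(x)$ is positive and nonzero by Lemma \ref{lem1}, its range projection dominates a minimal projection $v\in\mathcal{P}_{2,+}$, so that $v\preceq r_b(x)$. Lemma \ref{lem1} then yields $\lambda>0$ with $b*v*b=\lambda\, r_b(x)$. Applying $r_b^{-1}$ gives $r_b^{-1}(b*v*b)=\lambda x$, a positive multiple of the minimal projection $x$; and Lemma \ref{lem2} together with the scale-invariance of $\langle\cdot\rangle$ on positive operators gives $\langle b,v\rangle=\langle b*v*b\rangle=\langle r_b(x)\rangle=r_b(\langle x\rangle)=r_b(e^M_M)=id$. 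This $v$ is the desired minimal projection.

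For the converse, given a minimal projection $v$ with $\langle b,v\rangle=id$ such that $r_b^{-1}(b*v*b)=\mu x$ with $x$ a minimal projection and $\mu>0$, I would run the same computation backwards: from $b*v*b=\mu\, r_b(x)$ and Lemma \ref{lem2} one gets $id=\langle b,v\rangle=\langle b*v*b\rangle=\langle r_b(x)\rangle=r_b(\langle x\rangle)$, and injectivity of $r_b$ forces $\langle x\rangle=e^M_M$. Hence $x$ is a minimal projection of $\mathcal{P}_{2,+}(K\subseteq M)$ generating the identity biprojection, so $\mathcal{P}(b,id)$ is w-cyclic by Definition \ref{wcy}.

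The only genuinely delicate point, and the reason the statement needs the extra clause absent from the analogous left-hand Proposition \ref{left}, is that $r_b$ lands in $b*\mathcal{P}_{2,+}*b$, which is a $\times$-subalgebra of $\mathcal{P}_{2,+}$ but not a corner $e\mathcal{P}_{2,+}e$; consequently a minimal projection of the image need not be minimal in $\mathcal{P}_{2,+}$. The requirement that $r_b^{-1}(b*v*b)$ be a positive multiple of a minimal projection is precisely what repairs this mismatch, and Lemma \ref{lem1} is the technical input guaranteeing that $b*v*b$ collapses onto a scalar multiple of $r_b(x)$, so that the correspondence $x\leftrightarrow v$ is exact rather than merely up to domination.
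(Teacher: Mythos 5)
Your proposal is correct and follows essentially the same route as the paper: reduce w-cyclicity of $\mathcal{P}(b,id)$ to the existence of a minimal projection $x\in\mathcal{P}_{2,+}(K\subseteq M)$ with $\langle r_b(x)\rangle=id$ via Theorem \ref{2iso}, then use Lemma \ref{lem1} to produce $v\preceq r_b(x)$ with $b*v*b=\lambda r_b(x)$ and Lemma \ref{lem2} to convert $\langle b*v*b\rangle$ into $\langle b,v\rangle$, in both directions. The paper's proof is just a terser version of this argument (it ends with ``the result follows by Lemmas \ref{lem1} and \ref{lem2}''), and your closing remark correctly identifies why the extra clause on $r_b^{-1}(b*v*b)$ is needed here but not in Proposition \ref{left}.
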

\begin{proof}
The planar algebra $ \mathcal{P}(K \subseteq M) $ is w-cyclic if and only if there is a minimal projection $ x \in \mathcal{P}_{2,+}(K \subseteq M) $ such that $ \langle x \rangle = e^M_M $, if and only if $ r_K(\langle x \rangle) = r_K(e^M_M) $, if and only if $ \langle r_K(x) \rangle = e^M_M $ by Theorem \ref{2iso}. The results follows by Lemmas \ref{lem1} and \ref{lem2}.
\end{proof}

\subsection{The main result} \hspace*{1cm} \\
\noindent Let $ \mathcal{P} $ be an irreducible subfactor planar algebra.
 
\begin{lemma} \label{maximal} A maximal subfactor planar algebra is w-cyclic.
\end{lemma}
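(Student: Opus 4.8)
The plan is to exploit the fact that maximality means the only biprojections are $ e_1 $ and $ id $, so the lattice $ [e_1,id] $ is trivial. Since by Definition \ref{gener} the biprojection $ \langle u \rangle $ generated by any nonzero positive $ u $ is the smallest biprojection dominating $ u $, it must be either $ e_1 $ or $ id $ (by Theorem \ref{bisch} these are the Jones projections of the only intermediate subfactors $ N $ and $ M $). The strategy is then to produce a single minimal projection whose generated biprojection is forced to be $ id $, which by Definition \ref{wcy} is exactly w-cyclicity.

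First I would record that $ e_1 $ is itself a minimal projection of $ \mathcal{P}_{2,+} $: by irreducibility $ \mathcal{P}_{1,+}=\mathbb{C} $, the corner $ e_1 \mathcal{P}_{2,+} e_1 $ reduces to $ \mathbb{C} e_1 $. Consequently, for a minimal projection $ u $, the relation $ \langle u \rangle = e_1 $ holds only when $ u \preceq e_1 $, i.e. $ R(u)=u \le e_1 $; minimality of $ e_1 $ then forces $ u = e_1 $. In other words, $ e_1 $ is the unique minimal projection generating $ e_1 $, and every other minimal projection $ u $ satisfies $ \langle u \rangle \ne e_1 $, hence $ \langle u \rangle = id $ by the dichotomy above.

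Next I would guarantee the existence of a minimal projection distinct from $ e_1 $. Since a maximal subfactor is nontrivial (index $ > 1 $), we have $ id \ne e_1 $, so $ id - e_1 $ is a nonzero projection and $ \dim \mathcal{P}_{2,+} \ge 2 $. Picking any minimal projection $ u \le id - e_1 $ (equivalently, any minimal projection orthogonal to $ e_1 $) yields $ u \ne e_1 $, whence $ \langle u \rangle = id $ by the previous step. By Definition \ref{wcy} this exhibits the planar algebra as w-cyclic.

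I do not expect a genuine obstacle here: the whole content lies in the observation that maximality trivializes the biprojection lattice, after which the only things to check are the minimality of the Jones projection and the mere existence of a second minimal projection, both immediate. The one place demanding a little care is confirming that $ \langle u \rangle = e_1 $ forces $ u = e_1 $ for minimal $ u $, which uses that $ \preceq $ is comparison of range projections together with the minimality of $ e_1 $.
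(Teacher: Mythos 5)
Your proposal is correct and follows essentially the same route as the paper: maximality collapses the biprojection lattice to $\{e_1, id\}$, so any minimal projection $u \neq e_1$ must satisfy $\langle u \rangle = id$, which the paper states in one line. The extra details you supply (minimality of $e_1$ via irreducibility, hence $\langle u \rangle = e_1 \Rightarrow u = e_1$, and existence of a minimal projection under $id - e_1$) are exactly the right justifications left implicit in the paper's proof.
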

\begin{proof}
By maximality $ \langle u \rangle = id $ for any minimal projection $ u \neq e_1 $.
\end{proof} 
 
 \begin{definition} \label{topdef}
The top intermediate subfactor planar algebra is the intermediate associated to the top interval of the biprojection lattice.
\end{definition}
 
\begin{lemma} \label{Topw}
An irreducible subfactor planar algebra is w-cyclic if its top intermediate is so.
\end{lemma}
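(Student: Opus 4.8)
The plan is to mirror the group-theoretic Claim \ref{topred} used in the proof of Theorem \ref{ore2}. Write $ b_1, \dots, b_n $ for the coatoms of the biprojection lattice $ [e_1, id] $ and let $ t = \bigwedge_{i=1}^n b_i $ be their meet, so that the top interval is $ [t, id] $ and, by Definition \ref{topdef}, the top intermediate is the planar algebra $ \mathcal{P}(t, id) $. Assuming this top intermediate is w-cyclic, I would apply Proposition \ref{right} with $ b = t $: it produces a minimal projection $ v \in \mathcal{P}_{2,+} $ (of the full $ 2 $-box space) such that $ \langle t, v \rangle = id $. The only remaining task is to show that this same $ v $ already satisfies $ \langle v \rangle = id $, which by Definition \ref{wcy} is exactly the w-cyclicity of $ \mathcal{P} $.

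For this upgrade I would argue by contradiction using the lattice structure, exactly as $ g \not\in M $ forces $ \langle H, g \rangle = G $ in Claim \ref{topred}. Suppose $ \langle v \rangle \neq id $. Since $ [e_1, id] $ is finite, every proper biprojection lies below some coatom, so $ \langle v \rangle \le b_i $ for some $ i $. By definition $ t \le b_i $ as well (it is the meet of all the coatoms), and $ v \preceq \langle v \rangle \le b_i $ since $ \langle v \rangle $ is the smallest biprojection dominating $ v $ (Definition \ref{gener}). Thus $ b_i $ is a biprojection with $ b_i \succeq t $ and $ b_i \succeq v $, so by the characterization of $ \langle t, v \rangle $ as the smallest such biprojection (Definition \ref{gener}) we obtain $ \langle t, v \rangle \le b_i < id $. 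This contradicts $ \langle t, v \rangle = id $, and hence $ \langle v \rangle = id $.

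Putting this together, $ v $ is a minimal projection of $ \mathcal{P}_{2,+} $ with $ \langle v \rangle = id $, so $ \mathcal{P} $ is w-cyclic by Definition \ref{wcy}. I do not expect a genuine obstacle here: once Proposition \ref{right} is available, the content is the purely order-theoretic observation that an element generating $ id $ together with $ t $ must in fact generate $ id $ on its own, because no coatom can absorb it. The only point deserving care is the bookkeeping that the minimal projection supplied by Proposition \ref{right} indeed lives in the full $ 2 $-box space $ \mathcal{P}_{2,+} $, and not merely in a compressed corner, so that the conclusion $ \langle v \rangle = id $ is a statement about $ \mathcal{P} $ itself; this is precisely what Proposition \ref{right} guarantees.
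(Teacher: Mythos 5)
Your proposal is correct and follows essentially the same route as the paper: both invoke Proposition \ref{right} on the top intermediate $\mathcal{P}(t,id)$ to obtain a minimal projection $v \in \mathcal{P}_{2,+}$ with $\langle t,v \rangle = id$, and then use the lattice argument that no coatom $b_i$ can dominate both $t$ and $v$ (else $\langle t,v\rangle \le b_i < id$), so $\langle v \rangle = id$. The only difference is cosmetic: the paper phrases the contradiction as $v \le b_i$ being impossible, while you phrase it as $\langle v \rangle \neq id$ being impossible; these are the same argument.
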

\begin{proof}
Let $ b_1, \dots , b_n $ be the coatoms in $ [e_1,id] $ and $ t = \bigwedge_{i=1}^n b_i $. By assumption and Proposition \ref{right}, there is a minimal projection $ v \in \mathcal{P}_{2,+} $ with $ \langle t,v \rangle = id $. If $ \exists i $ such that $ v \le b_i $, then $ \langle t,v \rangle \le b_i $, contradiction. So $ \forall i $, $ v \not \le b_i $ and then $ \langle v \rangle = id $. 
\end{proof}
\begin{definition}
Let $ h(\mathcal{P}) $ be the height of the biprojection lattice $[e_1,id]$. Note that $ h(\mathcal{P})<\infty $ because the index is finite.
\end{definition} 
\begin{theorem} \label{centheo}
If the biprojections in $ \mathcal{P}_{2,+} $ are central and form a distributive lattice, then $ \mathcal{P} $ is w-cyclic.
\end{theorem}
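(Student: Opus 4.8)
The plan is to mirror, step for step, the group-theoretic proof of Theorem \ref{ore2}, replacing group elements by minimal projections and the product $ab$ by a minimal subprojection of the coproduct $a*c$. First I would reduce to the boolean case: by Lemma \ref{Topw} it suffices to prove that the top intermediate planar algebra $\mathcal{P}(t,id)$ is w-cyclic, where $t$ is the meet of the coatoms of $[e_1,id]$, and by Lemma \ref{topBn} its biprojection lattice $[t,id]$ is boolean. The biprojections of $\mathcal{P}(t,id)$ remain central, since under $r_t$ (Theorem \ref{2iso}) they correspond to the central biprojections of $[t,id]\subseteq[e_1,id]$ and $r_t$ preserves the center. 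So the problem reduces to showing that an irreducible subfactor planar algebra whose biprojections are central and form a \emph{boolean} lattice is w-cyclic.

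I would prove the boolean case by induction on the height $h(\mathcal{P})$. The base case $h=1$ is a maximal subfactor, w-cyclic by Lemma \ref{maximal}. For $h\ge 2$, pick a coatom $b$ of $[e_1,id]$ and its boolean complement $b^{\complement}$, so $b\wedge b^{\complement}=e_1$ and $b\vee b^{\complement}=id$. The intervals $[e_1,b]$ and $[e_1,b^{\complement}]$ are boolean of smaller height with central biprojections, hence $\mathcal{P}(e_1,b)$ and $\mathcal{P}(e_1,b^{\complement})$ are w-cyclic by induction. Using the minimal-central-projection formulation of Definition \ref{wcy} transported by the isomorphism $l_b$ of Theorem \ref{2iso} (together with Proposition \ref{left}), I would produce minimal central projections $a,c\in\mathcal{P}_{2,+}$ with $\langle a\rangle=b$ and $\langle c\rangle=b^{\complement}$; note that a minimal central projection of $b\mathcal{P}_{2,+}b$ remains one in $\mathcal{P}_{2,+}$ because $b$ is central.

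Now comes the combination step, the analogue of $g=ab$. By Lemma \ref{th}(1) the coproduct $a*c$ is positive, so I can choose a minimal projection $v\le R(a*c)$, i.e. $v\preceq a*c$. Applying Lemma \ref{pfr} to $v\preceq a*c$ yields projections $a'\preceq v*\overline{c}$ and $c'\preceq\overline{a}*v$ with $aa'\ne 0$ and $cc'\ne 0$. Since $\langle\overline{a}\rangle=\overline{\langle a\rangle}=\overline{b}=b$ (and likewise $\langle\overline{c}\rangle=b^{\complement}$), Lemma \ref{th}(2) and Theorem \ref{biproj} give $\overline{a}*v\preceq\langle a,v\rangle$ and $v*\overline{c}\preceq\langle c,v\rangle$, hence $c'\le\langle a,v\rangle$ and $a'\le\langle c,v\rangle$. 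This is the point where centrality is indispensable: $\langle a,v\rangle$ is a biprojection, hence central, and $c$ is minimal central with $cc'\ne 0$ and $c'\le\langle a,v\rangle$, so $c\langle a,v\rangle\ne 0$ forces $c\le\langle a,v\rangle$. Thus $id=b\vee b^{\complement}=\langle a\rangle\vee\langle c\rangle\le\langle a,v\rangle=b\vee\langle v\rangle$, giving $b\vee\langle v\rangle=id$, and symmetrically $b^{\complement}\vee\langle v\rangle=id$.

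Finally I would invoke distributivity exactly as in Claim \ref{preore2}: since $b\wedge b^{\complement}=e_1$,
\[
\langle v\rangle=\langle v\rangle\vee(b\wedge b^{\complement})=(\langle v\rangle\vee b)\wedge(\langle v\rangle\vee b^{\complement})=id\wedge id=id,
\]
so the minimal projection $v$ generates $id$ and $\mathcal{P}$ is w-cyclic. I expect the main obstacle to be this combination step, and above all the \emph{centrality upgrade} that turns the non-orthogonality relations $cc'\ne 0$, $aa'\ne 0$ produced by Lemma \ref{pfr} into the genuine inclusions $c\le\langle a,v\rangle$, $a\le\langle c,v\rangle$. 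This is precisely where the hypothesis that all biprojections are central enters, and it is the reason the centrality-free version (Conjecture \ref{introconj}) is hard.
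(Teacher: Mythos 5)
Your proposal is correct and takes essentially the same route as the paper's own proof: reduction to the boolean top interval (Lemmas \ref{topBn} and \ref{Topw}), induction on the height with the base case given by Lemma \ref{maximal}, generators of $b$ and $b^{\complement}$ obtained from Proposition \ref{left}, a minimal projection below the coproduct of the two generators, Lemma \ref{pfr} for the exchange step, the centrality upgrade of non-orthogonality to domination, and distributivity to conclude. The only cosmetic differences are that you carry minimal \emph{central} projections throughout, whereas the paper uses minimal projections together with the central-support identity $Z(u')=Z(u)$ (the same idea), and that you apply distributivity after the centrality step rather than before.
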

\begin{proof} 
By Lemma \ref{c*}, we can make an induction on $ h(\mathcal{P}) $. If $ h(\mathcal{P})=1 $, then we apply Lemma \ref{maximal}. Now suppose that the theorem holds for $ h(\mathcal{P})<n $, we will prove it for $ h(\mathcal{P})=n \ge 2 $. By Lemmas \ref{topBn} and \ref{Topw}, we can assume the biprojection lattice to be boolean. 
For $ b $ in the open interval $(e_1,id)$, its complementary  $ b^{\complement} $ (see Section \ref{baslat}) is also in $(e_1,id)$. By induction and Proposition \ref{left}, there are minimal projections $ u, v $ such that $ b = \langle u \rangle $ and $ b^{\complement} = \langle v \rangle $. Take any minimal projection $ c \preceq u * v $, then $$ \langle c \rangle = \langle c \rangle \vee e_1 = \langle c \rangle \vee (b \wedge b^{\complement}) = \langle c \rangle \vee (\langle u \rangle \wedge \langle v \rangle),$$ so \textit{by distributivity} $$ \langle c \rangle = (\langle c \rangle \vee \langle u \rangle) \wedge (\langle c \rangle \vee \langle v \rangle) = \langle c,u \rangle \wedge \langle c , v \rangle. $$ 
Then by Lemma \ref{pfr}, $ \langle c \rangle = \langle u',c,v \rangle \wedge \langle u,c,v' \rangle $ with $ u', v' $ minimal projections and $ u u', v v' \neq 0 $, so in particular the central support $ Z(u') = Z(u) $ and $ Z(v') = Z(v) $. Now by assumption, every biprojection is central, so $ u \le Z(u') \le \langle u',c,v \rangle $ and $ v \le Z(v') \le \langle u,c,v' \rangle $, then $ \langle c \rangle =id $. \end{proof} 
\begin{theorem} \label{thm}
A cyclic subfactor planar algebra is w-cyclic.
\end{theorem}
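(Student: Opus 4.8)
The plan is to observe that the statement is an immediate specialization of Theorem \ref{centheo}, so the only real task is to check that the defining hypotheses of a cyclic planar algebra imply the hypotheses of that theorem. Recall that, by definition, a cyclic subfactor planar algebra is both Dedekind and distributive: every biprojection is normal, and the biprojection lattice $[e_1,id]$ is distributive. The distributivity hypothesis is shared verbatim with Theorem \ref{centheo}, so the point that needs articulating is how the Dedekind condition supplies the centrality hypothesis.

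First I would unpack Definition \ref{normal}: a biprojection $b$ is normal precisely when it is bicentral, meaning that both $b$ and its Fourier transform $\mathcal{F}(b)$ are central in their respective $2$-box spaces. In particular, bicentrality is strictly stronger than centrality, since it is the conjunction of ``$b$ is central in $\mathcal{P}_{2,+}$'' with an additional condition on $\mathcal{F}(b)$. Hence, in a Dedekind planar algebra, every biprojection $b$ is in particular a central element of $\mathcal{P}_{2,+}$. This is exactly the first half of the hypothesis of Theorem \ref{centheo}.

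With this in hand the argument closes in one line: a cyclic planar algebra has all biprojections central in $\mathcal{P}_{2,+}$ (from Dedekind) and a distributive biprojection lattice (by assumption), so both hypotheses of Theorem \ref{centheo} are met, and that theorem yields a minimal projection $u$ with $\langle u \rangle = id$, i.e. $\mathcal{P}$ is w-cyclic.

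I expect no genuine obstacle here, because all the substantive work, namely the induction on $h(\mathcal{P})$ together with the boolean-top reduction (Lemmas \ref{topBn}, \ref{Topw}) and the distributivity-plus-centrality computation $\langle c \rangle = \langle u',c,v\rangle \wedge \langle u,c,v'\rangle = id$, has already been carried out in the proof of Theorem \ref{centheo}. The one thing worth flagging is that the proof uses only centrality of the biprojections, not full bicentrality; the extra strength of the normality assumption is what is really needed elsewhere (for the stability and duality statements such as Lemma \ref{c*}), and it is also what separates the genuinely ``cyclic'' class from the merely ``w-cyclic'' one, as the $S_3$ example in the introduction shows. For the present implication, however, only the weaker centrality is invoked, which is why Conjecture \ref{introconj} proposes dropping the centrality assumption altogether.
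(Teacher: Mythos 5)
Your proof is correct and matches the paper's own argument exactly: the paper likewise deduces Theorem \ref{thm} immediately from Theorem \ref{centheo}, noting that a normal biprojection is by definition bicentral, hence a fortiori central. Your additional remarks on where full bicentrality is actually needed (e.g.\ Lemma \ref{c*}) are accurate but not required for this implication.
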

\begin{proof} Immediate by Theorem \ref{centheo} because a normal biprojection is by definition bicentral, so a fortiori central.
\end{proof}
\section{Extension for small distributive lattices}
We extend Theorem \ref{centheo} without assuming the biprojections to be central, but for distributive lattices with less than $ 32 $ elements.
Because the top lattice of a distributive lattice is boolean (Lemma \ref{topBn}), we can reduce the proof to $ \mathcal{B}_n $ with $ n<5 $.
\begin{definition}
An irreducible subfactor planar algebra is said to be boolean (or $ \mathcal{B}_n $) if its biprojection lattice is boolean (of rank $n$).
\end{definition}
\begin{proposition} \label{l<} An irreducible subfactor planar algebra such that the coatoms $ b_1, \dots, b_n \in [e_1,id] $ satisfy $ \sum_i \frac{1}{|id:b_i|} \le 1 $, is w-cyclic.
 \end{proposition}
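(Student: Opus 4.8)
The plan is to establish w-cyclicity directly by producing a minimal central projection whose generated biprojection is $ id $, which is one of the equivalent conditions in Definition \ref{wcy}. First I would translate the hypothesis into trace language. Since $ tr(id)=1 $ and $ |id:b_i| = tr(id)/tr(b_i) $, we have $ 1/|id:b_i| = tr(b_i) $, so the assumption reads $ \sum_i tr(b_i) \le 1 $.

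Next I would argue by contradiction, assuming $ \mathcal{P} $ is \emph{not} w-cyclic. By Definition \ref{wcy} this means that no minimal central projection generates $ id $; equivalently, for every minimal central projection $ z $ of $ \mathcal{P}_{2,+} $ the biprojection $ \langle z \rangle $ is proper, hence in the finite biprojection lattice it lies below some coatom $ b_i $, which forces $ z \le b_i $. Setting $ C_i = \{ z : z \le b_i \} $ (the blocks contained in $ b_i $), every minimal central projection then occurs in at least one $ C_i $.

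The heart of the proof is a trace count. For each $ i $, the orthogonal central projections in $ C_i $ sum to a projection below $ b_i $, so $ tr(b_i) \ge \sum_{z \in C_i} tr(z) $; summing over $ i $ and using that each $ z $ lies in at least one $ C_i $ yields
$$ \sum_i tr(b_i) \ge \sum_i \sum_{z \in C_i} tr(z) \ge \sum_z tr(z) = tr(id) = 1. $$
Together with the hypothesis $ \sum_i tr(b_i) \le 1 $, every inequality must be an equality.

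The main obstacle — and the only delicate point — is ruling out this equality case. Equality forces (i) each $ b_i = \sum_{z \in C_i} z $ to be \emph{central} (by faithfulness of the trace, since $ \sum_{z\in C_i} z \le b_i $ with equal trace), and (ii) each block to lie below exactly one coatom, so the sets $ C_i $ are pairwise disjoint. If there are at least two coatoms, then for $ i \neq j $ the ranges $ R(b_i) $ and $ R(b_j) $ are orthogonal, whereas $ e_1 \le b_i $ and $ e_1 \le b_j $ by Theorem \ref{biproj}, forcing $ R(e_1) \subseteq R(b_i) \cap R(b_j) = 0 $, i.e. $ e_1 = 0 $, which contradicts $ tr(e_1) = \delta^{-2} > 0 $. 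If there is a single coatom, equality forces $ b_1 = \sum_z z = id $, contradicting that $ b_1 $ is a proper coatom. Either way the assumption collapses, so some minimal central projection $ z $ satisfies $ z \not\le b_i $ for all $ i $, whence $ \langle z \rangle = id $ and $ \mathcal{P} $ is w-cyclic. (If a minimal projection is preferred to a minimal central one, Theorem \ref{mini} upgrades $ z $ to a minimal $ u \le z $ with $ \langle u \rangle = id $.)
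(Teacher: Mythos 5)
Your proof is correct, but it is not the paper's argument; the two differ substantively. The paper first reduces to the case $n>1$ via Lemmas \ref{maximal} and \ref{Topw}, and then studies the single operator $\sum_i b_i$: it argues that if $\sum_i b_i \sim id$ then $\sum_i b_i \ge id$, whence the trace hypothesis forces $\sum_i b_i = id$, contradicting $n e_1 \le id$ for $n>1$; therefore $\sum_i b_i \prec id$, and any minimal projection $u$ with $u \not\le R(\sum_i b_i)$ satisfies $u \not\le b_i$ for all $i$, so $\langle u \rangle = id$. You instead run the count through the central (block) decomposition of $\mathcal{P}_{2,+}$: assuming no minimal central projection generates $id$, every block lies under some coatom, and comparing each $tr(b_i)$ with the traces of the blocks it dominates forces equality throughout $\sum_i tr(b_i) \ge \sum_z tr(z) = tr(id) = 1$; the equality case is then excluded by orthogonality of distinct blocks together with $e_1 \le b_i, b_j$ when $n \ge 2$, and by $b_1 = id$ when $n=1$. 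What your route buys: it is self-contained (no appeal to Lemmas \ref{maximal} and \ref{Topw}, with the $n=1$ case absorbed into the same count), and it avoids the paper's step ``$\sum_i b_i \sim id$ implies $\sum_i b_i \ge id$'' --- which is not a general fact about sums of non-commuting projections (such a sum can have full range while having eigenvalues in $(0,1)$), so it requires justification specific to biprojections; comparing each $b_i$ separately with the sum of the central subprojections below it sidesteps that delicacy entirely, since there the domination $\sum_{z \in C_i} z \le b_i$ is exact. What the paper's route buys: granted its reduction lemmas it is shorter on the page, and it directly produces a minimal (not merely minimal central) projection under no coatom, though by Definition \ref{wcy} and Theorem \ref{mini} the two formulations of the conclusion are interchangeable.
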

 \begin{proof} Firstly, by Lemmas \ref{maximal} and \ref{Topw}, we can assume that $ n>1 $.  By Definition $ |id:b_i| = \frac{tr(id)}{tr(b_i)} $ so by assumption $ \sum_i tr(b_i) \le tr(id) $.  If $ \sum_i b_i \sim id $ then $ \sum_i b_i \ge id $, but $ \sum_i tr(b_i) \le tr(id) $ so $ \sum_i b_i = id $. Now $ \forall i \ e_1 \le b_i $, so $ ne_1 \le \sum_i b_i = id $, contradiction with $ n>1 $.  So $ \sum_i b_i \prec id $, which implies the existence of a minimal projection $ u \not \le b_i \ \forall i $, which means that $ \langle u \rangle = id $. 
\end{proof}
\begin{remark} The converse is false, $ (R \subset R \rtimes \mathbb{Z}/30) $ is a counter-example, because $ 1/2+1/3+1/5 = 31/30 >1 $. \end{remark}
\begin{corollary} \label{two} An irreducible subfactor planar algebra with at most two coatoms in $ [e_1,id] $ is w-cyclic.
 \end{corollary}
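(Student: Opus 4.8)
The plan is to deduce this from Proposition \ref{l<}, by verifying its hypothesis. First I would dispose of the degenerate cases: if $[e_1,id]$ has no coatom then $e_1=id$ and w-cyclicity is vacuous, while if it has a single coatom $b_1$ then the top interval is the maximal interval $[b_1,id]$, so the top intermediate is w-cyclic by Lemma \ref{maximal} and the claim follows from Lemma \ref{Topw} (this is exactly the reduction already carried out at the start of the proof of Proposition \ref{l<}). The only substantial case is thus that of exactly two coatoms $b_1,b_2$, where by Proposition \ref{l<} it suffices to show
$$ \frac{1}{|id:b_1|}+\frac{1}{|id:b_2|}=tr(b_1)+tr(b_2)\le tr(id)=1. $$

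Write $t:=b_1\wedge b_2$. Since $b_1,b_2$ are distinct coatoms, their join $b_1\vee b_2$ is a biprojection lying below no coatom (if $b_1\vee b_2\le b_j$ then the other coatom would be $\le b_j$), hence $b_1\vee b_2=id$. Moreover each inclusion $t\le b_i$ corresponds to a proper irreducible intermediate inclusion, whose index $|b_i:t|$ is a Jones index strictly larger than $1$; since the smallest Jones value above $1$ is $2$, we get $|b_i:t|\ge 2$. I would then invoke the subfactor analogue of the subgroup product formula, namely the index inequality
$$ |b_1\vee b_2:t|\ \ge\ |b_1:t|\cdot|b_2:t|, $$
valid because $t=b_1\wedge b_2$ (for groups this reads $|\langle H_1,H_2\rangle|\ge|H_1H_2|=|H_1|\,|H_2|/|H_1\cap H_2|$). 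As $|b_1:t|,|b_2:t|\ge 2$ force $|b_1:t|+|b_2:t|\le|b_1:t|\cdot|b_2:t|$, and since $|b_i:t|=tr(b_i)/tr(t)$ and $|id:t|=tr(id)/tr(t)$, I obtain
$$ tr(b_1)+tr(b_2)=tr(t)\big(|b_1:t|+|b_2:t|\big)\le tr(t)\,|b_1:t|\,|b_2:t|\le tr(t)\,|id:t|=tr(id)=1, $$
using $b_1\vee b_2=id$ in the last inequality. Proposition \ref{l<} then yields w-cyclicity. Note that distributivity is never used, consistently with the statement.

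The main obstacle is the index inequality $|b_1\vee b_2:t|\ge|b_1:t|\,|b_2:t|$ for biprojections. One route is to reduce to $t=e_1$ by passing to the top-interval planar algebra $\mathcal{P}(t,id)$ via $r_t$ (Theorem \ref{2iso}), where it becomes $tr(b_1)\,tr(b_2)\le tr(e_1)=\delta^{-2}$ for biprojections with $b_1\wedge b_2=e_1$. Here I would argue through the coproduct: since $b_i\ge e_1$ one has $b_1b_2\neq 0$, so $e_1\preceq b_1*\overline{b_2}=b_1*b_2$ by Lemma \ref{pre2} together with $\overline{b_2}=b_2$ (Theorem \ref{biproj}); combining the trace identity $tr(b_1*b_2)=\delta\,tr(b_1)\,tr(b_2)$ with a control of the range of $b_1*b_2$ (which sits inside $b_1\vee b_2$) should pin down the bound. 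The delicate point is precisely this range/dimension estimate, the subfactor ``parallelogram'' inequality, which is where the genuine content lies; the surrounding computation is elementary. Alternatively one simply cites it from the theory of intermediate subfactor lattices.
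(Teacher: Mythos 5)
Your reduction to Proposition \ref{l<}, your handling of the degenerate cases (zero or one coatom, via Lemmas \ref{maximal} and \ref{Topw}), and the observation that there is no Jones index strictly between $1$ and $2$ are all fine. The fatal problem is the ``index inequality'' $|b_1\vee b_2:t|\ge |b_1:t|\cdot|b_2:t|$ (with $t=b_1\wedge b_2$) on which the whole argument rests: this is \emph{false} for biprojections, and it cannot be cited from the literature. The group product formula only supports it for the biprojection lattice of $(R\subseteq R\rtimes G)$, where biprojections correspond covariantly to subgroups; but biprojection lattices also arise as \emph{reversed} subgroup lattices, via $(R^G\subseteq R)$, and under reversal the inequality flips. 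Concretely, take $\mathcal{P}(R^{S_3}\subseteq R)$ and let $b_1,b_2$ be the biprojections of two distinct order-$2$ subgroups $K_1,K_2\le S_3$, so $tr(b_i)=1/|K_i|=1/2$. These are coatoms of $[e_1,id]$, with $b_1\wedge b_2=e_1$ (because $\langle K_1,K_2\rangle=S_3$) and $b_1\vee b_2=id$ (because $K_1\cap K_2=\{e\}$); yet $|b_1\vee b_2:e_1|=6$ while $|b_1:e_1|\cdot|b_2:e_1|=3\cdot3=9$. So the inequality fails in exactly the configuration you use (two distinct coatoms with meet $t$ and join $id$): your chain breaks at $tr(t)\,(|b_1:t|\,|b_2:t|)\le tr(t)\,|id:t|$, since here the left side is $9/6>1$. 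The same example also kills the reduced form you propose to prove via the coproduct, namely ``$tr(b_1)\,tr(b_2)\le tr(e_1)$ whenever $b_1\wedge b_2=e_1$'': here $tr(b_1)\,tr(b_2)=1/4>1/6=tr(e_1)$. (This $S_3$ example has four coatoms, so it does not contradict the corollary itself; but it destroys the general lemma your proof invokes, and you give no argument that would rescue it using the hypothesis of exactly two coatoms.)

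The fix is much shorter and is the paper's actual proof: apply the Jones index restriction directly to $|id:b_i|$ rather than to $|b_i:t|$. Each coatom $b_i$ is $e^M_{K_i}$ for a proper intermediate subfactor $K_i\subsetneq M$, so $|id:b_i|=tr(id)/tr(b_i)=|M:K_i|$ is the Jones index of an irreducible subfactor with $K_i\ne M$, hence $|id:b_i|\ge 2$. With at most two coatoms this gives $\sum_i 1/|id:b_i|\le 1/2+1/2=1$, and Proposition \ref{l<} concludes. You had precisely the needed fact in hand (no index in $(1,2)$) but applied it one level too low in the lattice; no product formula, parallelogram inequality, or control of the range of $b_1*b_2$ is needed.
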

\begin{proof} $ \sum_i \frac{1}{ |id:b_i| } \le 1/2+1/2 $, the result follows by Proposition \ref{l<}. \end{proof}
\begin{examples} Every $ \mathcal{B}_2 $ subfactor planar algebra is w-cyclic. 
 $$ \begin{tikzpicture}
\node (A1) at (0,0) { $ id $ };
\node (A2) at (-1,-1) { $ b_1 $ };
\node (A4) at (1,-1) { $ b_2 $ };
\node (A5) at (0,-2) { $ e_1 $ };
\tikzstyle{segm}=[-,>=latex, semithick]
\draw [segm] (A1)to(A2); \draw [segm] (A1)to(A4);
\draw [segm] (A2)to(A5);\draw [segm] (A4)to(A5);
\end{tikzpicture} $$ 
 \end{examples}

\begin{lemma} \label{wmin} 
Let $ u,v \in \mathcal{P}_{2,+} $ be minimal projections. If $ v \not \le \langle u \rangle $ then $ \exists c \preceq u * v $ and $ \exists w \preceq \overline{u} * c $ minimal projections such that $ w \not \le \langle u \rangle $.
\end{lemma}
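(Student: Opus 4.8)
The plan is to run a Frobenius-reciprocity style argument: recover $v$ inside $\overline{u}*(u*v)$ and then localize the failure of containment. First I would record the key inclusion $v \preceq \overline{u}*(u*v)$. Since $\overline{u}$ is a projection we have $\overline{u}\cdot\overline{u}=\overline{u}\neq 0$, so Lemma \ref{pre2} gives $e_1 \preceq \overline{u}*u$; then Lemma \ref{th}(2) (with $v \preceq v$) yields $e_1 * v \preceq (\overline{u}*u)*v$, and since $e_1 * v = \delta^{-1}v \sim v$, associativity of the coproduct gives $v \preceq \overline{u}*(u*v)$. In particular $u*v \neq 0$, so there is at least one minimal projection $\preceq u*v$.

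Next I would decompose $u*v$ spectrally. Write the range projection $R(u*v)$ as a sum $\sum_j c_j$ of orthogonal minimal projections; each $c_j \le R(u*v)$, so $c_j \preceq u*v$, and $u*v \sim \sum_j c_j$. Coproducting on the left with $\overline{u}$ and using Lemma \ref{th}(2) in both directions gives $\overline{u}*(u*v) \sim \overline{u}*\sum_j c_j = \sum_j \overline{u}*c_j$. As each $\overline{u}*c_j$ is positive by Lemma \ref{th}(1), the range of a sum of positive operators being the join of the ranges, I obtain
$$ R(\overline{u}*(u*v)) = \bigvee_j R(\overline{u}*c_j). $$
Combined with the first step this gives $v = R(v) \le \bigvee_j R(\overline{u}*c_j)$.

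Finally comes the contradiction step. Suppose no pair $(c,w)$ as claimed exists, i.e.\ for every $j$ and every minimal projection $w \preceq \overline{u}*c_j$ one has $w \le \langle u \rangle$. Decomposing each $R(\overline{u}*c_j)$ into orthogonal minimal projections then forces $R(\overline{u}*c_j) \le \langle u \rangle$ for all $j$, hence $\bigvee_j R(\overline{u}*c_j) \le \langle u \rangle$, and therefore $v \le \langle u \rangle$, contradicting the hypothesis $v \not\le \langle u \rangle$. Thus some $c = c_j \preceq u*v$ and some minimal projection $w \preceq \overline{u}*c$ satisfy $w \not\le \langle u \rangle$, which is exactly the assertion.

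I do not expect a genuine obstacle here: the two auxiliary facts needed (that $R$ of a sum of positive operators is the join of the summands' ranges, and that $\sim$ is preserved under coproducting with a fixed operator) are immediate from Lemma \ref{th}. The entire content is concentrated in the reciprocity inclusion $v \preceq \overline{u}*(u*v)$, which is what converts the non-containment $v \not\le \langle u \rangle$ into the existence of a witnessing minimal projection $w$ downstream of $\overline{u}*c$.
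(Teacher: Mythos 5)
Your proof is correct and takes essentially the same approach as the paper's: both argue by contradiction, decompose $u*v$ (and then each $\overline{u}*c_i$) into minimal projections, and derive the contradiction from the reciprocity inclusion $v \sim e_1 * v \preceq (\overline{u}*u)*v = \overline{u}*(u*v) \preceq \langle u \rangle$. The additional details you spell out (Lemma \ref{pre2} giving $e_1 \preceq \overline{u}*u$ since $\overline{u}\cdot\overline{u}\neq 0$, and the range-join bookkeeping for sums of positive operators) are precisely the steps the paper leaves implicit.
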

\begin{proof} Assume that $ \forall c \preceq u * v $ and $ \forall w \preceq \overline{u} * c $ we have $ w \le \langle u \rangle $. Now there are minimal projections $ (c_i)_i $ and $ (w_{i,j})_{i,j} $ such that $ u * v \sim \sum_i c_i $ and $ \overline{u} * c_i \sim \sum_j w_{i , j} $. It follows that $ u * v \sim \sum_{i,j} w_{i,j} \preceq \langle u \rangle $, but 
 $$ v \sim e_1 * v \preceq (\overline{u} * u)*v = \overline{u} * (u*v) \preceq \langle u \rangle, $$ 
which is in contradiction with $ v \not \le \langle u \rangle $. \end{proof} 
 
For the distributive case, we can upgrade Proposition \ref{l<} as follows:
\begin{theorem} \label{le2}
A distributive subfactor planar algebra such that the coatoms $ b_1, \dots, b_n \in [e_1,id] $ satisfy $ \sum_i \frac{1}{|id:b_i|} \le 2 $, is w-cyclic.
\end{theorem}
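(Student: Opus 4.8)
The plan is to reduce to a boolean lattice and then, arguing by contradiction, to convert the ``full range'' phenomenon behind Proposition \ref{l<} into a quantitative statement that exactly consumes the budget $2$, with distributivity playing the role that centrality played in Theorem \ref{centheo}.

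First I would set up the reductions. Since $tr(id)=1$ we have $1/|id:b_i|=tr(b_i)$, so the hypothesis reads $\sum_i tr(b_i)\le 2$. By Lemma \ref{topBn} the top interval of $[e_1,id]$ is boolean, and by Lemma \ref{Topw} it is enough to prove that the top intermediate is w-cyclic; passing to it changes neither the coatoms $b_1,\dots,b_n$ nor the numbers $|id:b_i|$, so the hypothesis transfers verbatim and the lattice becomes $\mathcal B_n$ with $\bigwedge_i b_i=e_1$ and $\bigvee_i b_i=id$. When $n\le 2$ I apply Corollary \ref{two}, and when $\sum_i tr(b_i)\le 1$ I apply Proposition \ref{l<}, so I may assume $n\ge 3$ and $1<\sum_i tr(b_i)\le 2$. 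I note that induction on the rank (or on the height, as in Theorem \ref{centheo}) is \emph{not} available: the coatoms of $[e_1,b_1]$ are the $b_1\wedge b_j$, and $tr(b_1\wedge b_j)\le tr(b_1)$ only yields $\sum_{j\ne 1}1/|b_1:b_1\wedge b_j|\le n-1$, which breaks the hypothesis once $n\ge 4$. The argument must therefore be global.

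Next the contradiction setup. Suppose $\mathcal P$ is not w-cyclic. Then every minimal projection $u$ has $\langle u\rangle$ proper, hence $\langle u\rangle\le b_i$ for some $i$, and since $u\preceq\langle u\rangle$ this gives $u\le b_i$. Writing $q_i=id-b_i$, a genuine projection because $e_1\le b_i\le id$, the relation $u\le b_i$ is the same as $u\perp q_i$; so the assumption says precisely that no minimal projection meets all the $q_i$ at once. Block by block this forces each matrix block of $\mathcal P_{2,+}$ to be swallowed by a single $b_i$, since a finite union of proper subspaces never fills a block; in particular $\sum_i b_i$ has full range. This is exactly where Proposition \ref{l<} concludes, its bound $\sum_i tr(b_i)\le 1$ contradicting $n\,e_1\le\sum_i b_i$; with the weaker bound $2$ the full-range conclusion is no longer absurd on its own. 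The decisive step is to spend the extra budget through one coproduct: picking a complementary pair of proper biprojections $b,b^{\complement}$ (say an atom and its coatom) and minimal projections $u\le b$, $v\le b^{\complement}$ with $v\not\le\langle u\rangle$, one has $\langle u\rangle\wedge\langle v\rangle\le b\wedge b^{\complement}=e_1$, so for a minimal $c\preceq u*v$ distributivity gives $\langle c\rangle=\langle c,u\rangle\wedge\langle c,v\rangle$, while Lemmas \ref{pfr} and \ref{wmin} supply minimal projections $u',v',w$ with $uu',vv'\neq 0$, $w\preceq\overline u*c$ and $w\not\le\langle u\rangle$. Where Theorem \ref{centheo} used centrality to upgrade $u'\sim u$ to $u\le\langle u',c,v\rangle$, here I would instead estimate traces under the coproduct and show that, were all the minimal constituents produced this way trapped under coatoms, every block would have to be covered at least \emph{twice}, i.e.\ $z_\alpha\le b_i\wedge b_j$ for two indices, forcing $\sum_i tr(b_i)\ge 2$ with equality only in a degenerate boundary case to be excluded separately.

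The main obstacle is exactly this quantitative step: replacing the centrality argument of Theorem \ref{centheo} by a counting argument that promotes the qualitative full-range dichotomy of Proposition \ref{l<} to the sharp threshold $2$. The difficulty is twofold, to control the minimal projections $w$ emerging from $\overline u*c$, which need not be comparable to any biprojection, and to prove that a single-fold covering of the blocks by the coatoms is incompatible with distributivity once $\sum_i 1/|id:b_i|\le 2$. I expect the governing inequality to be a comparison between $\sum_i tr(b_i)$ and the trace of the coproduct $u*v$ of two minimal generators, the factor $2$ reflecting the two complementary coatoms that each constituent of $u*v$ would otherwise be forced to lie under.
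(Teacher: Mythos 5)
Your reductions are sound and coincide with the paper's (pass to the boolean top via Lemmas \ref{topBn} and \ref{Topw}, then assume $n\ge 3$ via Corollary \ref{two}), and you have correctly identified the two mechanisms that must interact: the distributivity-plus-coproduct argument of Theorem \ref{centheo} and the block-covering count behind Proposition \ref{l<}. But there is a genuine gap, and you name it yourself: the ``decisive step'' --- upgrading the single coverage given by the non-w-cyclic assumption to a double coverage of every block by ``estimating traces under the coproduct'' --- is exactly what you do not carry out, and it cannot be carried out in the unified form you propose. The non-w-cyclic assumption alone yields only that each minimal central projection $p_s$ sits under some coatom; no coproduct trace inequality turns this into double coverage, and indeed no such inequality appears in the actual proof.

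What the paper does instead is a dichotomy on the projection $K:=\bigwedge_{i\neq j}(b_i\wedge b_j)^{\perp}$, and this is the missing idea. If $K\neq 0$, no counting occurs at all: take a minimal projection $u\le K$; either $\langle Z(u)\rangle=id$ and we are done, or $\langle u\rangle=\langle Z(u)\rangle=b_i$ for some coatom $b_i$ (any biprojection containing $u$ and strictly below $b_i$ would force $u$ under some $b_i\wedge b_j$, which $u\le K$ forbids). Taking a minimal projection $v$ generating the atom $b_i^{\complement}$, Lemma \ref{wmin} yields $c\preceq u*v$ and $w\preceq\overline{u}*c$ with $w\not\le b_i$, so $\langle u,w\rangle=id$ by maximality of the coatom $b_i$ --- this disposes of your first worry: $w$ need not be compared to any biprojection, it only has to escape $b_i$. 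Lemma \ref{pfr} yields a minimal projection $u'\preceq c*\overline{v}\preceq\langle c,v\rangle$ with $uu'\neq 0$, hence $u'$ lies in the block of $u$; since $u\le K$, $u'$ avoids every $b_i\wedge b_j$, and $u'\le Z(u)\le b_i$, so $\langle u'\rangle=b_i$. This purely lattice-theoretic fact is the substitute for centrality --- not a trace estimate --- and distributivity then gives $\langle c\rangle=\langle u,c\rangle\wedge\langle c,v\rangle\ge\langle u,w\rangle\wedge\langle u',v\rangle=id$. If instead $K=0$, then $\bigwedge_{j\neq i}b_j^{\perp}=0$ for every $i$, which blockwise (writing $p_{i,s}=b_ip_s$) reads $p_s=\bigvee_{j\neq i}p_{j,s}$: the coatoms other than any fixed one already cover each block. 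This, and not distributivity or coproducts, is the source of the second layer of coverage; combined with single coverage (some $p_{i,s}=p_s$, unless some $p_s$ escapes all coatoms, in which case $\langle p_s\rangle=id$ directly), it gives $\sum_i tr(b_i)\ge 2\,tr(id)+(n-2)\,tr(e_1)$. Note also that your feared boundary case $\sum_i tr(b_i)=2$ never needs separate treatment: the $e_1$-block is covered $n\ge 3$ times, so the count exceeds the budget strictly. In short, your plan stalls precisely where this case split on $K$ is required.
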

\begin{proof}
By Lemmas \ref{topBn} and \ref{Topw}, we can assume the subfactor planar algebra to be boolean. \\
If $ K:=\bigwedge_{i,j, i \neq j} (b_i \wedge b_j)^{\perp} \neq 0 $, then consider $ u \le K $ a minimal projection, and $ Z(u) $ its central support. If $ \langle Z(u) \rangle = id $, then we are ok. Else $ \exists i $ such that $ \langle u \rangle = \langle Z(u) \rangle = b_i $. But $ b_i^{\complement} $ is an atom in $ [e_1,id] $, so there is a minimal projection $v$ such that $ b_i^{\complement} = \langle v \rangle $. Recall that $ b_i \wedge b_i^{\complement} = e_1 $, so $ v \not \le \langle u \rangle $, and by Lemma \ref{wmin}, there are minimal projections $ c \preceq u * v $ and $ w \preceq \overline{u} * c $ such that $ w \not \preceq \langle u \rangle $ (and $ \langle u,w \rangle = id $ by maximality). By Lemma \ref{pfr}, $ \exists u' \preceq c * \overline{u} $ with $ Z(u') = Z(u) $ and $ u' \not \perp u $, but $ u \le K $ so $ \forall j \neq i $, $ u' \not \le b_i \wedge b_j $, now $ u' \le Z(u) \le b_i $, so $ \langle u' \rangle = b_i $. Using distributivity (as for Theorem \ref{centheo}) we conclude by $$ \langle c \rangle = \langle u,c \rangle \wedge \langle c,v \rangle \ge \langle u,w \rangle \wedge \langle u',v \rangle = id \wedge id = id. $$ 
Else $ K = 0 $, but $ \forall i $, $ (b_i \wedge b_j)^{\perp} \ge b_j^{\perp} $, so $ \forall i $, $ \bigwedge_{j \neq i} b_j^{\perp} = 0 $. Let $ p_1, \dots , p_r $ be the minimal central projections. Then $ b_i = \bigoplus_{s=1}^{r} p_{i,s} $ with $ p_{i,s} \le p_s $ and $ p_{i,1} = p_1 = e_1 $. Now $ b_i^{\perp} = \bigoplus_{s=1}^{r} (p_s-p_{i,s}) $, so by assumption, $$ 0 = \bigwedge_{j \neq i} \bigoplus_{s=1}^{r} (p_s-p_{j,s}) = \bigoplus_{s=1}^{r} \bigwedge_{j \neq i} (p_s-p_{j,s}), \ \forall i. $$ It follows that for all $i$ and $s$, $ p_s = \bigvee_{j \neq i} p_{j,s} $, so $ tr(p_s) \le \sum_{j \neq i} tr(p_{j,s}) $. Now if $ \exists s$ such that $ \forall i \ p_{i,s} < p_s $, then $ \langle p_s \rangle = id $, which is ok; else $ \forall s, \exists i $ with $ \ p_{i,s} = p_s $, but $ \sum_{j \neq i} tr(p_{j,s}) \ge tr(p_s) $, so $ \sum_{j} tr(p_{j,s}) \ge 2tr(p_s) $. Then $$ \sum_i tr(b_i) \ge n \cdot tr(e_1) + 2 \sum_{s \neq 1} tr(p_s) = 2tr(id) + (n-2)tr(e_1). $$ 
Now $ |id:b_i| = tr(id)/tr(b_i) $, so $$ \sum_i \frac{1}{|id:b_i|} \ge 2 + \frac{n-2}{|id:e_1|} $$ 
which contradicts the assumption, because we can assume $ n>2 $ by Corollary \ref{two}. The result follows.
\end{proof}
\begin{remark} The converse is false because there exists w-cyclic distributive subfactor planar algebras with $ \sum_i \frac{1}{|id:b_i|}>2 $. For example, the subfactor $ (R \rtimes S^n_2 \subset R \rtimes S^n_3) $ is w-cyclic and $ \mathcal{B}_n $, but $ \sum_i \frac{1}{|id:b_i|} = n/3 $.
\end{remark}
\begin{corollary} \label{n/2} \label{B4w}
Every $ \mathcal{B}_n $ subfactor planar algebra with $ |id:b| \ge n/2 $, for any coatom $ b \in [e_1,id] $, is w-cyclic. Then $\forall n \le 4 $, any $ \mathcal{B}_n $ subfactor planar algebra is w-cyclic.
\end{corollary}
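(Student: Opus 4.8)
The plan is to derive both assertions from Theorem \ref{le2}, the only substantive external input being Jones' rigidity of the index. For the first assertion I would start from the combinatorial fact that a boolean lattice $ \mathcal{B}_n $ has exactly $ n $ coatoms $ b_1, \dots, b_n $, namely the complements of its $ n $ atoms. The hypothesis $ |id:b_i| \ge n/2 $ rewrites as $ \frac{1}{|id:b_i|} \le \frac{2}{n} $ for each $ i $, so summing over the $ n $ coatoms gives
$$ \sum_{i=1}^n \frac{1}{|id:b_i|} \le n \cdot \frac{2}{n} = 2. $$
A boolean lattice is distributive, so Theorem \ref{le2} applies verbatim and yields that $ \mathcal{P} $ is w-cyclic. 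Note that the parameter $ n $ of Theorem \ref{le2} (the number of coatoms) here coincides with the rank of $ \mathcal{B}_n $, which is what makes the bookkeeping clean.

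For the second assertion the key step is to show that \emph{every} coatom of $ [e_1,id] $ automatically satisfies $ |id:b| \ge 2 $, after which $ 2 \ge n/2 $ for all $ n \le 4 $ lets me invoke the first part. Indeed, writing $ b = e^M_K $ for the intermediate subfactor $ K $ attached to the coatom, we have $ |id:b| = |M:K| $, and since $ b $ is a coatom, $ K \subsetneq M $, so this index is strictly greater than $ 1 $. By Jones' theorem \cite{jo2} the admissible index values lie in $ \{4\cos^2(\pi/m) : m \ge 3\} \sqcup [4,\infty] $, and the smallest value exceeding $ 1 = 4\cos^2(\pi/3) $ is $ 4\cos^2(\pi/4) = 2 $. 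Hence $ |id:b| \ge 2 $ for every coatom, so for $ n \le 4 $ one has $ |id:b| \ge 2 \ge n/2 $ and the first part applies; for the small cases $ n \le 2 $ one may alternatively appeal directly to Corollary \ref{two}.

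The one genuinely non-trivial ingredient — and the step I expect to carry the weight of the argument — is the rigidity of the Jones index spectrum, i.e. the gap between $ 1 $ and $ 2 $ that forces any proper intermediate subfactor to have index at least $ 2 $. Everything else is counting the coatoms of $ \mathcal{B}_n $ and feeding the resulting estimate into Theorem \ref{le2}. The threshold $ n = 4 $ is sharp for this method precisely because for $ n \ge 5 $ the required bound becomes $ |id:b| \ge n/2 > 2 $, which a coatom of index exactly $ 2 $ (realized, for instance, by a $ \mathbb{Z}/2 $-type intermediate) can violate, so no uniform lower bound from the index alone suffices beyond $ n = 4 $.
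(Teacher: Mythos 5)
Your proof is correct and follows essentially the same route as the paper: sum the hypothesis $\frac{1}{|id:b_i|} \le \frac{2}{n}$ over the $n$ coatoms of $\mathcal{B}_n$ to get the bound $2$ required by Theorem \ref{le2}, then observe that every coatom satisfies $|id:b| \ge 2$ so that $n \le 4$ suffices. The only difference is that you spell out the justification of $|id:b| \ge 2$ via Jones' index rigidity (no admissible index values in $(1,2)$), which the paper uses but leaves implicit.
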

\begin{proof}
By assumption (following the notations of Theorem \ref{le2}) $$ \sum_i \frac{1}{|id:b_i|} \le \sum_i \frac{2}{n} = 2. $$ But $ |id:b| \ge 2 $, so any $ n \le 4 $ works.
\end{proof}
\begin{corollary} \label{32}
A distributive subfactor planar algebra having less than $32$ biprojections (or of index $ <32 $), is w-cyclic. 
\end{corollary}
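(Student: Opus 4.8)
The plan is to reduce the general distributive case to the boolean case already settled in Corollary \ref{B4w}, by passing to the top interval of the lattice. First I would invoke Lemma \ref{topBn} to write the top interval $ [t,id] $ of the biprojection lattice as a boolean lattice $ \mathcal{B}_n $, where $ t = \bigwedge_i b_i $ is the meet of the coatoms $ b_1, \dots, b_n $. By Definition \ref{topdef}, this top interval is the biprojection lattice of the top intermediate planar algebra $ \mathcal{P}(t,id) $, which is an irreducible (and boolean) subfactor planar algebra. By Lemma \ref{Topw} it suffices to show that this top intermediate is w-cyclic, so the whole problem collapses to a single $ \mathcal{B}_n $ planar algebra.

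The second step is to bound the rank $ n $. The boolean interval $ \mathcal{B}_n $ has exactly $ 2^n $ elements, and since it is a sublattice of the full biprojection lattice, the total number of biprojections is at least $ 2^n $; under the hypothesis of fewer than $ 32 $ biprojections this forces $ 2^n < 32 $, i.e. $ n \le 4 $. For the index version I would instead use that each covering relation $ c < c' $ (with nothing strictly between) in the biprojection lattice corresponds, via Theorem \ref{2iso}, to a proper irreducible inclusion $ \mathcal{P}(c,c') $, whose index $ |c':c| $ therefore lies in the Jones spectrum and is strictly greater than $ 1 $; since the smallest such value is $ 4\cos^2(\pi/4) = 2 $, we get $ |c':c| \ge 2 $. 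Chaining these inequalities along a maximal chain of length $ n $ in $ [t,id] \cong \mathcal{B}_n $ gives $ |id:t| \ge 2^n $, and because $ |id:e_1| = |id:t| \cdot |t:e_1| \ge 2^n $, the hypothesis of index $ < 32 $ again yields $ n \le 4 $.

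Finally, with $ n \le 4 $ in hand, Corollary \ref{B4w} applies directly: any $ \mathcal{B}_n $ subfactor planar algebra with $ n \le 4 $ is w-cyclic. Hence the top intermediate $ \mathcal{P}(t,id) $ is w-cyclic, and Lemma \ref{Topw} propagates this to $ \mathcal{P} $ itself.

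The only mildly delicate point is the reconciliation of the two phrasings of the hypothesis, ``fewer than $ 32 $ biprojections'' and ``index $ < 32 $'', both of which must be converted into the single bound $ 2^n < 32 $ on the rank of the top boolean interval. The counting version is immediate from $ \mathcal{B}_n $ being a sublattice, while the index version rests entirely on the elementary but essential fact that every nontrivial irreducible intermediate inclusion has index at least $ 2 $. Everything beyond this bookkeeping is a direct appeal to Lemma \ref{Topw} and Corollary \ref{B4w}, so I expect no further obstacle.
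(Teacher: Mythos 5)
Your proposal is correct and takes essentially the same route as the paper: reduce to the boolean top interval via Lemma \ref{topBn} and Lemma \ref{Topw}, bound its rank by $n \le 4$ from $2^5 = 32$, and conclude with Corollary \ref{B4w}. The paper compresses the rank bound into the single clause ``because $32 = 2^5$'', and your two explicit justifications --- the sublattice count $2^n < 32$ for the biprojection-count hypothesis, and index multiplicativity together with the minimal nontrivial index $4\cos^2(\pi/4)=2$ for the index hypothesis --- are exactly the intended bookkeeping behind that clause.
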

\begin{proof}
In this case, the top of $[e_1,id]$ is boolean of rank $ n < 5 $, because $ 32 = 2^5 $; the result follows by Lemma \ref{Topw} and Corollary \ref{B4w}.
\end{proof}
\begin{conjecture} \label{conjext}
A distributive subfactor planar algebra is w-cyclic.
\end{conjecture}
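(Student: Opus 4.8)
The plan is to reduce to the boolean case and then run the same inductive scheme as in Theorem~\ref{centheo}, replacing the step that uses centrality by a genuinely planar-algebraic argument. First, by Lemmas~\ref{topBn} and~\ref{Topw} it suffices to treat a boolean subfactor planar algebra, say $\mathcal{B}_n$ with coatoms $b_1,\dots,b_n \in [e_1,id]$; the base cases $n\le 4$ are already covered by Corollary~\ref{B4w}. So I would induct on the rank $n$, and the whole problem becomes: produce a minimal projection $u$ with $u \not\le b_i$ for every $i$, equivalently $\langle u\rangle = id$.

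Second, I would set up the natural dichotomy in terms of the minimal central projections $p_1=e_1,p_2,\dots,p_r$ of $\mathcal{P}_{2,+}$. Writing $b_i=\bigoplus_s p_{i,s}$ with $p_{i,s}\le p_s$, if there is one block $s$ with $p_{i,s}<p_s$ for all $i$, then inside $p_s\mathcal{P}_{2,+}p_s\cong M_{d_s}(\mathbb{C})$ a rank-one projection avoiding the finitely many proper subspaces $\mathrm{range}(p_{i,s})$ exists (the union-of-subspaces trick already used in Theorem~\ref{mini}), and we are done. The only obstruction is the saturated case where every block is covered, i.e.\ for each $s$ there is an $i$ with $p_{i,s}=p_s$; this is exactly the second case of Theorem~\ref{le2}.

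Third, in the saturated case I would try to build $u$ from below, as in Theorems~\ref{centheo} and~\ref{le2}: choose a coatom $b$ with complement the atom $b^{\complement}$, obtain by induction (through Proposition~\ref{left}) minimal projections with $\langle u\rangle=b$ and $\langle v\rangle=b^{\complement}$, pass to a minimal $c\preceq u*v$, and use distributivity to split $\langle c\rangle=\langle c,u\rangle\wedge\langle c,v\rangle$. Lemma~\ref{pfr} then supplies $u',v'$ with $Z(u')=Z(u)$, $Z(v')=Z(v)$ and $\langle c\rangle=\langle u',c,v\rangle\wedge\langle u,c,v'\rangle$. In Theorem~\ref{centheo} one finishes because centrality gives $u\le Z(u')\le\langle u',c,v\rangle$; here that very inclusion is what fails.

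Finally, the main obstacle is precisely bridging this gap without centrality. The trace count of Theorem~\ref{le2} only closes the saturated case under $\sum_i 1/|id:b_i|\le 2$, and nothing in the $\mathcal{B}_n$ hypothesis bounds the individual indices $|id:b_i|$, so that counting cannot be pushed to the general conjecture. I expect a full proof to need extra input beyond pure lattice combinatorics: either an iteration of Lemma~\ref{wmin} to climb from $u'$ to a projection escaping every coatom, or a Frobenius-reciprocity/fusion constraint (via the exchange relations of Lemma~\ref{prodcoprod}) forcing the fully saturated configurations either not to occur or to admit a generator anyway. Ruling out, or resolving, these saturated configurations is the crux, and is where I would concentrate the effort.
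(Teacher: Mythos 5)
The statement you were given is Conjecture \ref{conjext}: the paper does not prove it, and neither do you --- correctly so, since as far as anyone (including the author) knows it is open. What can be compared is your reduction against the paper's partial results, and there the match is essentially exact: the reduction to the boolean case via Lemmas \ref{topBn} and \ref{Topw}, the base cases $n \le 4$ from Corollary \ref{B4w}, the block decomposition $b_i = \bigoplus_s p_{i,s}$ over the minimal central projections with the union-of-subspaces argument of Theorem \ref{mini} when some block is uncovered, and the observation that centrality enters the proof of Theorem \ref{centheo} exactly at the step $u \le Z(u') \le \langle u',c,v\rangle$ --- all of this is correct and is precisely what the paper does. Your proposal makes no unjustified claim, and your identification of the obstruction is honest.

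One refinement you missed, which would sharpen where you ``concentrate the effort'': the genuinely open region is smaller than ``saturated''. In the paper's proof of Theorem \ref{le2}, the case $K := \bigwedge_{i \neq j}(b_i \wedge b_j)^{\perp} \neq 0$ is disposed of \emph{unconditionally} (no bound on $\sum_i 1/|id:b_i|$), by combining Lemma \ref{wmin}, Lemma \ref{pfr} and distributivity; this is already the ``iteration of Lemma \ref{wmin}'' that you float speculatively in your last paragraph, and it does close part of your saturated case (note that saturation and $K \neq 0$ can coexist, e.g.\ $n=2$ forces $K = id - e_1 \neq 0$). The trace count is needed only when $K = 0$. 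Hence the true frontier of the conjecture is: boolean lattice, $K = 0$, every central block fully covered by some coatom, and $\sum_i 1/|id:b_i| > 2$. So: no error in your proposal, but also no proof --- exactly the status of the paper --- and you should aim your effort at this smaller configuration rather than at the whole saturated case.
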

\noindent By Lemmas \ref{topBn} and \ref{Topw}, we can reduce Conjecture \ref{conjext} to the boolean case, and then extend it to the top boolean case. 
\begin{remark} The converse of Conjecture \ref{conjext} is false, because the group $ S_3 $ is linearly primitive but not cyclic (see Corollary \ref{wgrp2}).
\end{remark}
\begin{problem} What is the natural additional assumption (A) such that $ \mathcal{P} $ is distributive if and only if it is w-cyclic and satisfies (A)?
\end{problem}
\noindent Assuming Conjecture \ref{conjext} and using Remark \ref{distable}, we get:
\begin{conjecture} \label{statw*} For any distributive subfactor planar algebra $ \mathcal{P} $ and any biprojection $ b \in \mathcal{P}_{2,+} $, the planar algebras $\mathcal{P}(e_1,b)$, $\mathcal{P}(b,id)$ and their duals are w-cyclic.
\end{conjecture}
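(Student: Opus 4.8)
The plan is to reduce the statement entirely to Conjecture \ref{conjext} by checking that each of the four planar algebras listed inherits distributivity from $\mathcal{P}$, since distributivity is exactly the hypothesis under which that conjecture delivers w-cyclicity.

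First I would treat the two intermediate planar algebras. By Theorem \ref{2iso}, the isomorphism $l_b$ identifies $\mathcal{P}_{2,+}(e_1,b)$ with the corner $b\,\mathcal{P}_{2,+}\,b$ and carries the Jones projection $e^K_P$ of each intermediate to $e^M_P$; consequently it induces a lattice isomorphism between the biprojection lattice of $\mathcal{P}(e_1,b)$ and the interval $[e_1,b]$ of $[e_1,id]$. Symmetrically, $r_b$ identifies the biprojection lattice of $\mathcal{P}(b,id)$ with the interval $[b,id]$. Since an interval is a sublattice and distributivity is stable under passing to a sublattice (Remark \ref{distable}), both $\mathcal{P}(e_1,b)$ and $\mathcal{P}(b,id)$ are distributive. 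For the duals, I would use that passing to the dual subfactor planar algebra reverses the biprojection lattice, so the biprojection lattices of the two duals are the reversals of $[e_1,b]$ and $[b,id]$; since distributivity is preserved under reversal (again Remark \ref{distable}), these duals are distributive as well.

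Having established that all four planar algebras are distributive, I would invoke Conjecture \ref{conjext} for each of them to conclude that each is w-cyclic. The only point demanding care is the exact identification of the biprojection lattices of the intermediate and dual planar algebras with the corresponding intervals and reversals of $[e_1,id]$; this is precisely the content of Theorem \ref{2iso} for the intermediates, together with the standard reversal of biprojections under duality (the same machinery already used in Lemma \ref{c*}), so beyond this bookkeeping I do not expect a genuine obstacle, the whole statement being a formal corollary of Conjecture \ref{conjext} and Remark \ref{distable}.
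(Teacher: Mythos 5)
Your proposal is correct and follows exactly the paper's route: the paper derives Conjecture \ref{statw*} by the one-line observation ``Assuming Conjecture \ref{conjext} and using Remark \ref{distable}, we get\dots'', i.e., the biprojection lattices of $\mathcal{P}(e_1,b)$, $\mathcal{P}(b,id)$ and their duals are intervals (resp.\ reversals of intervals) of the distributive lattice $[e_1,id]$, hence distributive, hence w-cyclic granting Conjecture \ref{conjext}. Your write-up just makes explicit the lattice identifications via Theorem \ref{2iso} and duality that the paper leaves implicit, so it is the same argument.
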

\begin{remark} The converse is false because the interval $[S_2,S_4]$, proposed by Zhengwei Liu, gives a counter-example.
\end{remark}
\begin{remark} A cyclic subfactor planar algebra satisfies Conjecture \ref{statw*} (thanks to Theorem \ref{thm} and Lemma \ref{c*}).
\end{remark}
\begin{problem} Is a Dedekind subfactor planar algebra $ \mathcal{P} $ distributive if and only if for any biprojection $ b \in \mathcal{P}_{2,+} $, the planar algebras $\mathcal{P}(e_1,b)$, $\mathcal{P}(b,id)$ and their duals are w-cyclic?
\end{problem}
 
\section{Applications}
\subsection{A non-trivial upper bound}
For any irreducible subfactor planar algebra $ \mathcal{P} $, we exhibit a non-trivial upper bound for the minimal number of minimal $ 2 $-box projections generating the identity biprojection. We will use the notations of Section \ref{interpa}.
\begin{lemma} \label{interdistrib}
Let $ b' < b $ be biprojections. If $ \mathcal{P}(b' , b) $ is w-cyclic, then there is a minimal projection $ u \in \mathcal{P}_{2,+} $ such that $ \langle b', u \rangle = b $. 
\end{lemma}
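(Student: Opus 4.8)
The plan is to reduce the general interval $[b',b]$ to the top-interval situation already settled by Proposition \ref{right}, by passing to the intermediate planar algebra sitting below $b$. Write $b=e^M_Q$ and $b'=e^M_P$ for intermediate subfactors $N\subseteq P\subseteq Q\subseteq M$, so that $\mathcal{P}(b',b)=\mathcal{P}(P\subseteq Q)$ in the notation of Section \ref{interpa}, and note that $\mathcal{P}(N\subseteq Q)$ is again an irreducible subfactor planar algebra.

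First I would invoke the left isomorphism $l_b=l_Q$ of Theorem \ref{2iso}, which identifies $\mathcal{P}_{2,+}(N\subseteq Q)$ with the corner $b\,\mathcal{P}_{2,+}\,b$ and sends the identity biprojection $e^Q_Q$ of $\mathcal{P}(N\subseteq Q)$ to $b$ and the biprojection $e^Q_P$ to $b'$. Under this identification the interval $[e^Q_P,e^Q_Q]$ is exactly $\mathcal{P}(P\subseteq Q)=\mathcal{P}(b',b)$, so the hypothesis says precisely that the top intermediate $\mathcal{P}(e^Q_P,e^Q_Q)$ of $\mathcal{P}(N\subseteq Q)$ is w-cyclic.

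Next I would apply Proposition \ref{right} inside $\mathcal{P}(N\subseteq Q)$ with biprojection $e^Q_P$: w-cyclicity of $\mathcal{P}(e^Q_P,e^Q_Q)$ yields a minimal projection $w\in\mathcal{P}_{2,+}(N\subseteq Q)$ with $\langle e^Q_P,w\rangle=e^Q_Q$ (the extra condition on $r^{-1}(\cdot)$ supplied by that proposition is not needed here). Finally I would transport back by setting $u:=l_b(w)$. By the last clause of Theorem \ref{2iso}, $l_b$ commutes with $\langle\cdot\rangle$, so
$$ \langle b',u\rangle=\langle l_b(e^Q_P),\,l_b(w)\rangle=l_b\bigl(\langle e^Q_P,w\rangle\bigr)=l_b(e^Q_Q)=b. $$
Since $l_b$ is a von Neumann algebra isomorphism onto the corner $b\,\mathcal{P}_{2,+}\,b$, the projection $u$ is minimal there; and a projection minimal in the hereditary subalgebra $b\,\mathcal{P}_{2,+}\,b$ is minimal in $\mathcal{P}_{2,+}$, because any projection $q\le u$ in $\mathcal{P}_{2,+}$ satisfies $q\le u\le b$ and hence already lies in the corner. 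This produces the required minimal $u\in\mathcal{P}_{2,+}$.

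The step needing care is purely bookkeeping: correctly tracking the three planar algebras $\mathcal{P}(N\subseteq M)$, $\mathcal{P}(N\subseteq Q)$ and $\mathcal{P}(P\subseteq Q)$ together with the two intertwining properties of $l_b$ (preservation of the Jones projections of intermediates, and preservation of the biprojection generated by a family). There is no genuine analytic obstacle here, since Proposition \ref{right} already absorbs the nontrivial content about the coproduct and the map $r_b$; the work is simply to certify that the reduction via $l_b$ legitimately converts the interval hypothesis on $[b',b]$ into a top-interval hypothesis to which Proposition \ref{right} applies.
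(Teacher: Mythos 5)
Your proof is correct and is essentially the paper's own argument: the paper likewise applies Proposition \ref{right} inside the intermediate planar algebra $\mathcal{P}(e_1,b)$ with the biprojection $a=l_b^{-1}(b')$ (your $e^Q_P$), then transports the resulting minimal projection through $l_b$ using the fact that $l_b$ preserves generated biprojections, and concludes minimality in $\mathcal{P}_{2,+}$ from minimality in the corner $b\mathcal{P}_{2,+}b$. Your explicit justification of that last hereditary-corner step is a detail the paper leaves implicit, but the route is the same.
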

\begin{proof} Consider the isomorphisms of von Neumann algebras $$ l_{b}: \mathcal{P}_{2,+}(e_1 , b) \to b\mathcal{P}_{2,+}b $$ and, with $ a = l_{b}^{-1}(b') $, $$ r_{a}: \mathcal{P}_{2,+}(b' , b) \to a * \mathcal{P}_{2,+}(e_1 , b) * a. $$ Then, by assumption, the planar algebra $ \mathcal{P}(b' , b) $ is w-cyclic, so by Proposition \ref{right}, $ \exists u' \in \mathcal{P}_{2,+}(e_1,b) $ minimal projection such that $$ \langle a, u' \rangle = l_b^{-1}(b). $$ Then by applying the map $ l_{b} $ and Theorem \ref{2iso}, we get $$ b=\langle l_{b}(a), l_{b}(u') \rangle = \langle b', u \rangle $$ with $ u=l_{b}(u') $ a minimal projection in $ b\mathcal{P}_{2,+}b $, so in $ \mathcal{P}_{2,+} $.
\end{proof}
Assuming Conjecture \ref{conjext} and using Lemma \ref{interdistrib}, we get a non-trivial upper bound:
 
\begin{conjecture} \label{statement}
The minimal number $ r $ of minimal projections generating the identity biprojection (i.e., $ \langle u_1, \dots, u_r \rangle $ = id) is less than the minimal length $ \ell $ for an ordered chain of biprojections $$ e_1=b_0 < b_1 < \dots < b_{\ell} = id $$ such that $ [b_i, b_{i+1}] $ is distributive (or better, top boolean).
\end{conjecture}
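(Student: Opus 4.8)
The plan is to fix a chain of biprojections realizing the minimal length $\ell$, say $e_1=b_0 < b_1 < \dots < b_\ell = id$ with each interval $[b_i,b_{i+1}]$ distributive, and then to manufacture, one rung at a time, minimal projections $u_1,\dots,u_\ell$ whose joint generation is the whole identity biprojection. The key preliminary observation is that the biprojection lattice of the intermediate planar algebra $\mathcal{P}(b_i,b_{i+1})=\mathcal{P}(P\subseteq Q)$ is exactly the interval $[b_i,b_{i+1}]$: by Theorem \ref{bisch} its biprojections are the Jones projections of intermediate subfactors $P\subseteq K\subseteq Q$, which correspond under Theorem \ref{2iso} to the biprojections $b_i\le e^M_K\le b_{i+1}$. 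Consequently $\mathcal{P}(b_i,b_{i+1})$ is a distributive subfactor planar algebra whenever $[b_i,b_{i+1}]$ is.

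First I would feed each intermediate piece into the hypothesis. Since $\mathcal{P}(b_i,b_{i+1})$ is distributive, Conjecture \ref{conjext} makes it w-cyclic, and then Lemma \ref{interdistrib} (applied with $b'=b_i$, $b=b_{i+1}$) furnishes, for each $i=0,\dots,\ell-1$, a minimal projection $u_{i+1}\in\mathcal{P}_{2,+}$ such that $\langle b_i, u_{i+1}\rangle = b_{i+1}$.

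Next I would assemble these generators by induction along the chain. By Definition \ref{gener}, $\langle S\rangle$ is the smallest biprojection above every element of $S$, i.e. the join $\bigvee_{s\in S}\langle s\rangle$ in the biprojection lattice $[e_1,id]$; in particular $\langle b_i,u_{i+1}\rangle = b_i\vee\langle u_{i+1}\rangle$. Starting from $b_0=e_1$, which lies below every biprojection (Theorem \ref{biproj}), one gets $b_1=\langle e_1,u_1\rangle=\langle u_1\rangle$; and if $b_i=\langle u_1,\dots,u_i\rangle=\bigvee_{j\le i}\langle u_j\rangle$, then associativity of the join gives $b_{i+1}=\langle b_i,u_{i+1}\rangle=\langle u_1,\dots,u_{i+1}\rangle$. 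Propagating to the top rung yields $id=b_\ell=\langle u_1,\dots,u_\ell\rangle$, so the identity biprojection is generated by $\ell$ minimal projections and hence $r\le\ell$, which is the asserted non-trivial upper bound.

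Finally, the ``or better, top boolean'' refinement costs nothing extra: by Lemmas \ref{topBn} and \ref{Topw} the boolean reduction of Conjecture \ref{conjext} already delivers w-cyclicity for any top boolean intermediate, so the identical argument runs verbatim if each $[b_i,b_{i+1}]$ is only assumed top boolean, allowing a possibly shorter admissible chain and thus a sharper bound. I expect the main obstacle to be not this deduction, which is essentially bookkeeping on the lattice join, but rather the input Conjecture \ref{conjext} itself: it is established in the excerpt only for distributive lattices with fewer than $32$ elements (Corollary \ref{32}), so the present estimate is genuinely conditional on that conjecture and becomes unconditional precisely on the ranges where it is known, e.g. when every interval $[b_i,b_{i+1}]$ in the chain has top boolean part of rank at most $4$.
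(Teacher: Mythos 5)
Your proposal is correct and takes essentially the same route as the paper: the paper obtains this statement precisely by assuming Conjecture \ref{conjext}, applying Lemma \ref{interdistrib} to each rung $[b_i,b_{i+1}]$ of the chain to get minimal projections $u_{i+1}$ with $\langle b_i,u_{i+1}\rangle=b_{i+1}$, and chaining the generators via the lattice join, exactly as you do. Your additional bookkeeping (identifying the biprojection lattice of $\mathcal{P}(b_i,b_{i+1})$ with the interval $[b_i,b_{i+1}]$ via Theorems \ref{bisch} and \ref{2iso}, and noting the result is conditional on Conjecture \ref{conjext}, hence unconditional only in the ranges where that conjecture is proved) is consistent with why the paper states this as a conjecture rather than a theorem.
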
 
\begin{remark} We can deduce theorems from Conjecture \ref{statement}, by adding some assumptions to $ [b_i, b_{i+1}] $, according to Theorems \ref{centheo} or \ref{le2}. \end{remark}
\begin{remark} Let $ (N \subset M) $ be any irreducible finite index subfactor. We can deduce a non-trivial upper bound for the minimal number of (algebraic) irreducible sub-$ N $-$ N $-bimodules of $ M $, generating $ M $ as von Neumann algebra.
\end{remark} 
\subsection{Back to the finite groups theory}
As applications, we get a dual version of Theorem \ref{ore2}, and for any finite group $ G $, we get a non-trivial upper bound for the minimal number of irreducible components for a faithful complex representation. The action of $G$ on the hyperfinite $ {\rm II}_1 $ factor $ R $ is always assumed outer.

\begin{theorem}[\S 226 \cite{bu}] \label{faith}
A complex representation $V$ of a finite group $G$ is faithful if and only if any irreducible complex representation $W$ is equivalent to a subrepresentation of  $V^{\otimes n} $, for some $n \ge 0$.
\end{theorem}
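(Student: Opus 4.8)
The plan is to prove the two implications separately, treating the forward direction (faithfulness $\Rightarrow$ every irreducible occurs) as the substantive one; it rests on a Vandermonde argument applied to the character values.

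For the easy direction ($\Leftarrow$), I would argue directly. Suppose every irreducible complex representation is equivalent to a subrepresentation of some $V^{\otimes n}$, and let $g \in \ker V$. Then $V(g) = \mathrm{id}$ forces $V^{\otimes n}(g) = \mathrm{id}$ for every $n \ge 0$, so $g$ acts trivially on every subrepresentation of every tensor power, hence on every irreducible, hence (by complete reducibility) on every representation. In particular $g$ acts trivially on the regular representation, which is faithful, so $g = e$. Thus $V$ is faithful.

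For the main direction ($\Rightarrow$), let $\chi$ and $\psi$ be the characters of $V$ and of a fixed irreducible $W$, and set $d = \dim V = \chi(e)$. The multiplicity of $W$ in $V^{\otimes n}$ is
$$ m_n = \langle \chi^n, \psi \rangle = \frac{1}{|G|} \sum_{g \in G} \chi(g)^n \, \overline{\psi(g)}. $$
The key observation is that faithfulness forces $\chi(g) = d$ if and only if $g = e$: each $\chi(g)$ is a sum of $d$ roots of unity, so $|\chi(g)| \le d$, and the value $\chi(g) = d$ is attained only when all eigenvalues of $V(g)$ equal $1$, i.e. $V(g) = \mathrm{id}$, i.e. $g = e$ by faithfulness. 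I would then group $G$ according to the pairwise distinct values $v_0 = d, v_1, \dots, v_r$ taken by $\chi$, and set $a_j = \sum_{\chi(g) = v_j} \overline{\psi(g)}$, so that $|G|\, m_n = \sum_{j=0}^{r} v_j^{\,n} a_j$ for every $n$.

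Now suppose, for contradiction, that $W$ never occurs, i.e. $m_n = 0$ for all $n \ge 0$. Reading this for $n = 0, 1, \dots, r$ yields a homogeneous linear system in the $a_j$ whose coefficient matrix $(v_j^{\,n})_{0 \le n,\, j \le r}$ is Vandermonde; since the $v_j$ are pairwise distinct it is invertible, forcing every $a_j = 0$. But faithfulness makes the fibre over $v_0 = d$ equal to $\{e\}$, whence $a_0 = \overline{\psi(e)} = \dim W > 0$, a contradiction. Therefore $m_n > 0$ for some $n \le r$, and $W$ embeds in $V^{\otimes n}$. The main obstacle --- and the only place where faithfulness is genuinely used --- is the identification of the fibre $\chi^{-1}(d)$ with $\{e\}$, which is exactly what makes $a_0 \neq 0$ and so breaks the Vandermonde system; everything else is formal once the multiplicities are expressed as a power sum in the character values.
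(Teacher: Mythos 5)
Your proof is correct: both directions are sound, the multiplicity formula $m_n = \langle \chi^n, \psi\rangle$ is the right tool, and the two key points (the Vandermonde system forcing all $a_j = 0$, and faithfulness identifying $\chi^{-1}(d) = \{e\}$ so that $a_0 = \dim W > 0$) are exactly what make the argument work. The paper itself gives no proof --- it cites Burnside \S 226 --- and your argument is essentially the classical one found there (the Burnside--Brauer power-sum/Vandermonde argument), so there is nothing to reconcile.
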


\begin{definition} \label{linprimdef}
A group $ G $ is linearly primitive if it admits a faithful irreducible complex representation.
\end{definition}
 \begin{definition} \label{fixstab} Let $ W $ be a representation of a group $ G $, $ K $ a subgroup of $ G $, and $ X $ a subspace of $ W $. Let the \textit{fixed-point subspace} be $$ W^{K}:=\{w \in W \ \vert \ kw=w \ , \forall k \in K \} $$ and the \textit{pointwise stabilizer subgroup} $$ G_{(X)}:=\{ g \in G \ \vert \ gx=x \ , \forall x \in X \} $$ \end{definition} 
\begin{definition} \label{linprim}
An interval $ [H,G] $ is said to be linearly primitive if there is an irreducible complex representation $ V $ of $ G $ with $ G_{(V^H)} = H $.
\end{definition}
\noindent The group $ G $ is linearly primitive iff the interval $ [\{e\},G] $ is so. 
\begin{lemma} \label{linprimgrp} Let $H$ be a core-free subgroup of G. Then $ G $ is linearly primitive if $ [H,G] $ is so. 
\end{lemma}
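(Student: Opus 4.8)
The plan is to observe that the representation witnessing linear primitivity of the interval $[H,G]$ is itself faithful, so that $G$ is linearly primitive with the very same representation, no further construction being needed. By Definition \ref{linprim}, the hypothesis supplies an irreducible complex representation $V$ of $G$ with $G_{(V^H)} = H$.

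First I would identify the kernel of $V$ with a pointwise stabilizer. Writing $\rho \colon G \to GL(V)$ for the representation, its kernel is exactly $G_{(V)}$, the pointwise stabilizer of the whole space $V$ in the sense of Definition \ref{fixstab}. Since $V^H \subseteq V$, any element fixing all of $V$ fixes all of $V^H$, whence $G_{(V)} \subseteq G_{(V^H)} = H$.

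Next I would invoke normality. The subgroup $\ker(\rho) = G_{(V)}$ is normal in $G$, being the kernel of a group homomorphism, and it is contained in $H$; therefore it is contained in the core $H_G$, the largest normal subgroup of $G$ contained in $H$ (Definition \ref{equiv}). The assumption that $H$ is core-free, i.e. $H_G = \{1\}$, then forces $\ker(\rho) = \{1\}$, so $V$ is faithful. As $V$ is also irreducible, Definition \ref{linprimdef} gives that $G$ is linearly primitive, as desired.

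There is no genuine obstacle in this direction: the argument reduces to the inclusion $G_{(V)} \subseteq G_{(V^H)}$ together with the normality of a kernel and the meaning of core-freeness. The only point to keep in mind is that linear primitivity of the interval is already phrased in terms of an irreducible $V$, so this same $V$ plays the role of the faithful irreducible representation required for $G$; in particular the Burnside characterization recalled in Theorem \ref{faith} is not needed here, although it would provide an alternative route by showing every irreducible $W$ embeds in some $V^{\otimes n}$.
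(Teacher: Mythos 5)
Your proof is correct and follows essentially the same route as the paper's: take the irreducible $V$ with $G_{(V^H)} = H$, note $\ker(\pi_V) = G_{(V)} \subseteq G_{(V^H)} = H$, and use the normality of the kernel together with core-freeness of $H$ to conclude that $V$ is faithful, hence $G$ is linearly primitive. Your additional remarks (identifying the kernel with a pointwise stabilizer, noting Theorem \ref{faith} is not needed) are accurate but do not change the argument.
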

\begin{proof} Take $ V $ as above. Now, $ V^H \subset V $ so $ G_{(V)} \subset G_{(V^H)} $, but $ \ker(\pi_V) = G_{(V)} $, it follows that $ \ker(\pi_V) \subset H $; but $ H $ is a core-free subgroup of $ G $, and $ \ker(\pi_V) $ a normal subgroup of $ G $, so $ \ker(\pi_V)= \{ e \} $, which means that $ V $ is faithful on $ G $, i.e. $ G $ is linearly primitive. \end{proof}
\begin{lemma} \label{corrminstab}
Let $ p_x \in \mathcal{P}_{2,+}(R^G \subseteq R) $ be a minimal projection on the one-dimensional subspace $ \mathbb{C}x $ and $ H $ a subgroup of $ G $. Then $$ p_x \le b_H:=\vert H \vert^{-1}\sum_{h \in H} \pi_{V}(h) 
\Leftrightarrow H \subset G_x. $$ 
\end{lemma}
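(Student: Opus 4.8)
The plan is to exploit the standard dictionary for the fixed-point subfactor $(R^G \subseteq R)$: under the identification of $\mathcal{P}_{2,+}(R^G \subseteq R)$ with the group algebra $\mathbb{C}[G]$ acting on the ambient space of $V$, the group element $g$ is sent to the operator $\pi_V(g)$, and a minimal projection is a rank-one projection onto a line $\mathbb{C}x$ inside an irreducible block, so that $p_x$ is the orthogonal projection onto $\mathbb{C}x \subseteq V$. With this in hand, the whole equivalence reduces to a classical averaging-projection computation.

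First I would identify the range of $b_H$. Since $H$ is a subgroup, $b_H = |H|^{-1}\sum_{h \in H}\pi_V(h)$ is self-adjoint, because $\pi_V(h)^{\star} = \pi_V(h^{-1})$ and $H$ is closed under inverses, and it is idempotent, because for a fixed $h'' \in H$ there are exactly $|H|$ factorizations $h'' = h h'$ with $h, h' \in H$, giving $b_H^2 = b_H$. Hence $b_H$ is an orthogonal projection. A reindexing shows $\pi_V(h') b_H = b_H$ for every $h' \in H$, so $\text{range}(b_H) \subseteq V^H$, while conversely every $w \in V^H$ satisfies $b_H w = w$; therefore $\text{range}(b_H) = V^H$.

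Next I would translate the order relation. For projections in the von Neumann algebra $\mathcal{P}_{2,+}$, the relation $p_x \le b_H$ is equivalent to $b_H p_x = p_x$, that is, to $\text{range}(p_x) \subseteq \text{range}(b_H)$. Since $\text{range}(p_x) = \mathbb{C}x$ and $\text{range}(b_H) = V^H$ by the previous step, this reads $\mathbb{C}x \subseteq V^H$, equivalently $x \in V^H$. Finally $x \in V^H$ means $\pi_V(h)x = x$ for every $h \in H$, which is precisely the assertion that $H$ is contained in the stabilizer $G_x = \{ g \in G \ \vert \ \pi_V(g)x = x \}$. Chaining these equivalences yields $p_x \le b_H \Leftrightarrow H \subseteq G_x$.

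I do not expect a serious obstacle in the core argument, which is just the elementary fact that averaging over $H$ produces the orthogonal projection onto the $H$-fixed vectors. The only point demanding genuine care is the preliminary dictionary: one must pin down that, under the chosen identification of $\mathcal{P}_{2,+}(R^G \subseteq R)$ with $\mathbb{C}[G]$, the minimal projection $p_x$ really is the projection onto the line $\mathbb{C}x$ in the space on which $\pi_V$ acts, and that $b_H$ is the image of the subgroup idempotent with the stated trace normalization. Once that identification is fixed, the two range computations above force the equivalence.
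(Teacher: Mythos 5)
Your proof is correct and takes essentially the same route as the paper's: both directions reduce to the absorption identity $\pi_V(h)\,b_H = b_H$ for $h \in H$ and the averaging fact that $b_H$ fixes exactly the $H$-fixed vectors. The only difference is one of packaging — you first establish that $b_H$ is the orthogonal projection onto $V^H$ (a point the paper takes for granted) and then translate $p_x \le b_H$ into the range containment $\mathbb{C}x \subseteq V^H$, whereas the paper applies the same two identities directly to the vector $x$.
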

\begin{proof} If $ p_x \le b_H $ then $ b_H x = x $ and 
 $ \forall h \in H $ we have that $$ \pi_V(h) x = \pi_V(h) [b_H x] = [\pi_V(h) \cdot b_H] x = b_H x = x $$ 
which means that $ h \in G_x $, and so $ H \subset G_x $. 
Conversely, if $ H \subset G_x $ (i.e. $ \forall h \in H $, $ \pi_V(h) x = x $) then $ b_H x = x $, which means that $ p_x \le b_H $.
\end{proof}
\begin{theorem} \label{wgrp}
Let $[H,G]$ be an interval of finite groups. Then
\begin{itemize}
\item $ (R \rtimes H \subseteq R \rtimes G) $ is w-cyclic if and only if $ [H,G] $ is $ H $-cyclic. 
\item $ (R^G \subseteq R^H) $ is w-cyclic if and only if $ [H,G] $ is linearly primitive.
\end{itemize} 
\end{theorem}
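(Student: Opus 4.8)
The plan is to realize each of the two subfactors as an intermediate piece of a standard group subfactor whose $2$-box space is explicitly known, and then to translate the w-cyclicity criteria of Propositions~\ref{left} and~\ref{right} into pure group theory. The two bullets are handled by opposite ends of the biprojection lattice: the crossed-product side via Proposition~\ref{right}, the fixed-point side via Proposition~\ref{left}.

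For the first assertion I would view $(R\rtimes H\subseteq R\rtimes G)$ as the top intermediate $\mathcal{P}(b_H,id)$ of the crossed-product subfactor $(R\subseteq R\rtimes G)$, whose $2$-box space $\mathcal{P}_{2,+}$ is the commutative function algebra $\ell^\infty(G)$: its minimal projections are the point masses $\delta_g$ $(g\in G)$, its biprojections are the subgroup indicators $b_K$ $(K\le G)$ ordered by inclusion, the coproduct $*$ is (a normalization of) convolution, and $\langle b_H,\delta_g\rangle=b_{\langle H,g\rangle}$ by Definition~\ref{gener}. Proposition~\ref{right} then says $\mathcal{P}(b_H,id)$ is w-cyclic iff there is a minimal projection $v=\delta_g$ with $\langle b_H,\delta_g\rangle=id$ and with $r_{b_H}^{-1}(b_H*\delta_g*b_H)$ a positive multiple of a minimal projection. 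The first condition reads $b_{\langle H,g\rangle}=b_G$, i.e.\ $\langle H,g\rangle=G$, which is precisely $H$-cyclicity (Definition~\ref{Hcy}). The second holds automatically: $b_H*\delta_g*b_H$ is a scalar multiple of the double-coset indicator $\mathbf{1}_{HgH}$, which is a minimal idempotent of the commutative algebra of $H$-bi-invariant functions, so its $r_{b_H}^{-1}$-image is a positive multiple of a minimal projection. Thus w-cyclicity supplies a $g$ with $\langle H,g\rangle=G$, and conversely such a $g$ lets me take $v=\delta_g$.

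For the second assertion I would instead view $(R^G\subseteq R^H)$ as the bottom intermediate $\mathcal{P}(e_1,b_H)$ of the fixed-point subfactor $(R^G\subseteq R)$, whose $2$-box space is the group algebra $\mathbb{C}G\cong\bigoplus_\rho M_{d_\rho}$. Here a minimal projection is a rank-one projection $p_x$ onto a line $\mathbb{C}x$ inside an irreducible block $V$, and the biprojections are the $b_K=|K|^{-1}\sum_{k\in K}\pi(k)$. By Proposition~\ref{left}, $\mathcal{P}(e_1,b_H)$ is w-cyclic iff some $p_x$ satisfies $\langle p_x\rangle=b_H$. Lemma~\ref{corrminstab} gives $p_x\le b_K\iff K\subseteq G_x$; since $K\mapsto b_K$ is order-reversing, the smallest biprojection dominating $p_x$ is $b_{G_x}$, so $\langle p_x\rangle=b_H\iff G_x=H$. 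Hence w-cyclicity is equivalent to the existence of an irreducible $V$ and a vector $x\in V$ with $G_x=H$.

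It remains to identify ``$\exists\,x\in V$ with $G_x=H$'' with linear primitivity of $[H,G]$, i.e.\ $G_{(V^H)}=H$ (Definition~\ref{linprim}). If $G_x=H$ then $x\in V^H$, so $G_{(V^H)}\subseteq G_x=H$, while $H\subseteq G_{(V^H)}$ because every vector of $V^H$ is $H$-fixed; hence $G_{(V^H)}=H$. Conversely, given an irreducible $V$ with $G_{(V^H)}=H$, I would produce $x\in V^H$ with $G_x=H$: if not, every $x\in V^H$ is fixed by some $g\notin H$, whence $V^H=\bigcup_{g\notin H}\bigl(V^H\cap\ker(\pi(g)-1)\bigr)$, a finite union of subspaces each \emph{proper} (since $g\notin H=G_{(V^H)}$ cannot fix $V^H$ pointwise), contradicting that a complex vector space is not a finite union of proper subspaces. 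I expect the only genuinely delicate point to be the extra hypothesis of Proposition~\ref{right}: checking that $b_H*\delta_g*b_H$ is a multiple of the double-coset idempotent $\mathbf{1}_{HgH}$ and that this is minimal in the bi-invariant algebra; once that is secured the first assertion is immediate, and the second is mechanical given Lemma~\ref{corrminstab} and the finite-union argument.
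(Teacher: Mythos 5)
Your proposal is correct and follows essentially the same route as the paper: both bullets are handled by realizing the subfactors as $\mathcal{P}(b_H,id)$ inside $(R\subseteq R\rtimes G)$ and $\mathcal{P}(e_1,b_H)$ inside $(R^G\subseteq R)$, identifying the $2$-box spaces with $\mathbb{C}^G$ and $\mathbb{C}G\cong\bigoplus_i \mathrm{End}(V_i)$, and applying Propositions~\ref{right} and~\ref{left} together with the double-coset computation and Lemma~\ref{corrminstab}. The only difference is that you spell out two points the paper leaves implicit --- the minimality of the double-coset idempotent $\mathbf{1}_{HgH}$ needed for the extra hypothesis of Proposition~\ref{right}, and the converse direction ``linearly primitive $\Rightarrow$ w-cyclic'' via the fact that $V^H$ is not a finite union of proper subspaces --- both of which are correct and strengthen the exposition.
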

\begin{proof}
By Proposition \ref{right}, $ (R \rtimes H \subseteq R \rtimes G) $ is w-cyclic if and only if $$ \exists u \in \mathcal{P}_{2,+}(R \subseteq R \rtimes G) \simeq \bigoplus_{g \in G} \mathbb{C}e_g \simeq \mathbb{C}^G $$ minimal projection such that $ \langle b , u \rangle = id $, with $ b=e^{R \rtimes G}_{R \rtimes H} $ and $ r_b^{-1}(b*u*b) $ is a minimal projection; if and only if $ \exists g \in G $ such that $ \langle H,g \rangle = G $, because $ u $ is of the form $ e_g $ and $ \forall g' \in HgH $, $ Hg'H = HgH $. 

 By Proposition \ref{left}, $ (R^G \subseteq R^H) $ is w-cyclic if and only if $$ \exists u \in \mathcal{P}_{2,+}(R^G \subseteq R) \simeq \bigoplus_{V_i \ irr.}\text{End}(V_i) \simeq \mathbb{C}G $$ minimal projection such that $ \langle u \rangle = e^R_{R^H} $; if and only if, by Lemma \ref{corrminstab}, $ H=G_x $ with $ u=p_x $ the projection on $ \mathbb{C}x \subseteq V_i $ (with $ Z(p_x) = p_{V_i} $). \\ 
Note that $ H \subset G_{(V_i^H)} \subset G_x $ so $ H = G_{(V_i^H)} $. \end{proof}
\begin{corollary} \label{wgrp2} The subfactor $ (R^G \subseteq R) $ (resp. $ (R \subseteq R \rtimes G) $) is w-cyclic if and only if $ G $ is linearly primitive (resp. cyclic). \end{corollary}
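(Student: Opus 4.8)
The plan is to derive this corollary as the special case $H=\{e\}$ of Theorem \ref{wgrp}, and then to rewrite the two interval-theoretic conditions appearing there as intrinsic conditions on $G$. So the whole argument is a specialization followed by an elementary translation; no new machinery is needed.

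First I would record the two identifications that let me recognize the subfactors in the statement as instances of those in Theorem \ref{wgrp}. Taking $H=\{e\}$, the fixed-point algebra under the trivial action is $R^{\{e\}}=R$, and the crossed product by the trivial group is $R\rtimes\{e\}=R$. Hence $(R^G\subseteq R^{\{e\}})$ is literally $(R^G\subseteq R)$, and $(R\rtimes\{e\}\subseteq R\rtimes G)$ is literally $(R\subseteq R\rtimes G)$. This is the only ``geometric'' input, and it is immediate.

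Next I would unwind the two conditions. For the group-von-Neumann-algebra side, Theorem \ref{wgrp} gives that $(R\subseteq R\rtimes G)$ is w-cyclic iff $[\{e\},G]$ is $\{e\}$-cyclic, i.e. (Definition \ref{Hcy}) iff there is $g\in G$ with $\langle\{e\},g\rangle=\langle g\rangle=G$, which is exactly the assertion that $G$ is cyclic. For the fixed-point side, Theorem \ref{wgrp} gives that $(R^G\subseteq R)$ is w-cyclic iff $[\{e\},G]$ is linearly primitive; by Definition \ref{linprim} this means there is an irreducible complex representation $V$ with $G_{(V^{\{e\}})}=\{e\}$. Since the trivial group fixes everything, $V^{\{e\}}=V$, so the condition reads $G_{(V)}=\{e\}$, i.e. $V$ is faithful, which by Definition \ref{linprimdef} is precisely linear primitivity of $G$ (this is the remark following Definition \ref{linprim}).

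I do not expect any genuine obstacle here: the result is a clean corollary, and the only points to check are the two algebra identifications $R^{\{e\}}=R$ and $R\rtimes\{e\}=R$ together with the routine evaluation of the ``$\{e\}$-cyclic'' and ``linearly primitive'' interval conditions at $H=\{e\}$. The one place to be slightly careful is the linear-primitivity translation, where one must use $V^{\{e\}}=V$ to pass from $G_{(V^{\{e\}})}=\{e\}$ to faithfulness of $V$; this is exactly the equivalence already noted just after Definition \ref{linprim}. The parenthetical contrast with cyclicity (e.g. $G=S_3$ is linearly primitive but not cyclic, so the two halves of the corollary are genuinely different conditions) requires no proof and only illustrates that ``w-cyclic'' is strictly weaker than ``cyclic'' on the fixed-point side.
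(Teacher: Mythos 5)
Your proposal is correct and is exactly the argument the paper intends: the corollary is the specialization $H=\{e\}$ of Theorem \ref{wgrp}, using $R^{\{e\}}=R$, $R\rtimes\{e\}=R$, the identification of $\{e\}$-cyclicity of $[\{e\},G]$ with cyclicity of $G$, and the remark following Definition \ref{linprim} that $G$ is linearly primitive iff $[\{e\},G]$ is. The paper treats this as immediate (it gives no separate proof), and your careful unwinding of the two interval conditions matches that reading precisely.
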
 
\begin{examples} The subfactors $ (R^{S_4} \subset R^{S_2}) $, its dual and $ (R^{S_3} \subset R) $,  are w-cyclic, but $ (R \subset R \rtimes S_3) $ and $ (R^{S_4} \subset R^{\langle (1,2)(3,4) \rangle}) $ are not. \end{examples} 
\noindent By Theorem \ref{wgrp}, the group-theoretic reformulation of Conjecture \ref{conjext} on $ (R^G \subseteq R^H) $ is the following dual version of Theorem \ref{ore2}.
\begin{conjecture} \label{statgroup}
Let $ [H,G] $ be a distributive interval of finite groups. Then $ \exists V $ irreducible complex representation of $ G $ such that $ G_{(V^H)} = H $.
\end{conjecture}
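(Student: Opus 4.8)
The plan is to prove Conjecture~\ref{statgroup} by transporting it to the planar-algebra side, where Conjecture~\ref{conjext} lives, through the dictionary of Theorem~\ref{wgrp}. Given a distributive interval $[H,G]$, I would form the fixed-point subfactor $(R^G \subseteq R^H)$. By Galois correspondence its intermediate subfactors are exactly the $R^K$ for $K \in [H,G]$, and since $R^{K_1} \subseteq R^{K_2} \iff K_2 \subseteq K_1$, the assignment $K \mapsto R^K$ is an order-reversing bijection; thus the intermediate lattice is the reversal of $[H,G]$. By Theorem~\ref{bisch} this is precisely the biprojection lattice $[e_1,id]$ of $\mathcal{P}(R^G \subseteq R^H)$, and as distributivity is reversal-invariant (Remark~\ref{distable}), this biprojection lattice is distributive.

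Assuming Conjecture~\ref{conjext}, the planar algebra $\mathcal{P}(R^G \subseteq R^H)$ is then w-cyclic. By the second bullet of Theorem~\ref{wgrp}, w-cyclicity of $(R^G \subseteq R^H)$ is equivalent to linear primitivity of the interval $[H,G]$ in the sense of Definition~\ref{linprim}, i.e.\ to the existence of an irreducible complex representation $V$ of $G$ with $G_{(V^H)} = H$. This is exactly the sought conclusion, so the implication follows at once from the two-line reduction above.

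The main obstacle is that this argument is only conditional: Conjecture~\ref{conjext} is open in general, and is established unconditionally only when the biprojection lattice has fewer than $32$ elements (Corollary~\ref{32}, via the boolean analysis of Corollary~\ref{B4w}). Hence the reduction yields an \emph{unconditional} proof of Conjecture~\ref{statgroup} precisely when $|[H,G]| < 32$. To remove the conditionality one must settle Conjecture~\ref{conjext} itself, that is, push the boolean $\mathcal{B}_n$ analysis beyond rank $4$ or treat the distributive case directly.

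A purely group-theoretic attack would try to mirror Ore's own proof of Theorem~\ref{ore2}. One would first reduce to the top (boolean) interval; here the delicate point, absent on the group side of Theorem~\ref{ore2}, is that $V \mapsto V^H$ is order-reversing, so the clean top-reduction step of that proof must be re-derived for fixed-point spaces. Then, in the boolean case, the role of the group product $g=ab$ would be played by choosing an irreducible constituent $Z$ of the tensor product $V \otimes W$, where $V,W$ realize the two complementary coatoms (the coproduct $*$ on $\mathcal{P}_{2,+}(R^G \subseteq R) \cong \mathbb{C}G$ being the representation-theoretic analog of tensoring). The hard part is exactly the step that fails in Theorem~\ref{centheo} once centrality is dropped: there the inequality $u \le Z(u') \le \langle u',c,v\rangle$ relies on every biprojection being central, whereas for $(R^G \subseteq R^H)$ the biprojection $e^{R^H}_{R^K}$ is central iff $K \trianglelefteq G$, which distributivity alone does not guarantee. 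I expect the genuine difficulty to lie in controlling the pointwise stabilizer $G_{(Z^H)}$ of the fixed space of a tensor-product constituent without any normality assumption.
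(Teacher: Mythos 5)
Your proposal is correct and follows exactly the paper's own route: the statement is presented there as the group-theoretic reformulation of Conjecture~\ref{conjext} for $\mathcal{P}(R^G \subseteq R^H)$ via the dictionary of Theorem~\ref{wgrp} (with Galois correspondence and reversal-invariance of distributivity supplying the lattice identification), hence conditional in general and unconditional only when the interval has fewer than $32$ elements by Corollary~\ref{32}. Your closing diagnosis of where the unconditional argument breaks down --- the centrality hypothesis in Theorem~\ref{centheo} failing for $e^{R^H}_{R^K}$ when $K$ is not normal --- also matches the paper's stated reason for leaving this as a conjecture.
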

\noindent If moreover $ H $ is core-free, then $ G $ is linearly primitive (Lemma \ref{linprimgrp}).
\begin{problem} Is a finite group $ G $ linearly primitive iff there is a core-free subgroup $ H $ such that the interval $ [H,G] $ is bottom boolean? 
\end{problem}
\noindent By Theorem \ref{faith}, Conjecture \ref{statement} on $ \mathcal{P}(R^G \subseteq R) $ reformulates as follows:
\begin{conjecture} \label{statdualgroup}
The minimal number of irreducible components for a faithful complex representation of a finite group $ G $ is less than the minimal length $ \ell $ for an ordered chain of subgroups $$ \{e\}=H_0 < H_1 < \dots < H_{\ell} = G $$ such that $ [H_i,H_{i+1}] $ is distributive (or better, bottom boolean).
\end{conjecture}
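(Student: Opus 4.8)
The plan is to take Conjecture \ref{statement}, which we may assume, specialize it to the planar algebra $\mathcal{P}=\mathcal{P}(R^G\subseteq R)$, and translate each side of the inequality into the language of subgroups and representations of $G$; the statement then becomes verbatim Conjecture \ref{statdualgroup}. So the work is essentially a dictionary built on the identification $\mathcal{P}_{2,+}(R^G\subseteq R)\simeq\bigoplus_{V_i\text{ irr.}}\mathrm{End}(V_i)\simeq\mathbb{C}G$ already used in Theorem \ref{wgrp}, plus one genuinely new genericity argument on the left-hand side.

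First I would handle the right-hand side. By Theorem \ref{bisch} the biprojection lattice $[e_1,id]$ is $\{e^R_{R^H}\mid H\le G\}$, and $H\mapsto e^R_{R^H}=b_H$ is an order-reversing bijection onto $\mathcal{L}(G)$. Hence a chain $e_1=b_0<\dots<b_\ell=id$ of biprojections corresponds, after reversal, to a chain of subgroups $\{e\}=H_0<\dots<H_\ell=G$ of the same length, each biprojection interval $[b_i,b_{i+1}]$ being the reversal of a subgroup interval $[H_j,H_{j+1}]$. Since distributivity is reversal-invariant (Remark \ref{distable}) and a top-boolean lattice reverses to a bottom-boolean one, the condition ``distributive (or top boolean)'' on biprojection intervals matches exactly ``distributive (or bottom boolean)'' on subgroup intervals, so the two minimal lengths $\ell$ agree; this is precisely the right-hand side of Conjecture \ref{statdualgroup}.

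Next I would handle the left-hand side, which is the crux. A minimal projection $u\in\mathcal{P}_{2,+}$ is a rank-one projection $p_x$ onto a line $\mathbb{C}x$ inside a single irreducible $V_i$, and by Lemma \ref{corrminstab} together with Theorem \ref{bisch} one gets $\langle p_x\rangle=b_{G_x}=e^R_{R^{G_x}}$, since $b_{G_x}$ is the smallest biprojection dominating $p_x$. As $\langle u_1,\dots,u_r\rangle$ is by Definition \ref{gener} the join $\bigvee_k\langle u_k\rangle$, and joins of biprojections correspond to intersections of subgroups under the order-reversing dictionary, I would deduce
$$ \langle p_{x_1},\dots,p_{x_r}\rangle=b_{\bigcap_k G_{x_k}}, $$
so that $\langle p_{x_1},\dots,p_{x_r}\rangle=id$ if and only if $\bigcap_k G_{x_k}=\{e\}$. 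It then remains to show that the minimal number $r$ of lines $x_k$ (each in some irreducible) with trivial common stabilizer equals the minimal number of irreducible components of a faithful representation of $G$; Theorem \ref{faith} is what certifies that ``faithful'' is the representation-theoretic avatar of ``generates $id$'' (every irreducible occurs in a tensor power $\Leftrightarrow$ generation under the coproduct).

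The main obstacle, and the only place a real argument is needed rather than bookkeeping, is this last equivalence, because the stabilizer $G_x$ of a single line is in general strictly larger than $\ker\pi_{V_i}$, so a priori ``trivial common stabilizer'' looks stronger than ``faithful''. One direction is immediate: if $\bigcap_k G_{x_k}=\{e\}$ with $x_k\in V_{i_k}$, then $\bigcap_k\ker\pi_{V_{i_k}}\subseteq\bigcap_k G_{x_k}=\{e\}$, so the distinct $V_{i_k}$ already form a faithful representation with at most $r$ components. For the converse I would fix a faithful $V=\bigoplus_{k=1}^r V_{i_k}$ realizing the minimum and seek a single vector $w=(x_1,\dots,x_r)$, all components nonzero, with $G_w=\bigcap_k G_{x_k}=\{e\}$. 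The bad vectors lie in $\bigcup_{g\neq e}V^{\langle g\rangle}$ together with the subspaces where some component vanishes, a finite union of \emph{proper} subspaces of $V$ (proper precisely because faithfulness gives $\pi_V(g)\neq I$ for $g\neq e$); over the infinite field $\mathbb{C}$ such a union cannot exhaust $V$, so a good $w$ exists and yields $r$ minimal projections $p_{x_k}$ generating $id$. This shows the two minima coincide, completing the translation and hence the reduction of Conjecture \ref{statement} on $\mathcal{P}(R^G\subseteq R)$ to Conjecture \ref{statdualgroup}.
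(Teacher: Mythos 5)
Your proposal is correct, and it establishes exactly what the paper claims at this point: namely that Conjecture \ref{statdualgroup} is the group-theoretic reformulation of Conjecture \ref{statement} applied to $\mathcal{P}(R^G\subseteq R)$ (neither statement is proved outright; both remain conjectural, so ``assuming Conjecture \ref{statement}'' is the right logical stance). However, your route to the crucial identification of the left-hand sides is genuinely different from the paper's. The paper's translation is routed through Burnside's Theorem \ref{faith}: on minimal \emph{central} projections $p_{V_i}$ the coproduct implements fusion of representations, so $\langle p_{V_{i_1}},\dots,p_{V_{i_r}}\rangle=id$ amounts to every irreducible occurring in some tensor power of $\bigoplus_k V_{i_k}$, i.e.\ (Theorem \ref{faith}) to faithfulness, with Theorem \ref{mini} and central supports bridging between central and arbitrary minimal projections. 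You instead work directly with rank-one projections $p_{x_k}$, compute $\langle p_{x_1},\dots,p_{x_r}\rangle=b_{\bigcap_k G_{x_k}}$ from Lemma \ref{corrminstab} and the order-reversing Galois dictionary, and reduce everything to a purely linear-algebraic claim: a faithful $V=\bigoplus_{k=1}^r V_{i_k}$ contains a vector with all components nonzero and trivial stabilizer, because the bad set $\bigcup_{g\neq e}V^{\langle g\rangle}\cup\bigcup_k\{w \mid w_k=0\}$ is a finite union of proper subspaces and hence cannot exhaust a $\mathbb{C}$-vector space. Note that your citation of Theorem \ref{faith} is in fact decorative --- your argument never touches tensor powers --- and your genericity step is in effect a group-case substitute for Theorem \ref{mini}. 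What each approach buys: yours is more elementary and self-contained, and it matches the literal wording of Conjecture \ref{statement} (minimal projections rather than minimal central ones); the paper's route via Burnside makes the conceptual point that coproduct generation \emph{is} tensor-product fusion, which is the formulation that survives beyond group subfactors (e.g.\ for Kac algebras, cf.\ the paper's final remark). One small attribution slip: identifying the biprojection lattice of $\mathcal{P}(R^G\subseteq R)$ with the reversed subgroup lattice requires the Galois correspondence \cite{nk} in addition to Theorem \ref{bisch}, which by itself only equates biprojections with Jones projections of intermediate subfactors.
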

\noindent This provides a bridge linking combinatorics and representations in the theory of finite groups.
\begin{remark} We can upgrade Conjecture \ref{statdualgroup} by taking for $ H_0 $ any core-free subgroup of $H_1$, instead of just $\{e\}$; we can also deduce theorems, by adding some assumptions to $ [H_i, H_{i+1}] $, according to the group-theoretic reformulation of Theorems \ref{thm} or \ref{le2}. Note that a normal biprojection in $ \mathcal{P}(R^G \subseteq R^H) $ is given by a subgroup $ K \in [H,G] $ with $ HgK=KgH \ \forall g \in G $, see \cite[Proposition 3.3]{teru}. \end{remark}
\begin{remark} We can also formulate results for finite quantum groups (i.e. finite dimensional Kac algebras), where the biprojections correspond to the left coideal $ \star $-subalgebras, see \cite[Theorem 4.4]{ilp}. \end{remark}
\section{Acknowledgments} 
This work is supported by the Institute of Mathematical Sciences, Chennai. The author is grateful to Vaughan Jones, Dietmar Bisch, Scott Morrison and David Evans for their recommendation for a postdoc at the IMSc. Thanks to my hosts V.S. Sunder and Vijay Kodiyalam, and to Zhengwei Liu, Feng Xu, Keshab Chandra Bakshi and Mamta Balodi, for useful advices and fruitful exchanges.
\begin{bibdiv}
\begin{biblist}
\bib{amp}{article}{ 
 author={Afzaly, Narjess},
 author={Morrison, Scott},
 author={Penneys, David},
 title={The classification of subfactors with index at most $ 5 \frac{1}{4} $ },
 note={arXiv:1611.05811},
}
\bib{asha}{article}{
 author={Asaeda, M.},
 author={Haagerup, U.},
 title={Exotic subfactors of finite depth with Jones indices
 $ (5+\sqrt{13})/2 $ and $ (5+\sqrt{17})/2 $ },
 journal={Comm. Math. Phys.},
 volume={202},
 date={1999},
 number={1},
 pages={1--63},
 issn={0010-3616},
 review={\MR{1686551 (2000c:46120)}},
 doi={10.1007/s002200050574},
}
\bib{kcb}{article}{
 author={Bakshi, Keshab},
 title={Intermediate planar algebra revisited},
 pages={31pp},
 note={arXiv:1611.05811},
}
\bib{bi}{article}{
 author={Bisch, Dietmar},
 title={A note on intermediate subfactors},
 journal={Pacific J. Math.},
 volume={163},
 date={1994},
 number={2},
 pages={201--216},
 issn={0030-8730},
 review={\MR{1262294 (95c:46105)}},
 doi={10.2140/pjm.1994.163.201}
}
\bib{bu}{book}{
   author={Burnside, William},
   title = {Theory of groups of finite order. Second edition},
   publisher={Cambridge University Press},
   date={1911},
   pages={xxiv+512},
}
\bib{izha}{article}{
 author={Izumi, Masaki},
 title={The structure of sectors associated with Longo-Rehren inclusions.
 II. Examples},
 journal={Rev. Math. Phys.},
 volume={13},
 date={2001},
 number={5},
 pages={603--674},
 issn={0129-055X},
 review={\MR{1832764 (2002k:46161)}},
 doi={10.1142/S0129055X01000818},
}
\bib{iz}{article}{
 author={Izumi, Masaki},
 title={Characterization of isomorphic group-subgroup subfactors},
 journal={Int. Math. Res. Not.},
 date={2002},
 number={34},
 pages={1791--1803},
 issn={1073-7928},
 review={\MR{1920326 (2003f:46100)}},
 doi={10.1155/S107379280220402X},
}
\bib{ilp}{article}{
 author={Izumi, Masaki},
 author={Longo, Roberto},
 author={Popa, Sorin},
 title={A Galois correspondence for compact groups of automorphisms of von
 Neumann algebras with a generalization to Kac algebras},
 journal={J. Funct. Anal.},
 volume={155},
 date={1998},
 number={1},
 pages={25--63},
 issn={0022-1236},
 review={\MR{1622812 (2000c:46117)}},
 doi={10.1006/jfan.1997.3228},
}
\bib{jo}{article}{
 author={Jones, Vaughan F. R.},
 title={Actions of finite groups on the hyperfinite type $ {\rm II}_{1} $ \
 factor},
 journal={Mem. Amer. Math. Soc.},
 volume={28},
 date={1980},
 number={237},
 pages={v+70},
 issn={0065-9266},
 review={\MR{587749 (81m:46094)}},
 doi={10.1090/memo/0237},
}
\bib{jo2}{article}{
 author={Jones, Vaughan F. R.},
 title={Index for subfactors},
 journal={Invent. Math.},
 volume={72},
 date={1983},
 number={1},
 pages={1--25},
 issn={0020-9910},
 review={\MR{696688 (84d:46097)}},
 doi={10.1007/BF01389127},
}
\bib{js}{book}{
 author={Jones, Vaughan F. R.},
 author={Sunder, V. S.},
 title={Introduction to subfactors},
 series={London Mathematical Society Lecture Note Series},
 volume={234},
 publisher={Cambridge University Press, Cambridge},
 date={1997},
 pages={xii+162},
 isbn={0-521-58420-5},
 review={\MR{1473221 (98h:46067)}},
 doi={10.1017/CBO9780511566219},
}
\bib{jo4}{article}{
 author={Jones, Vaughan F. R.},
 title={Planar algebras, I},
 date={1999},
 pages={122pp},
 journal={arXiv:math/9909027}, 
 note={to appear in New Zealand Journal of Mathematics},
}
\bib{jms}{article}{
 author={Jones, Vaughan F. R.},
 author={Morrison, Scott},
 author={Snyder, Noah},
 title={The classification of subfactors of index at most 5},
 journal={Bull. Amer. Math. Soc. (N.S.)},
 volume={51},
 date={2014},
 number={2},
 pages={277--327},
 issn={0273-0979},
 review={\MR{3166042}},
 doi={10.1090/S0273-0979-2013-01442-3},
}
\bib{sk}{article}{
 author={Kodiyalam, Vijay},
 author={Sunder, V. S.},
 title={The subgroup-subfactor},
 journal={Math. Scand.},
 volume={86},
 date={2000},
 number={1},
 pages={45--74},
 issn={0025-5521},
 review={\MR{1738515 (2001b:46103)}},
}
\bib{sk2}{article}{
 author={Kodiyalam, Vijay},
 author={Sunder, V. S.},
 title={On Jones' planar algebras},
 journal={J. Knot Theory Ramifications},
 volume={13},
 date={2004},
 number={2},
 pages={219--247},
 issn={0218-2165},
 review={\MR{2047470 (2005e:46119)}},
 doi={10.1142/S021821650400310X},
}
\bib{la0}{book}{
 author={Landau, Zeph A.},
 title={Intermediate subfactors},
 note={Thesis (Ph.D.)--University of California at Berkeley},
 date={1998},
 pages={132},
}
\bib{la}{article}{
 author={Landau, Zeph A.},
 title={Exchange relation planar algebras},
 booktitle={Proceedings of the Conference on Geometric and Combinatorial
 Group Theory, Part II (Haifa, 2000)},
 journal={Geom. Dedicata},
 volume={95},
 date={2002},
 pages={183--214},
 issn={0046-5755},
 review={\MR{1950890 (2003k:46091)}},
 doi={10.1023/A:1021296230310},
}
\bib{li}{article}{
 author={Liu, Zhengwei},
 title={Exchange relation planar algebras of small rank},
 journal={Trans. Amer. Math. Soc.},
 volume={368},
 date={2016},
 number={12},
 pages={8303--8348},
 issn={0002-9947},
 review={\MR{3551573}},
 doi={10.1090/tran/6582},
}
\bib{nk}{article}{
 author={Nakamura, Masahiro},
 author={Takeda, Zir{\^o}},
 title={On the fundamental theorem of the Galois theory for finite
 factors. },
 journal={Proc. Japan Acad.},
 volume={36},
 date={1960},
 pages={313--318},
 issn={0021-4280},
 review={\MR{0123926 (23 \#A1247)}},
}
\bib{or}{article}{
 author={Ore, \O ystein},
 title={Structures and group theory. II},
 journal={Duke Math. J.},
 volume={4},
 date={1938},
 number={2},
 pages={247--269},
 issn={0012-7094},
 review={\MR{1546048}},
 doi={10.1215/S0012-7094-38-00419-3},
}
\bib{pa210}{misc}{ 
 title={Non-``weakly group theoretical" integral fusion categories?}, 
 author={Sebastien Palcoux}, 
 note={http://mathoverflow.net/q/132866 (version: 2016-07-26)}, 
 eprint={http://mathoverflow.net/q/132866}, 
 organization={MathOverflow} 
}
\bib{pa}{article}{
 author={Palcoux, Sebastien},
 title={Ore's theorem for cyclic subfactor planar algebras and applications},
 pages={50pp},
 journal={arXiv:1505.06649v10}
}
\bib{ep}{article}{
 author={Peters, Emily},
 title={A planar algebra construction of the Haagerup subfactor},
 journal={Internat. J. Math.},
 volume={21},
 date={2010},
 number={8},
 pages={987--1045},
 issn={0129-167X},
 review={\MR{2679382 (2011i:46077)}},
 doi={10.1142/S0129167X10006380},
}
\bib{sta}{book}{
 author={Stanley, Richard P.},
 title={Enumerative combinatorics. Volume 1},
 series={Cambridge Studies in Advanced Mathematics},
 volume={49},
 edition={2},
 publisher={Cambridge University Press, Cambridge},
 date={2012},
 pages={xiv+626},
 isbn={978-1-107-60262-5},
 review={\MR{2868112}}
}
\bib{teru}{article}{
 author={Teruya, Tamotsu},
 title={normal intermediate subfactors},
 journal={J. Math. Soc. Japan},
 volume={50},
 date={1998},
 number={2},
 pages={469--490},
 issn={0025-5645},
 review={\MR{1613172 (99e:46080)}},
 doi={10.2969/jmsj/05020469},
}
\bib{wa}{article}{
 author={Watatani, Yasuo},
 title={Lattices of intermediate subfactors},
 journal={J. Funct. Anal.},
 volume={140},
 date={1996},
 number={2},
 pages={312--334},
 issn={0022-1236},
 review={\MR{1409040 (98c:46134)}},
 doi={10.1006/jfan.1996.0110},
}
\bib{xu}{article}{
 author={Xu, Feng},
 title={On a problem about tensor products of subfactors},
 journal={Adv. Math.},
 volume={246},
 date={2013},
 pages={128--143},
 issn={0001-8708},
 review={\MR{3091803}},
 doi={10.1016/j.aim.2013.06.026},
}
\end{biblist}
\end{bibdiv}
\end{document}